\documentclass[11pt]{article}
\usepackage{bbm}
\usepackage{mathrsfs}
\usepackage{amsfonts}
\usepackage{amssymb}
\usepackage{amssymb,amsmath,amsthm, amsfonts}
\textwidth=16cm \textheight=23cm
\parindent=1cm
\oddsidemargin=-5mm \evensidemargin=-5mm \topmargin=-10mm
\baselineskip=20pt

\oddsidemargin  = 0pt \evensidemargin = 0pt \marginparwidth = 1in
\marginparsep   = 0pt \leftmargin     = 1.25in \topmargin =0pt
\headheight     = 0pt \headsep        = 0pt \topskip =0pt
\footskip       =0.25in \textheight     = 9in \textwidth      =
6.5in

\setcounter{MaxMatrixCols}{30}
\providecommand{\U}[1]{\protect \rule{.1in}{.1in}}
\newtheorem{theorem}{Theorem}[section]

\newtheorem{corollary}{Corollary}[section]

\newtheorem{definition}{Definition}[section]

\newtheorem{lemma}{Lemma}[section]

\newtheorem{proposition}{Proposition}[section]
\newtheorem{remark}{Remark}[section]
\newtheorem{example}{Example}[section]

\def\essinf{\mathop{\rm essinf}}
\def\sup{\mathop{\rm sup}}
\makeatletter
   
   \@addtoreset{equation}{section}
\makeatother

\begin{document}

\title{Representation of asymptotic values for nonexpansive
stochastic control systems \thanks{The work has been supported in part by the NSF of P.R.China (No. 11222110), Shandong Province (No. JQ201202), NSFC-RS (No. 11661130148), 111 Project (No. B12023).}}
\author{Juan Li,\,\, Nana Zhao\footnote{Corresponding author}\\
{\small School of Mathematics and Statistics, Shandong University, Weihai, Weihai 264209, P.~R.~China.}\\
{\small{\it E-mails: juanli@sdu.edu.cn, nnz0528@163.com.}}
\date{August 02, 2017}}
\maketitle
\begin{abstract}
In ergodic stochastic problems the limit of the value function $V_\lambda$ of the associated discounted cost functional with infinite time horizon is studied, when the discounted factor $\lambda$ tends to zero. These problems have been well studied in the literature and the used assumptions guarantee that the value function $\lambda V _\lambda$ converges uniformly to a constant as $\lambda\to 0$. The objective of this work consists in studying these problems under assumptions, namely, the nonexpansivity assumption, under which the limit function is not necessarily constant. Our discussion goes beyond the case of the stochastic control problem with infinite time horizon and discusses also $V_\lambda$ given by a Hamilton-Jacobi-Bellman equation of second order which is not necessarily associated with a stochastic control problem. On the other hand, the stochastic control case generalizes considerably earlier works by considering cost functionals defined through a backward stochastic differential equation with infinite time horizon and we give an explicit representation formula for the limit of $\lambda V_\lambda$, as $\lambda\to 0$.
\bigskip
\end{abstract}

\noindent \textbf{Keywords.}
Stochastic nonexpansivity condition; limit value; BSDE.

\noindent \textbf{AMS Subject classification:} 60H10; 60K35

\section{{\protect \large {Introduction}}}
In our paper we study the limit behaviour of the optimal value of a discounted cost functional with infinite time horizon as the discount factor $\lambda>0$ tends to zero. For this we consider a stochastic control system given by the controlled stochastic equation
\begin{equation}\label{0}
dX_t^{x,u}=b(X_t^{u,x},u_t)dt+\sigma(X_t^{x,u},u_t)dW_t,\, t\ge 0,\ \ X_0^{x,u}=x\in \mathbb{R}^N,
\end{equation}

\noindent driven by a Brownian motion $W$ and an admissible control $u\in{\cal U}$, i.e. a control process $u$ which is adapted with respect to the filtration $\mathbf{F}=({\cal F}_t)_{t\ge 0}$ generated by $W$ and completed by all null sets. As we are interested in the limit behaviour of the controlled system, as $t\rightarrow +\infty$, we have to add to the usual Lipschitz and growth conditions on the coefficients $\sigma$ and $b$ also assumptions guaranteeing that, for all the process $X^{t,u}$ takes all its values in a compact $\overline{\theta}(\subset R^N)$, for all $x\in\overline{\theta}$ and all $u\in{\cal U}$. The cost functional $\overline{Y}_0^{x,u}$ associated with the dynamics $X^{x,u}$ is defined through a backward stochastic differential equation (BSDE) on the infinite time interval $[0,+\infty)$:

\begin{equation}\label{1} \overline{Y}^{\lambda,x,u}_t=\overline{Y}^{\lambda,x,u}_T+\int_t^T(\psi(X_s^{x,u}, \overline{Z}^{\lambda,x,u}_s,u_s)-\lambda \overline{Y}^{\lambda,x,u}_s)ds-\int_t^T\overline{Z}^{\lambda,x,u}_sdW_s,\, 0\le t\le T<+\infty,\end{equation}

\noindent and we define the value function
\begin{equation}\label{3} V_\lambda(x):=\inf_{u\in{\cal U}}\overline{Y}_0^{x,u}.\end{equation}

We remark that, if $\psi(x,z,u)$ doesn't depend on $z$, we get the cost functional considered in \cite{Buckdahn 2013}:
\smallskip
\begin{equation}\label{2} \overline{Y}_0^{x,u}=E\left[\int_0^\infty e^{-\lambda t}\psi(X_t^{x,u},u_t)dt\right].\end{equation}

\smallskip

\noindent However, since the pioneering work by Pardoux and Peng \cite{peng1990} on BSDEs in 1990 and its extension by Darling, Pardoux \cite{Darling 1997} and by Peng \cite{S. Peng 1991}, and in particular since the works by Peng \cite{P 1992}, \cite{Peng 1997} on BSDE methods in stochastic control, it has become usual to study stochastic control systems whose cost functionals are defined through a BSDE. As concerns BSDEs with infinite time horizon, Chen \cite{Chen 1992} was the first to study such equations on an unbounded random time interval, Hamad\`{e}ne, Lepeltier and Wu \cite{Wu 1999} studied reflected BSDEs with one reflecting barrier and with infinite time horizon. Moreover, Briand, Hu \cite{Hu 1998} and Royer \cite{Royer 2004} generalized the existence results for BSDEs with unbounded random terminal time.

In our paper we begin our studies with the above infinite terminal time BSDE (\ref{1}), where we use techniques developed by Debusche, Hu and Tessitore \cite{Debussche 2011}, and we provide new estimates. Let us point out that in \cite{Debussche 2011} the authors have studied Ergodic BSDEs first introduced by Fuhrman, Hu and Tessotore \cite{Fuhrman 2009}; their $\lambda$ is a part of the solution. Our BSDE differs from theirs, its driving coefficient $\psi$ depends also on the control process, and its study differs, since we are not interested in the ergodic case, we study the limit behaviour of the value function $\lambda V_\lambda$ as $\lambda\rightarrow 0$ under assumptions which don't imply that the limit value function is a constant.

The limit problem for deterministic and stochastic control systems has been studied by different authors. Quincampoix and Renault \cite{Quincampoix 2011} studied a deterministic control problem with infinite time horizon and investigated the limit behaviour of the discounted value value function, when the discount factor tends to zero. For this they used a so-called nonexpansivity condition, and they gave, in particular, examples which show that -unlike the ergodic case- the limit value function can depend on the initial state $x$. In Buckdahn, Goreac and Quincampoix \cite{Buckdahn 2013} these studies are extended to stochastic control problems with value functions of the form (\ref{2}) (Abel mean) but also of Ces\'{a}ro mean. In \cite{Marc 2015}, for the case of deterministic controls, Cannarsa and Quincampoix extend these approaches by using a measurable viability theorem of Frankowska, Plaskacz, Rzezuchowski \cite{Frankowska 1995}, they characterize $V_\lambda$ as constrained viscosity solution of an associated Hamilton-Jacobi equation, and they study the limit problem.

The studies in our paper are heavily inspired by \cite{Buckdahn 2013} and \cite{Frankowska 1995}. The key assumption in \cite{Buckdahn 2013}, which allows to take the limit of the classical value function $\lambda V_\lambda$ (see (\ref{3})) as $\lambda\rightarrow 0$ is the nonexpansivity condition. However, as we generalize the cost functional by defining it through an infinite time horizon BSDE, we have also to extend our nonexpansivity assumption to the more general case we investigate (see our assumption (\ref{r49}) in Section 2). This extension is non trivial, it gives a stability to this assumption under Girsanov transformation which we have to work with, but however our condition coincides with that given in \cite{Buckdahn 2013}, if $\psi$ is independent of $z$. Under our nonexpansivity condition we show that the family of functions $\{\lambda V_\lambda\}$ is equi-continuous and equi-bounded on $\overline{\theta}$. Hence, due to the Arzel\`{a}-Ascoli Theorem, as $\lambda \rightarrow 0$, $\{\lambda V_\lambda\}$ has an accumulation point in the space of continuous functions over $\bar{\theta}$ endowed with the supremum norm.

The main objective of our paper is to get the existence of the limit, i.e., the uniqueness of this accumulation point, and to characterize the limit function $w_0=\lim_{\lambda\rightarrow 0}\lambda {V}_\lambda$. In our approach PDE methods play a central role. We recall that the PDE approach for the study of the limit behaviour for solutions of Hamilton-Jacobi equations with coercitive Hamiltonian essentially originates from Lions, Papanicolaou and Varadhan \cite{P.-L. Lions}. This work was extended by Arisawa \cite{M. Arisawa 1998} for the deterministic control setting and by Arisawa and Lions \cite{Arisawa Lions 1998} to the stochastic control framework. For subsequent works and extensions the reader is referred to \cite{Artstein 2000}, \cite{Quincampoix 2011} for the deterministic control case, and to \cite{G. K. Basak 1997}, \cite{V. Borkar 2007}, \cite{R. Buckdahn 2005}, \cite{A. Richou 2009} and the references therein for the stochastic framework. But all these approaches were made in the ergodic case, under suitable assumptions guaranteeing that the limit value is independent of the initial data.

In our paper we too use a PDE approach. For this end we characterize $V_\lambda$ as constrained viscosity solution of the associated Hamilton-Jabobi-Bellman (HJB) equation

\smallskip

\centerline{$\lambda V_\lambda(x)+H(x,DV_\lambda(x),D^2V_\lambda(x))=0,\, \, x\in\theta,$}

\centerline{$\lambda V_\lambda(x)+H(x,DV_\lambda(x),D^2V_\lambda(x))\ge 0,\, x\in\partial\theta,$}

\noindent (see Section 3).  Avoiding assumptions which lead to the ergodic case, we suppose that the Hamiltonian $H$ satisfies a radial monotonicity condition

$$H(x,lp,lA)\le H(x,p,A),\, l\ge 1,\, (p,A)\in \mathbb{R}^N\times {\cal S}^N,$$

\noindent where ${\cal S}^N$ denotes the set of symmetric $N\times N$ matrices. This condition was introduced in \cite{Marc 2015}, and it guarantees the monotone and uniform convergence of $\lambda V_\lambda$, as $\lambda\rightarrow 0.$
As this convergence result for the constrained solution $V_\lambda$ of the above HJB equation is not directly related with the characterization of $V_\lambda$ as value function of our stochastic control problem, by using Katsoulakis' comparison results \cite{Katsoulakis 1994} for constrained solutions of PDEs, we extend our discussion to more general Hamiltonians which are not necessarily related with a stochastic control problem, but which satisfy the radial monotonicity condition. For this general case we characterize the limit $w_0=\lim_{\lambda\rightarrow 0}\lambda V_\lambda$ as maximal viscosity subsolution of some limit HJB equation (Theorem \ref{the:3.4}). More precisely, we prove that
$$w_0(x)=\sup\{w(x):\, w\in \mbox{Lip}_{M_0}(\overline{\theta}),\, w+\overline{H}(x,Dw,D^2w)\le 0\mbox{ on }\theta \mbox{ in viscosity sense}\}$$

\noindent $x\in\overline{\theta},$ where $\overline{H}(x,p,A)=\min\left\{M_0,\sup_{l>0}H(x,lp,lA)\right\}$ (For details, see Theorem \ref{the:3.4}).

\noindent After, coming back to the special case that $V_\lambda$ is the value function of our stochastic control problem, we characterize the limit function $w_0=\lim_{\lambda\rightarrow 0}\lambda V_\lambda$ as viscosity solution by passing to the limit in the HJB equation associated with $V_\lambda$. For the special case $\psi(x,z,u)=\psi_1(x,u)+g(z)$ we give an explicit representation of $w_0$ (see Theorem \ref{th:4.2})  using Peng's notion of $g$-expectation $\varepsilon^g[\cdot]$ (\cite{peng}); it's a non linear expectation introduced through a BSDE with driving coefficient $g$. More precisely, we show that
$$ w_0(x)=\inf_{t\ge 0,u\in{\cal U}}\varepsilon^g [\min_{v\in U}\psi(X_t^{x,u},0,v) ],\, x\in \overline{\theta}.$$

Our paper is organized as follows. In Section 2 we present the basic assumptions on the coefficient functions $b, \sigma, \psi$, we define the value function $V_\lambda(x)$, and we prove the existence and the uniqueness of the solution of the BSDEs on the infinite time interval $[0,\infty)$ (Proposition \ref{th:2.4}). We introduce the stochastic nonexpansivity condition and show that the nonexpansivity condition combined with standard assumptions implies the stochastic nonexpansivity condition (Proposition \ref{p:2.1}). A consequence is that the family of functions $\{\lambda V_\lambda\}_{\lambda>0}$ is equicontinuous and equibounded on $\overline{\theta}$ (Lemma \ref{lem:2.6}). In Section 3 we first define the constrained viscosity solution of general HJB equations which are not necessarily related with a stochastic control problem, and then we show in this general framework that $\lambda V_\lambda$ is monotone and converges uniformly to some limit $w_0$ as $\lambda\rightarrow 0$ (Theorem \ref{th:3.3}). Moreover, we give an explicit representation of $w_0(x)$ (Theorem \ref{the:3.4}). In Section 4 we consider the Hamiltonian $H$ related to the stochastic control problem, and we characterize $V_\lambda$ as the unique viscosity solution on $\overline{\theta}$ of the associated HJB equation (Proposition \ref{th:3.3.1} and Proposition \ref{th:3.2}). For the convenience of the reader, we give the proof of the dynamic programming principle (DPP) in the Appendix. Moreover, still in the stochastic control case the HJB equation satisfied by $w_0(x)$ (Theorem \ref{th:4.1}) is studied and an explicit formula for $w_0(x)$ (Theorem \ref{th:4.2}) is given with the help of the $g$-expectation, a nonlinear expectation introduced by Peng in \cite{peng}.

\section{ {\protect \large Preliminaries}}
Let $\{W_t\}_{t\geq0}$ be a standard $d$-dimensional Brownian motion defined on a complete probability space $(\Omega,\mathcal{F},\mathbb{P})$. Let $\mathbb{F}=\{\mathcal{F}_t\}_{t\geq0}$ be the filtration generated by $\{W_t\}_{t\geq0}$, and augmented by all $\mathbb{P}$-null sets. We put $\mathcal{F}_\infty=\bigvee\limits_{t\geq0}\mathcal{F}_t$. For any $N\geq1$, $|x|$ denotes the Euclidean norm of $x\in\mathbb{R}^N$ and $\langle\cdot,\cdot\rangle$ denotes the Euclidean scalar product. We introduce the following spaces of stochastic processes:
\begin{equation*}
\begin{split}
  &S_{\mathbb{F}}^2(\mathbb{R}):=\Big\{(\phi_t)_{0\leq t< \infty}\ \text{real-valued continuous}\ \mathbb {F}\text{-adapted process}: \mathbb{E}[\sup\limits_{t\in[0,\infty)}|\phi_t|^2] <\infty\Big\};\\
 &\mathcal{H}_{\mathbb{F}}^2(\mathbb{R}^{d}):= \Big\{(\phi_t)_{0\leq t< \infty}\ \mathbb{R}^{d}\text{-valued}\ \mathbb{F}\text{-progressively measurable process}: \mathbb{E}[\int_0^\infty|\phi_t|^2dt] <\infty\Big\};\\
 &\mathcal{H}_{\mathbb{F}}^{2,-2\lambda}(0,T;\mathbb{R}^d):=\{(\phi_t)_{0\leq t\leq T}\ \mathbb{R}^d\text{-valued}\ \mathbb{F}\text{-progressively measurable process}: \\
&\qquad\qquad\qquad\qquad\quad \mathbb{E}[\int_0^T\exp(-2\lambda t)|\phi_t|^2dt]<\infty\};\\
 &L_{\mathbb{F}}^\infty(0,\infty;\mathbb{R}^d):=\{(\phi_t)_{0\leq t< \infty}\ \mathbb{R}^{d}\text{-valued}\ \mathbb {F}\text{-adapted\ essentially\ bounded\ process}\};\\
 &L^2(\mathcal{F}_\infty;\mathbb{R}):=\Big\{\xi\ \text{real-valued}\ \mathcal{F}_\infty \text{-measurable random variable}:\mathbb{E}[|\xi|^2]<\infty\Big\}.
 \end{split}
\end{equation*}

We suppose that $(U,d)$ is a compact metric space, $U$ is our control state space, and $\mathcal{U}=L_{\mathbb{F}}^\infty(0,\infty;U)$ is the space of all admissible control processes. It is defined as the set of all $U$-valued $\mathbb{F}$-adapted processes. Let us consider functions $b:\mathbb{R}^N\times U \rightarrow \mathbb{R}^N$ and $\sigma:\mathbb{R}^N\times U\rightarrow\mathbb{R}^{N\times d}$ satisfying standard conditions of continuity and Lipschitz property:
\begin{equation*}\label{r1}
  \left\{
\begin{array}{llll}
\mbox{(Hi)}\ b,\ \sigma\ \mbox{are\ uniformly\ continuous\ on}\ \mathbb{R}^N\times U,\\
\mbox{(Hii)}\ \text{There\ exists\ a\ constant}\ c>0\ \mbox{such\ that} \\
\ \ \ |b(x,u)-b(x',u)|+|\sigma(x,u)-\sigma(x',u)| \leq c|x-x'|,\ \mbox{for\ all}\ x,\ x'\in\mathbb{R}^N,\ u\in U,\\   \tag{H1}
\ \ \ |b(x,u)|+|\sigma(x,u)|\leq c(1+|x|),\ \mbox{for\ all}\ x\in\mathbb{R}^N,\ u\in U.
\end{array}
\right.
\end{equation*}
\begin{lemma}\label{l:2.1}
Under our standard assumptions (H1), for all control $u\in\mathcal{U}$, the controlled stochastic system
\begin{equation}\label{r2}
  \left\{
\begin{array}{ll}
dX_t^{x,u}=b(X_t^{x,u},u_t)dt+\sigma(X_t^{x,u},u_t)dW_t, \ \ t\geq 0, \\
X_0^{x,u}=x\in\mathbb{R}^N,
\end{array}
\right.
\end{equation}
has a unique $\mathbb{R}^N$-valued continuous, $\mathbb{F}$-adapted solution $X^{x,u}=(X^{x,u}_t)_{t\geq0}$. Moreover, for all $T>0$, and $k\geq2$, there is a constant $C_k(T)>0$ such that
\begin{equation*}
\begin{split}
  &\mathbb{E}[\sup\limits_{0\leq s\leq t}|X_s^{x,u}|^k]\leq C_k(T)(1+|x|^k),\\
  &\mathbb{E}[\sup\limits_{0\leq s\leq t}|X_s^{x,u}-X_s^{x',u}|^k]\leq C_k(T)|x-x'|^k,\ t\in[0,T], x,\ x'\in\mathbb{R}^N, u\in\mathcal{U}.
\end{split}
\end{equation*}
\end{lemma}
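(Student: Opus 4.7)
The plan is to invoke the standard existence and uniqueness theory for SDEs with Lipschitz coefficients, and then to establish the moment bounds by It\^o's formula together with the Burkholder-Davis-Gundy (BDG) inequality and Gronwall's lemma.

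First, I would fix $u\in\mathcal{U}$ and view the coefficients $\tilde b(t,\omega,x):=b(x,u_t(\omega))$ and $\tilde\sigma(t,\omega,x):=\sigma(x,u_t(\omega))$ as random time-dependent fields. Since $u$ is $\mathbb{F}$-progressively measurable and $b,\sigma$ are continuous in $(x,u)$ by (Hi), both $\tilde b$ and $\tilde\sigma$ are $\mathbb{F}$-progressively measurable in $(t,\omega)$ and, by (Hii), uniformly Lipschitz in $x$ as well as of linear growth, with constants independent of $(t,\omega)$ and of $u$. A Picard iteration on each interval $[0,T]$ in $\mathcal{H}_{\mathbb{F}}^{2}(0,T;\mathbb{R}^{N})$ yields a unique continuous $\mathbb{F}$-adapted solution $X^{x,u}$ on $[0,T]$; because $T>0$ is arbitrary and uniqueness forces agreement on intersections, pasting produces a global solution on $[0,\infty)$.

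For both a priori estimates I would run the same It\^o/BDG/Gronwall scheme. Apply It\^o's formula to $|X_t^{x,u}|^{k}$ (which is $C^{2}$ for $k\geq 2$), bound the drift term by $c(1+|X_t^{x,u}|)|X_t^{x,u}|^{k-1}\leq C(1+|X_t^{x,u}|^{k})$ using the linear-growth part of (H1), and take $\sup_{s\leq t}$ before expectation. The martingale part is controlled by BDG, after which Young's inequality absorbs half of $\mathbb{E}[\sup_{r\leq t}|X_r^{x,u}|^{k}]$ on the right; Gronwall on $[0,T]$ then gives $\mathbb{E}[\sup_{s\leq t}|X_s^{x,u}|^{k}]\leq C_{k}(T)(1+|x|^{k})$. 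For the Lipschitz-in-$x$ estimate I would apply exactly the same scheme to the difference $\Delta_t:=X_t^{x,u}-X_t^{x',u}$, noting that its drift and diffusion coefficients are $b(X_t^{x,u},u_t)-b(X_t^{x',u},u_t)$ and $\sigma(X_t^{x,u},u_t)-\sigma(X_t^{x',u},u_t)$, both of which are bounded by $c|\Delta_t|$ uniformly in $u$ by (Hii); the same computation yields $\mathbb{E}[\sup_{0\leq s\leq t}|\Delta_s|^{k}]\leq C_{k}(T)|x-x'|^{k}$.

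The main obstacle here is purely bookkeeping: to absorb the sup-norm term one must first know it is finite, which requires a standard localisation by $\tau_n:=\inf\{t\geq 0:|X_t^{x,u}|\geq n\}$ and passing to the limit $n\to\infty$ via Fatou once the bound is established for the stopped process. Beyond this, the argument is entirely classical under (H1); nothing in it is specific to the control-dependent setting, precisely because the Lipschitz and linear-growth constants in (Hii) are uniform in $u\in U$.
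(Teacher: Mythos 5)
Your proposal is correct and is exactly the classical argument (Picard iteration, then It\^o's formula combined with Burkholder--Davis--Gundy, Young's inequality, localisation and Gronwall) that the paper itself does not reproduce but simply defers to Ikeda--Watanabe and Karatzas--Shreve. The one point you rightly flag --- that the Lipschitz and linear-growth constants in (Hii) are uniform in $u\in U$, so nothing changes in the controlled setting --- is precisely why the paper treats this as standard.
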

The above result on SDEs is by now well known; for its proof the readers can refer to Ikeda, Watanabe \cite[pp.166-168]{Ikeda 1989} or  Karatzas, Shreve \cite[pp.289-290]{Karatzas 1987}.

We suppose that there exists a non-empty open set $\theta\subset \mathbb{R}^N$ with compact closure $\overline{\theta}$  such that $\overline{\theta}$ is invariant with respect to the control system (\ref{r2}). Recall that the invariance of $\overline{\theta}$ is defined by the fact that, for all control process $u\in\mathcal{U}$, if $x\in\overline{\theta}$, also $X_t^{x,u}\in\overline{\theta}$, for all $t\geq0$, $\mathbb{P}$-a.s.

Given now a function $\psi:\mathbb{R}^N\times \mathbb{R}^d\times U\rightarrow \mathbb{R}$, for any $\lambda>0$, we consider the following BSDE on the infinite time interval $[0,\infty)$:
\begin{equation}\label{r40}
  \overline{Y}_t^{\lambda,x,u}=\overline{Y}_T^{\lambda,x,u}+\int_t^T(\psi(X_s^{x,u},\overline{Z}_s^{\lambda,x,u},u_s)-\lambda \overline{Y}_s^{\lambda,x,u})ds-\int_t^T\overline{Z}_s^{\lambda,x,u}dW_s,\ 0\leq t\leq T<\infty.
\end{equation}
\begin{definition}
A couple of processes $(\overline{Y}^{\lambda,x,u},\overline{Z}^{\lambda,x,u})$ is called a solution of BSDE (\ref{r40}) on the infinite time interval if $(\overline{Y}^{\lambda,x,u},\overline{Z}^{\lambda,x,u})$ satisfies the equation (\ref{r40}), and $\overline{Y}^{\lambda,x,u}=(\overline{Y}^{\lambda,x,u}_t)_{t\geq0}\in\mathcal{S}_{\mathbb{F}}^2(\mathbb{R})$ is bounded by some constant $\widetilde{M}$ and $\overline{Z}^{\lambda,x,u}=(\overline{Z}^{\lambda,x,u}_t)_{t\geq0}$ is in the space
$$\mathcal{H}_{loc}^2(\mathbb{R}^d)=\{(\phi_t)_{0\leq t<\infty}: \mathbb{R}^d\mbox{-valued}\ \mathbb{F}\mbox{-progressively measurable},\
\displaystyle\mathbb{E}[\int_0^T|\phi_t|^2dt]<+\infty,\ 0\leq T<\infty\}.$$
\end{definition}

We suppose that $\psi:\mathbb{R}^N\times \mathbb{R}^d\times U\rightarrow \mathbb{R}$ satisfies the following conditions:
\begin{equation*}\label{r48}
  \left\{
\begin{array}{llll}
\mbox{(Hiii)}\ \psi\ \text{is\ continuous\ on}\ \mathbb{R}^N\times\mathbb{R}^d\times U;\\
\mbox{(Hiv)}\ \text{There\ exist\ nonnegative\ constants}\ K_x, K_z\ \mbox{and}\ M \ \text{such\ that}\\
\ \ \ |\psi(x,z,u)-\psi(x',z',u)|\leq K_x|x-x'|+K_z|z-z'|,\\
\ \ \ |\psi(x,0,u)|\leq M, \ \ (x,x',z,z',u)\in \mathbb{R}^{2N}\times\mathbb{R}^{2d}\times U.\tag{H2}
\end{array}
\right.
\end{equation*}
The following proposition will be used frequently in what follows. We adapt the proof from \cite{Debussche 2011}, and also prove new estimates.
\begin{proposition}\label{th:2.4}
Under the assumptions (\ref{r1}) and (\ref{r48}), BSDE (\ref{r40}) on the infinite time interval $[0,\infty)$ has a unique solution $(\overline{Y}^{\lambda,x,u},\overline{Z}^{\lambda,x,u})\in L^\infty
_{\mathbb{F}}(0,T;\mathbb{R})\times\mathcal{H}^2_{loc}(\mathbb{R}^d)$. Moreover, we have
\begin{equation*}
  |\overline{Y}_t^{\lambda,x,u}|\leq\frac{M}{\lambda},\ t\geq0,\ \text{and}\ \mathbb{E}[\int_0^\infty|e^{-\lambda t}\overline{Z}_t^{\lambda,x,u}|^2dt]\leq2(\frac{M}{\lambda})^2(2+\frac{K_z^2}{\lambda}).
\end{equation*}
\end{proposition}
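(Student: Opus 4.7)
The plan is to construct the solution on $[0,\infty)$ as the limit of solutions to finite-horizon BSDEs with zero terminal data, exploiting the discount term $-\lambda\overline{Y}$ to obtain uniform estimates. For each $n\ge 1$, the finite-horizon BSDE
$$Y^n_t=\int_t^n\!\bigl(\psi(X_s^{x,u},Z^n_s,u_s)-\lambda Y^n_s\bigr)ds-\int_t^n Z^n_s\,dW_s,\quad t\in[0,n],$$
has a unique solution $(Y^n,Z^n)\in\mathcal{S}_{\mathbb F}^{2}(0,n;\mathbb R)\times\mathcal{H}_{\mathbb F}^{2}(0,n;\mathbb R^d)$ by the classical Pardoux--Peng theorem, since the driver is uniformly Lipschitz in $(y,z)$ and the terminal value is zero. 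The key device is to write $\psi(X_s,Z^n_s,u_s)=\psi(X_s,0,u_s)+\beta_s^n\cdot Z^n_s$ with $|\beta^n|\le K_z$, and then apply Girsanov to pass to an equivalent measure $\mathbb Q^n$ on $\mathcal F_n$ under which $\widetilde W_t:=W_t-\int_0^t\beta_s^n\,ds$ is a Brownian motion; the equation for $Y^n$ then becomes affine in $Z^n$ with bounded coefficients.

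Applying It\^o to $e^{-\lambda t}Y^n_t$ under $\mathbb Q^n$ and taking conditional expectations yields
$$e^{-\lambda t}Y^n_t=\mathbb E^{\mathbb Q^n}\!\left[\int_t^n e^{-\lambda s}\psi(X_s^{x,u},0,u_s)\,ds\;\Big|\;\mathcal F_t\right],$$
from which $|Y^n_t|\le M/\lambda$ uniformly in $n$ follows immediately. Applying the same linearization to the difference $Y^m-Y^n$ (for $m>n$) on $[0,n]$, where it carries the terminal value $Y^m_n$ bounded by $M/\lambda$, yields
$$|Y^m_t-Y^n_t|\le\frac{M}{\lambda}\,e^{-\lambda(n-t)},\quad t\in[0,n],\ m\ge n,$$
so $(Y^n)$ is Cauchy uniformly on every compact time interval; define $\overline{Y}_t:=\lim_n Y^n_t$. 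A standard stability argument based on It\^o's formula applied to $|Y^m-Y^n|^2$ on $[0,T]$ (with BDG handling the martingale part) shows that $(Z^n)$ is Cauchy in $\mathcal{H}^{2}_{\mathbb F}(0,T;\mathbb R^d)$ for every $T>0$; let $\overline{Z}$ be its limit. Passing to the limit in the finite-horizon BSDE gives a solution $(\overline{Y},\overline{Z})$ of (\ref{r40}) with $|\overline{Y}_t|\le M/\lambda$.

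For the $\overline{Z}$ estimate, It\^o's formula applied to $e^{-2\lambda t}|\overline{Y}_t|^2$ on $[0,T]$, followed by taking expectations, produces
$$\mathbb E\!\int_0^T\!e^{-2\lambda s}|\overline{Z}_s|^2 ds=|\overline{Y}_0|^2-e^{-2\lambda T}\mathbb E|\overline{Y}_T|^2+\mathbb E\!\int_0^T\!2 e^{-2\lambda s}\overline{Y}_s\,\psi(X_s^{x,u},\overline{Z}_s,u_s)\,ds.$$
Using $|\psi(\cdot,z,\cdot)|\le M+K_z|z|$, $|\overline{Y}|\le M/\lambda$, and Young's inequality $2|\overline{Y}|K_z|\overline{Z}|\le\tfrac12|\overline{Z}|^2+2K_z^2|\overline{Y}|^2$, absorbing $\tfrac12|\overline{Z}|^2$ on the left and letting $T\to\infty$ yields exactly $\mathbb E\int_0^\infty e^{-2\lambda s}|\overline{Z}_s|^2 ds\le 2(M/\lambda)^2(2+K_z^2/\lambda)$. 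Uniqueness is proved by the same Girsanov linearization: if $(\overline{Y}^i,\overline{Z}^i)$, $i=1,2$, are two bounded solutions with $|\overline{Y}^i|\le\widetilde M$, their difference satisfies $e^{-\lambda t}(\overline{Y}^1_t-\overline{Y}^2_t)=\mathbb E^{\mathbb Q}[e^{-\lambda T}(\overline{Y}^1_T-\overline{Y}^2_T)\mid\mathcal F_t]$ for every $T>t$; sending $T\to\infty$ and using the uniform bound forces $\overline{Y}^1=\overline{Y}^2$, and then $\overline{Z}^1=\overline{Z}^2$ follows from martingale representation.

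The main obstacle is the interplay between the Girsanov change of measure and the infinite-horizon limit: the density $\mathcal{E}(\int\beta\,dW)_T$ depends on the horizon $T$, so one must keep every conditional expectation on a finite interval $[t,n]$ (or $[t,T]$) and let the exponential factor $e^{-\lambda(n-t)}$ supplied by the discount absorb the terminal contribution $Y^m_n$ before passing to the limit. A secondary point that needs attention is the Cauchy property of $Z^n$, which is not a direct consequence of the $L^\infty$ bound on $Y^n$ and requires the It\^o-on-$|\delta Y|^2$ argument combined with the Young-type absorption of the $K_z|\delta Z|$ term produced by the driver.
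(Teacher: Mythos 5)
Your proposal follows essentially the same route as the paper's proof: approximation by finite-horizon BSDEs with zero terminal data, the Girsanov linearization of the $z$-dependence to obtain the uniform bound $M/\lambda$ and the exponentially decaying Cauchy estimate $\frac{M}{\lambda}e^{-\lambda(n-t)}$, the It\^{o}-on-the-square argument for the Cauchy property of $Z^n$, the It\^{o}/Young computation for the weighted $\overline{Z}$ estimate, and the same Girsanov-plus-$T\to\infty$ argument for uniqueness. The proof is correct and matches the paper's in all essential steps.
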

\begin{proof}
\textbf{Uniqueness}. Let $x\in\mathbb{R}^N \mbox{and}\ u\in \mathcal{U}$ be arbitrarily given. Suppose that $(\overline{Y}_t^{1,\lambda,x,u},\overline{Z}_t^{1,\lambda,x,u})_{t\geq0}$ and $(\overline{Y}_t^{2,\lambda,x,u},\overline{Z}_t^{2,\lambda,x,u})_{t\geq0}$ are two solutions of BSDE (\ref{r40}) such that $\overline{Y}^{1,\lambda,x,u}, \overline{Y}^{2,\lambda,x,u}$ are continuous and bounded and $\overline{Z}^{1,\lambda,x,u}, \overline{Z}^{2,\lambda,x,u}\in \mathcal{H}^2_{loc}(\mathbb{R}^d)$. Let us set $\widehat{Y}_t=\overline{Y}_t^{1,\lambda,x,u}-\overline{Y}_t^{2,\lambda,x,u}$ and $\widehat{Z}_t=\overline{Z}_t^{1,\lambda,x,u}-\overline{Z}_t^{2,\lambda,x,u}$, $t\geq0$. Then, $\widehat{Y}$ is continuous and $|\widehat{Y}|\leq\overline{M}$, for some constant $\overline{M}$.
We define
\begin{equation*}
  \gamma_s=\left\{
\begin{array}{lll}  &\frac{\psi(X_s^{x,u},\overline{Z}_s^{1,\lambda,x,u},u_s)-\psi(X_s^{x,u},\overline{Z}_s^{2,\lambda,x,u},u_s)}{|\widehat{Z}_s|^2}(\widehat{Z}_s)^*,\ \mbox{if}\  \widehat{Z}_s\neq 0;\\
& 0, \ \mbox{otherwise},
\end{array}\right.
\end{equation*}
and we notice that $|\gamma_s|\leq K_z,\ s\geq0$. Let $0\leq T<\infty$ be arbitrarily fixed. We define the probability $\mathbb{P}_T^\gamma$ on $(\Omega,\mathcal{F})$ by setting
\begin{equation*}
  \frac{d\mathbb{P}_T^\gamma}{d\mathbb{P}}=\exp\{\int_0^T\gamma_sdW_s-\frac{1}{2}\int_0^T|\gamma_s|^2ds\}.
\end{equation*}
Then, from Girsanov's theorem,
\begin{equation*}
\begin{split}
  \widehat{Y}_t=&\widehat{Y}_T+\int_t^T(\psi(X_s^{x,u},\overline{Z}_s^{1,\lambda,x,u},u_s)-\psi(X_s^{x,u},\overline{Z}_s^{2,\lambda,x,u},u_s))ds-\lambda\int_t^T\widehat{Y}_sds-\int_t^T\widehat{Z}_sdW_s\\
  =& \widehat{Y}_T-\lambda\int_t^T\widehat{Y}_sds-\int_t^T\widehat{Z}_s(dW_s-\gamma_sds)\\
  =&\widehat{Y}_T-\lambda\int_t^T\widehat{Y}_sds-\int_t^T\widehat{Z}_sdW^{\gamma,T}_s,\ t\in[0,T],
  \end{split}
\end{equation*}
where $\displaystyle W^{\gamma,T}_t=W_t-\int_0^t\gamma_sds,\ t\in[0,T],$ is an $(\mathbb{F},\mathbb{P}_T^\gamma)$-Brownian motion. Applying It\^{o}'s formula to $e^{-\lambda s}\widehat{Y}_s$, we get
\begin{equation*}
  e^{-\lambda T}\widehat{Y}_T-e^{-\lambda t}\widehat{Y}_t=\int_t^Te^{-\lambda s}\widehat{Z}_sdW_s^{\gamma,T},\ t\in[0,T].
\end{equation*}
From standard estimates we see that $\displaystyle (\int_0^te^{-\lambda s}\widehat{Z}_sdW_s^{\gamma,T})_{t\in[0,T]}$ is an $(\mathbb{F},\mathbb{P}_T^\gamma)$-martingale.
Thus, denoting by $\mathbb{E}^\gamma_T[\cdot\big|\mathcal{F}_t]$ the conditional expectation under $\mathbb{P}_T^\gamma$, it follows that
\begin{equation*}
  \widehat{Y}_t=\mathbb{E}^\gamma_T[\widehat{Y}_t\big|\mathcal{F}_t]=\mathbb{E}_T^\gamma[e^{-\lambda (T-t)}\widehat{Y}_T\big|\mathcal{F}_t]-\mathbb{E}_T^\gamma[\int_t^Te^{-\lambda (s-t)}\widehat{Z}_sdW_s^{\gamma,T}\big|\mathcal{F}_t]=\mathbb{E}_T^\gamma[e^{-\lambda (T-t)}\widehat{Y}_T\big|\mathcal{F}_t],\ t\in[0,T].
\end{equation*}
Recall that $|\widehat{Y}_s|\leq\overline{M},\ s\geq0$. Hence, $|\widehat{Y}_t|\leq e^{-\lambda(T-t)}\overline{M},\ 0\leq t\leq T<\infty$.
Finally, letting $T$ tend to infinity, we obtain that, for any $t\geq0$, $\widehat{Y}_t=0,\ \mathbb{P}\text{-}a.s.$, i.e., $\overline{Y}_t^{1,\lambda,x,u}=\overline{Y}_t^{2,\lambda,x,u},\ \text{for\ all}\ t\geq0,\ \mathbb{P}\text{-}a.s.$

\medskip

\textbf{Existence.} For arbitrarily given $x\in\mathbb{R}^N, u\in \mathcal{U}\ \mbox{and}\ n\geq1$, we define $(\overline{Y}_t^{n,\lambda,x,u},\overline{Z}_t^{n,\lambda,x,u})_{t\geq 0}\in\mathcal{S}_{\mathbb{F}}^2([0,n];\mathbb{R})\times\mathcal{H}_{\mathbb{F}}^2([0,n];\mathbb{R}^d)$ as the unique solution of the following BSDE:
\begin{equation}\label{r80}
  \overline{Y}_t^{n,\lambda,x,u}=\int_t^n(\psi(X_s^{x,u},\overline{Z}_s^{n,\lambda,x,u},u_s)-\lambda\overline{Y}_s^{n,\lambda,x,u})ds-\int_t^n\overline{Z}_s^{n,\lambda,x,u}dW_s,\ t\in[0,n].
\end{equation}
Then, from a classical result for BSDEs we get the existence and the uniqueness of the solution $(\overline{Y}^{n,\lambda,x,u},\overline{Z}^{n,\lambda,x,u})$ under the assumption (H2). Now we will give the proof in four steps.\\
\textbf{Step 1.} $(\overline{Y}_t^{n,\lambda,x,u})_{t\in[0,n]}$ is bounded, uniformly with respect to $n$.\\
\indent Indeed, by introducing the ${\mathbb{F}}$-adapted process
\begin{equation*}
  \gamma_s^n=\left\{
\begin{array}{lll}  &\frac{\psi(X_s^{x,u},\overline{Z}_s^{n,\lambda,x,u},u_s)-\psi(X_s^{x,u},0,u_s)}{|\overline{Z}_s^{n,\lambda,x,u}|^2}(\overline{Z}_s^{n,\lambda,x,u})^*, \ \mbox{if}\  \overline{Z}_s^{n,\lambda,x,u}\neq 0;\\
& 0, \ \mbox{otherwise},\ s\in[0,n],
\end{array}\right.
\end{equation*}
the above BSDE takes the form
\begin{equation*}
  \overline{Y}_t^{n,\lambda,x,u}=\int_t^n(\psi(X_s^{x,u},0,u_s)-\lambda\overline{Y}_s^{n,\lambda,x,u})ds-\int_t^n\overline{Z}_s^{n,\lambda,x,u}(dW_s-\gamma_s^nds), \ t\in[0,n].
\end{equation*}
As $\mid\gamma_s^n\mid\leq K_z,\ s\in[0,n],\ n\geq0,$ we know from the Girsanov Theorem that $\displaystyle W_t^n=W_t-\int_0^t\gamma_s^nds,\ t\in[0,n],$ is a Brownian motion under $\displaystyle d\mathbb{P}^n=\exp\{\int_0^n\gamma_s^ndW_s-\frac{1}{2}\int_0^n|\gamma_s^n|^2ds\}d\mathbb{P}$.

Consequently, applying It\^{o}'s formula to $e^{-\lambda t}\overline{Y}^{n,\lambda,x,u}_t,\ t\in[0,n]$, and taking the conditional expectation $\mathbb{E}^n[\cdot\big|\mathcal{F}_t]$ with respect to $\mathbb{P}^n$, we obtain
\begin{equation*}
  \overline{Y}_t^{n,\lambda,x,u}=\mathbb{E}^n[\int_t^ ne^{-\lambda(s-t)}\psi(X_s^{x,u},0,u_s)ds\big|\mathcal{F}_t],\ t\in[0,n].
\end{equation*}
Finally, as $|\psi(x',0,u')|\leq M,\ (x',u')\in\mathbb{R}^N\times U$, it follows that
\begin{equation*}
  |\overline{Y}_t^{n,\lambda,x,u}|\leq M\int_t^ne^{-\lambda(s-t)}ds\leq\frac{M}{\lambda},\  t\in[0,n],\ n\geq1.
\end{equation*}
Let us show now the second step.\\
\textbf{Step 2.} The sequence $(\overline{Y}_t^{n,\lambda,x,u})_{t\in[0,n]},\ n\geq1,$ converges uniformly on compacts, $\mathbb{P}$-a.s., as $n\rightarrow\infty$.

For $n,\ m\geq1$ with $n\geq m$, we define\\
$$  \gamma_s^{n,m}=\frac{\psi(X_s^{x,u},\overline{Z}_s^{n,\lambda,x,u},u_s)-\psi(X_s^{x,u},\overline{Z}_s^{m,\lambda,x,u},u_s)}
  {|\overline{Z}_s^{n,\lambda,x,u}-\overline{Z}_s^{m,\lambda,x,u}|^2}(\overline{Z}_s^{n,\lambda,x,u}-\overline{Z}_s^{m,\lambda,x,u})^*,\ \mbox{if}\  \overline{Z}_s^{n,\lambda,x,u}\neq\overline{Z}_s^{m,\lambda,x,u};$$
\noindent and $\gamma_s^{n,m}=0$, otherwise, $s\in[0,m]$. As $|\gamma_s^{n,m}|\leq K_z,\ s\in[0,m]$, we can use the Girsanov Theorem to introduce the probability measure $\displaystyle d\mathbb{P}^{n,m}=\exp\{\int_0^m\gamma_s^{n,m}dW_s-\frac{1}{2}\int_0^m|\gamma_s^{n,m}|^2ds\}d\mathbb{P},$ under which $\displaystyle W_t^{n,m}=W_t-\int_0^t\gamma_s^{n,m}ds,\ t\in[0,m],$ is an $\mathbb{F}$-Brownian motion. From (\ref{r80}) we have
\begin{equation*}
  \overline{Y}_t^{n,\lambda,x,u}-\overline{Y}_t^{m,\lambda,x,u}=e^{-\lambda(m-t)}\overline{Y}_m^{n,\lambda,x,u}+\int_t^me^{-\lambda (s-t)}(\overline{Z}_s^{n,\lambda,x,u}-\overline{Z}_s^{m,\lambda,x,u})dW_s^{n,m},\ t\in[0,m].
\end{equation*}
Consequently, considering that $|\overline{Y}_m^{n,\lambda,x,u}|\leq\frac{M}{\lambda}$ (Step1), by taking the conditional expectation under $\mathbb{P}^{n,m}$, we get
\begin{equation*}
  |\overline{Y}_t^{n,\lambda,x,u}-\overline{Y}_t^{m,\lambda,x,u}|=|\mathbb{E}^{n,m}[e^{-\lambda(m-t)}\overline{Y}_m^{n,\lambda,x,u}\big|\mathcal{F}_t]|\leq\frac{M}{\lambda}e^{-\lambda(m-t)},\ 0\leq t\leq m\leq n,
\end{equation*}
i.e., for all $T>0,\ n\geq m\geq T,$
\begin{equation}\label{t1}
  \sup\limits_{t\in[0,T]}|\overline{Y}_t^{n,\lambda,x,u}-\overline{Y}_t^{m,\lambda,x,u}|\leq \frac{M}{\lambda}e^{-\lambda(m-T)}\xrightarrow[ n\geq m\rightarrow\infty]{}
0,\ \mathbb{P}\mbox{-}a.s.
\end{equation}
Consequently, there is a continuous adapted process $\overline{Y}^{\lambda,x,u}=(\overline{Y}_t^{\lambda,x,u})_{t\geq0}$ to which $(\overline{Y}_{t\wedge n}^{n,\lambda,x,u})_{t\geq0}$ converges uniformly on compacts, $\mathbb{P}$-a.s. Moreover, $|\overline{Y}_t^{\lambda,x,u}|\leq\frac{M}{\lambda},\ t\geq0,\  \mathbb{P}$-a.s.\\
\textbf{Step 3.} There is a process $\overline{Z}^{\lambda,x,u}=(\overline{Z}_t^{\lambda,x,u})_{t\geq0}\in\mathcal{H}^2_{loc}(\mathbb{R}^d)$ such that, for all $T>0$,
\begin{equation*}
  \mathbb{E}[\int_0^T|\overline{Z}_t^{\lambda,x,u}-\overline{Z}_t^{n,\lambda,x,u}|^2dt]\xrightarrow[n\rightarrow\infty]{}
0.
\end{equation*}

Indeed, for $n\geq m\geq T$, we get from (\ref{r80}) and (\ref{t1})
\begin{equation*}
\begin{split}
  &\mathbb{E}[\int_0^T|\overline{Z}_t^{n,\lambda,x,u}-\overline{Z}_t^{m,\lambda,x,u}|^2dt]\\
  \leq&2\mathbb{E}[\int_0^T|\overline{Y}_s^{n,\lambda,x,u}-\overline{Y}_s^{m,\lambda,x,u}||\psi(X_s^{x,u},\overline{Z}_s^{n,\lambda,x,u},u_s)-\psi(X_s^{x,u},\overline{Z}_s^{m,\lambda,x,u},u_s)| ds]\\
  &+\mathbb{E}[|\overline{Y}_T^{n,\lambda,x,u}-\overline{Y}_T^{m,\lambda,x,u}|^2]\leq (4K_z^2T+1)\frac{M^2}{\lambda^2}e^{-2\lambda(m-T)}+\frac{1}{2}\mathbb{E}[\int_0^T|\overline{Z}_t^{n,\lambda,x,u}-\overline{Z}_t^{m,\lambda,x,u}|^2dt].
\end{split}
\end{equation*}
This proves that, for $n\geq m\geq T,$
\begin{equation*}
 \mathbb{E}[\int_0^T|\overline{Z}_t^{n,\lambda,x,u}-\overline{Z}_t^{m,\lambda,x,u}|^2dt]\leq2(4K_z^2T+1)\frac{M^2}{\lambda^2}e^{-2\lambda(m-T)}\xrightarrow[n\geq m\rightarrow\infty]{}0.
\end{equation*}
This completes Step 3.\\
\textbf{Step 4.} Finally, recall that from (\ref{r80}), for $n\geq T$,
\begin{equation*}
  \overline{Y}_t^{n,\lambda,x,u}= \overline{Y}_T^{n,\lambda,x,u}+\int_t^T(\psi(X_s^{x,u},\overline{Z}_s^{n,\lambda,x,u},u_s)-\lambda\overline{Y}_s^{n,\lambda,x,u})ds-\int_t^T\overline{Z}_s^{n,\lambda,x,u}dW_s,\ t\in[0,T].
\end{equation*}
The Steps 2 and 3 allow to take the limit in this BSDE, as $n\rightarrow\infty$, and we obtain that $(\overline{Y}^{\lambda,x,u},\overline{Z}^{\lambda,x,u})$ is the solution of the following BSDE:
\begin{equation}\label{r81}
  \overline{Y}_t^{\lambda,x,u}=\overline{Y}_T^{\lambda,x,u}+\int_t^T(\psi(X_s^{x,u},\overline{Z}_s^{\lambda,x,u},u_s)-\lambda\overline{Y}_s^{\lambda,x,u})ds-\int_t^T\overline{Z}_s^{\lambda,x,u}dW_s,\ 0\leq t\leq T<+\infty,
\end{equation}
with $| \overline{Y}_t^{\lambda,x,u}|\leq\frac{M}{\lambda},\ t\geq0,$ and $\overline{Z}^{\lambda,x,u}\in\mathcal{H}^2_{loc}(\mathbb{R}^d)$.

It remains to show that
\begin{equation*}
\mathbb{E}[\int_0^\infty|e^{-\lambda t}\overline{Z}_t^{\lambda,x,u}|^2dt]\leq2(\frac{M}{\lambda})^2(2+\frac{K_z^2}{\lambda}).
\end{equation*}
For this, by applying It\^{o}'s formula to $|e^{-\lambda t}\overline{Y}_t^{\lambda,x,u}|^2$, it follows from (\ref{r81}) that, for all $T>0$,
\begin{equation*}
  \begin{split}
  &\mathbb{E}[\int_0^T|e^{-\lambda t}\overline{Z}_t^{\lambda,x,u}|^2dt]\leq\mathbb{E}[|e^{-\lambda T}\overline{Y}_T^{\lambda,x,u}|^2+2\int_0^Te^{-2\lambda t}\overline{Y}_t^{\lambda,x,u}\psi(X_t^{x,u},\overline{Z}_t^{\lambda,x,u},u_t)dt]\\
  \leq&\mathbb{E}[|e^{-\lambda T}\overline{Y}_T^{\lambda,x,u}|^2+2\int_0^Te^{-2\lambda t}|\overline{Y}_t^{\lambda,x,u}|(M+K_z|\overline{Z}_t^{\lambda,x,u}|)dt],
  \end{split}
\end{equation*}
and, hence,
\begin{equation*}
  \frac{1}{2}\mathbb{E}[\int_0^T|e^{-\lambda t}\overline{Z}_t^{\lambda,x,u}|^2dt]\leq\mathbb{E}[|e^{-\lambda T}\overline{Y}_T^{\lambda,x,u}|^2]+2\mathbb{E}[\int_0^Te^{-2\lambda t}M|\overline{Y}_t^{\lambda,x,u}|dt]+2K_z^2\mathbb{E}[\int_0^T|e^{-\lambda t}\overline{Y}_t^{\lambda,x,u}|^2dt].
\end{equation*}
Therefore, using that $|\overline{Y}_t^{\lambda,x,u}|\leq\frac{M}{\lambda},\ t\geq0$, we obtain the stated estimate for $\displaystyle \mathbb{E}[\int_0^\infty|e^{-\lambda t}\overline{Z}_t^{\lambda,x,u}|^2dt]$.
\end{proof}
For any $\lambda>0$, let us define the value function
\begin{equation}\label{r94}
  V_\lambda(x):=\inf\limits_{u\in\mathcal{U}}\overline{Y}_0^{\lambda,x,u},\ x\in\mathbb{R}^N,
\end{equation}
\noindent where $\overline{Y}^{\lambda,x,u}$ is introduced by the BSDE (\ref{r40}).

We make the following so called \underline{nonexpansivity condition} for (\ref{r94}): For all $x,x'\in \mathbb{R}^N, u\in U,$ there exists $v\in U$ such that, for all $z\in\mathbb{R}^d$,
\begin{equation*}\label{r49}
\left\{
\begin{array}{llll}
\mbox{(i)}\ g(x,x',u,v):=\langle x-x',b(x,u)-b(x',v)\rangle+\frac{1}{2}|\sigma(x,u)-\sigma(x',v)|^2\\
 \qquad\qquad\qquad\qquad+K_z|\sigma(x,u)-\sigma(x',v)||x-x'|\leq0;\\
\mbox{(ii)}\ \text{There\ exists\ a\ constant}\ \overline{c}_0>0\ \text{such\ that}\\
\ \ \ \ \  \widetilde{\psi}(x,x',z,u,v):=|\psi(x,z,u)-\psi(x',z,v)|-\overline{c}_0|x-x'|\leq0,   \tag{H3}
\end{array}
\right.
\end{equation*}
with $K_z>0$ introduced in (\ref{r48}).

We also introduce a new \underline{stochastic nonexpansivity condition:} For all $\varepsilon>0,\ \lambda>0,\ x,\ x'\in \overline{\theta}$, and all $u\in\mathcal{U}$, there exists $v\in\mathcal{U}$ such that, for all $\gamma\in L_{\mathbb{F}}^\infty(0,\infty;\mathbb{R}^d)$ with $|\gamma_s|\leq K_z$, dsdP-a.e., and with the notation $\displaystyle L_t^\gamma=\exp\{\int_0^t\gamma_sdW_s-\frac{1}{2}\int_0^t|\gamma_s|^2ds\}$,
\begin{equation*}\label{r50}
\left\{
\begin{array}{llll}
\mbox{(i)}\ \displaystyle\big(\mathbb{E}[L_t^\gamma|X_t^{x,u}-X_t^{x',v}|^2]\big)^{\frac{1}{2}}\leq|x-x'|+\varepsilon,\ t\geq0;\\
\mbox{(ii)}\ \displaystyle\lambda\int_0^\infty e^{-\lambda s}\mathbb{E}[L_s^\gamma|\psi(X_s^{x,u},\overline{Z}_s^{\lambda,x,u},u_s)-\psi(X_s^{x',v},\overline{Z}_s^{\lambda,x,u},v_s)|]ds\leq \overline{c}_0|x-x'|+\varepsilon. \tag{H4}
 \end{array}
\right.
\end{equation*}
\begin{remark}
Let us recall the nonexpansivity condition in \cite{Buckdahn 2013}, established for $\psi=\psi(x,u)$, which is extended by (\ref{r49}): For all $(x,x',u,v)\in\mathbb{R}^{2N}\times U^2$,
\begin{equation*}
  \sup\limits_{u\in U}\inf\limits_{v\in U}\max\big((\langle x-x',b(x,u)-b(x',v)\rangle+\frac{1}{2}|\sigma(x,u)-\sigma(x',v)|^2),|\psi(x,u)-\psi(x',v)|-\overline{c}_0|x-x'|\big)\leq 0.
\end{equation*}
Observe that, if $\psi$ is independent of $z$ (that is, $\psi=\psi(x,u)$), then $K_z=0$ and (\ref{r49}) coincides with the above nonexpansivity condition in \cite{Buckdahn 2013}.\\
\indent But (\ref{r50}) is new, it reformulates the stochastic nonexpansivity condition in \cite{Buckdahn 2013} by taking into account the BSDE over the infinite time interval $[0,\infty)$.
\end{remark}
\begin{example}
Let $d=1$ and $b(x,u)=-3x,\ \sigma(x,u)=x,\ \psi(x,u,z)=z$, for $x\in\mathbb{R}^N,\ u\in U$ and $z\in\mathbb{R}$. Then $K_z=1$, and for $\overline{c}_0=1$ we have
\begin{equation*}
\begin{split}
  g(x,x',u,v):=&\langle x-x',b(x,u)-b(x',v)\rangle+\frac{1}{2}|\sigma(x,u)-\sigma(x',v)|^2+K_z|\sigma(x,u)-\sigma(x',v)||x-x'|\\
  =&-\frac{3}{2}|x-x'|^2\leq0,
  \end{split}
\end{equation*}
and
\begin{equation*}
  \widetilde{\psi}(x,x',z,u,v):=|\psi(x,z,u)-\psi(x',z,v)|-\overline{c}_0|x-x'|=-\overline{c}_0|x-x'|\leq0.
\end{equation*}

\end{example}
\begin{proposition}\label{p:2.1}
Under the assumptions (\ref{r1}) and (\ref{r48}) the nonexpansivity condition (\ref{r49}) implies the stochastic nonexpansivity condition (\ref{r50}).
\end{proposition}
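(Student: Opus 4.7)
The plan is to read (\ref{r49}) as providing, for each $(x,x',u)\in\overline\theta\times\overline\theta\times U$, a nonempty closed set $\Gamma(x,x',u)\subset U$ of admissible responses $v$, to extract from $\Gamma$ a Borel measurable feedback $V$, and then to run an It\^o estimate under the Girsanov density $L^\gamma_t$. The term $K_z|\sigma(x,u)-\sigma(x',v)||x-x'|$ in (\ref{r49})(i) is precisely what absorbs the Girsanov cross-term coming from $\gamma$ when $|\gamma|\le K_z$, and it is this structural match that turns (\ref{r50}) into an integrated version of (\ref{r49}).

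\textbf{Step 1 (Measurable feedback).} Set
$$\Gamma(x,x',u):=\bigl\{v\in U:\ g(x,x',u,v)\le 0\ \text{and}\ |\psi(x,z,u)-\psi(x',z,v)|\le\bar c_0|x-x'|\ \text{for all }z\in\mathbb R^d\bigr\};$$
it is nonempty by (\ref{r49}), closed as the intersection over $z$ of closed conditions (using continuity of $b,\sigma,\psi$), and its graph in $(x,x',u,v)$ is Borel (by reducing the intersection over $z$ to a countable dense subset via continuity of $\psi$ in $z$). The Kuratowski--Ryll-Nardzewski theorem then yields a Borel measurable selector $V:\overline\theta\times\overline\theta\times U\to U$ with $V(x,x',u)\in\Gamma(x,x',u)$.

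\textbf{Step 2 (Euler coupling and It\^o identity).} Given $u\in\mathcal U$ and a step $h>0$, define $v^h\in\mathcal U$ and $X^{x',v^h}$ recursively on $[kh,(k+1)h)$ by
$$v^h_s:=V\bigl(X^{x,u}_{kh},\,X^{x',v^h}_{kh},\,u_s\bigr),$$
with $X^{x',v^h}$ solving the controlled SDE driven by $v^h$ on that subinterval; existence follows from Lemma \ref{l:2.1}, since once the two endpoints at $t=kh$ are fixed $v^h$ is a prescribed $\mathbb F$-adapted control. For $\gamma$ with $|\gamma|\le K_z$, It\^o's formula applied to $L^\gamma_t|\Delta X_t|^2$ with $\Delta X_t:=X^{x,u}_t-X^{x',v^h}_t$, together with $|\langle\Delta X_t,\Delta\sigma_t\gamma_t\rangle|\le K_z|\Delta X_t||\Delta\sigma_t|$, produces a local martingale plus a finite-variation density controlled above by $2L^\gamma_t\,g(X^{x,u}_t,X^{x',v^h}_t,u_t,v^h_t)$. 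Splitting on each Euler interval,
$$g(X^{x,u}_t,X^{x',v^h}_t,u_t,v^h_t)=g\bigl(X^{x,u}_{kh},X^{x',v^h}_{kh},u_t,v^h_t\bigr)+R_t,$$
the first summand is $\le 0$ by (\ref{r49})(i) applied at the frozen arguments, while the remainder satisfies $\mathbb E[L^\gamma_t|R_t|]\le \rho(h)$ with $\rho(h)\to 0$, by uniform continuity of $g$ on the compact $\overline\theta\times\overline\theta\times U\times U$ together with the standard SDE bound $\mathbb E|X^{x,u}_t-X^{x,u}_{kh}|^2\le Ch$ (and analogously for $X^{x',v^h}$).

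\textbf{Step 3 (From (\ref{r49}) to (\ref{r50}) and the main obstacle).} For (\ref{r50})(ii), integrating (\ref{r49})(ii) at the frozen points $V(X^{x,u}_{kh},X^{x',v^h}_{kh},u_s)$ against the kernel $\lambda e^{-\lambda s}ds$ (total mass $1$, effective scale $1/\lambda$) delivers the main bound $\bar c_0|x-x'|$ up to a remainder of order $\rho(h)/\lambda$, which is $\le\varepsilon$ for $h$ small enough depending on $(\lambda,\varepsilon)$. The main obstacle is (\ref{r50})(i): a direct integration of the drift bound only gives $\mathbb E[L^\gamma_t|\Delta X_t|^2]\le|x-x'|^2+Ct\,\rho(h)$, which is \emph{not} uniform in $t\ge 0$. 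To cancel the factor $t$, the plan is to pass to a limit $h\downarrow 0$: the laws of $(X^{x,u},X^{x',v^h},v^h)$ are tight (states live in the compact $\overline\theta$, $U$ is compact, path moduli are controlled via the It\^o isometry), and along a subsequence one obtains a coupling in which the exact feedback identity $v_s=V(X^{x,u}_s,X^{x',v}_s,u_s)$ holds $ds\otimes d\mathbb P$-a.e.; under this exact coupling the drift is pointwise $\le 0$, so $\mathbb E[L^\gamma_t|\Delta X_t|^2]\le|x-x'|^2$ uniformly in $t\ge 0$. The $\varepsilon$-slack in (\ref{r50}) then absorbs the cost of realising the weak-limit control as a strict admissible $v\in\mathcal U$ on the original probability space, via a diagonal $h\downarrow 0$ argument.
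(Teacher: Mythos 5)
Your Steps 1 and 2 are sound and structurally close to the paper's own argument: the Borel selector from the set-valued map $\Xi$, the piecewise-frozen feedback, and the observation that the term $K_z|\sigma(x,u)-\sigma(x',v)||x-x'|$ in (\ref{r49})(i) is exactly what absorbs the Girsanov cross-term $\langle \Delta X_s,(\sigma(X_s^{x,u},u_s)-\sigma(X_s^{x',v},v_s))\gamma_s\rangle$ for $|\gamma|\leq K_z$ are all correct and are the same ingredients the paper uses (the paper updates the feedback at stopping times where $g$ or $\widetilde{\psi}$ first exceed a threshold $\delta$ rather than on a deterministic Euler grid, but that is a variant of the same idea, with quantified bounds on $\mathbb{P}(\tau_n<t_1)$ replacing your modulus $\rho(h)$). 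The genuine gap is in Step 3, precisely at the point you yourself flag as the main obstacle. The proposed cure --- tightness of the laws of $(X^{x,u},X^{x',v^h},v^h)$ and extraction of a weak limit in which the exact feedback identity holds --- does not work as stated: the selector $V$ is only Borel measurable, so neither the identity $v_s=V(X_s^{x,u},X_s^{x',v},u_s)$ nor even the closed constraint $g(X_s^{x,u},X_s^{x',v},u_s,v_s)\leq 0$ passes through weak convergence of laws. Moreover the limit object is at best a relaxed control on an auxiliary space, and realising it as a strict $v\in\mathcal{U}$ adapted to the original Brownian filtration, with an error uniform in $t\geq 0$ and uniform over all $\gamma$ with $|\gamma_s|\leq K_z$, is exactly the difficulty the proposition is about; the diagonal argument in $h$ reintroduces an error of order $t\,\rho(h)$, so the required $h$ depends on the horizon and no single $v$ valid for all $t\geq0$ is produced.

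The mechanism the paper uses to obtain uniformity in $t$ is elementary but essential and is absent from your proposal: one first proves the estimate on a fixed finite horizon $[0,T]$ with a prescribed tolerance, allotting a fraction $\varepsilon\,(t_{i+1}-t_i)/(\overline{c}_0(T+2))^M$ of the error to each subinterval and restarting each subinterval from the actual $L^2(\mathbb{P}^\gamma)$-distance achieved at its left endpoint, so that the errors accumulate to at most $\varepsilon/2$ over $[0,T]$ rather than growing with the number of steps; one then concatenates the construction over the intervals $[j,j+1]$, $j\geq0$, allotting the tolerance $\varepsilon/2^{j+1}$ to the $j$-th interval, so that the total error over $[0,\infty)$ is $\sum_{j\geq1}\varepsilon 2^{-j}=\varepsilon$, independent of $t$, and a single admissible $v\in\mathcal{U}$ is obtained by pasting. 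Without this geometric-tolerance concatenation (or an equivalent device) your bound $|x-x'|^2+Ct\,\rho(h)$ cannot be upgraded to the uniform statement (\ref{r50})(i), and consequently (\ref{r50})(ii), whose proof feeds on $(\mathbb{E}[L_s^\gamma|\Delta X_s|^2])^{1/2}$, is not established either.
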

\begin{proof}
We fix arbitrarily $(x,x')\in \overline{\theta}^2$, $\lambda>0$, $T>0$, $\varepsilon>0$, and $u\in\mathcal{U}$. Without loss of generality, let us suppose that $u$ is a step process, i.e., that there exists a partition of $[0,T ]$, denoted by $0=t_0<t_1<t_2<\cdot\cdot\cdot<t_M=T$, and random variables $u_i\in L^0(\mathcal{F}_{t_i}; U)$, $0\leq i\leq M-1$, such that
\begin{equation*}
u=\sum^{M-1}_{i=0}u_i1_{(t_i,t_{i+1}]}.
\end{equation*}
The reader can be referred to \cite{Krylov 1999} for further details. Indeed, we can make this choice, since these step functions are dense in the space of admissible controls $\mathcal{U}$ endowed with the metric $\displaystyle(\mathbb{E}[\int_0^\infty e^{-t}d(u_t,u'_t)^2dt])^{\frac{1}{2}},\ u,\ u'\in \mathcal{U}$, and the controlled state process $X^{x,u}$ as well as the solution $(\overline{Y}^{\lambda,x,u},\overline{Z}^{\lambda,x,u})$ of the BSDE (\ref{r40}) are $L^2$-continuous in $u\in \mathcal{U}$.
Now we introduce the set-valued function
\begin{equation*}
\begin{split}
  \overline{\theta}^2\times U\ni(x,x',u)\rightsquigarrow\Xi(x,x',u):=&\{v\in U:g(x,x',u,v)\leq0,\ \widetilde{\psi}(x,x',z,u,v)\leq0,\ \text{for\ all}\ z\in\mathbb{R}^d\}.
  \end{split}
\end{equation*}
From the fact that $\Xi$ is upper semicontinuous and  has nonempty compact values we know that there exists a Borel function (see Aubin and Frankowska \cite{Aubin 1990})
\begin{equation*}
  \widehat{v}:\overline{\theta}^2\times U\to U,\ \text{with}\ \widehat{v}(x,x',u)\in\Xi(x,x',u),\ \text{for\ all}\ (x,x',u)\in\overline{\theta}^2\times U.
\end{equation*}
\textbf{Step1.} On $[0,t_1]$, setting $\tau_0=0$, we define $$v_t^{0,0}:=\widehat{v}(X_0^{x,u},x',u_t)=\widehat{v}(x,x',u_0)(=v_0^{0,0}),$$
 and
\begin{equation*}
\begin{split}
 \tau_1:=&\inf\{t\geq0:g(X_t^{x,u},X_t^{x',v^{0,0}},u_t,v_t^{0,0})>\delta\ \mbox{or} \sup\limits_{z\in\mathbb{R}^d}\widetilde{\psi}(X_t^{x,u},X_t^{x',v^{0,0}},z,u_t,v_t^{0,0})>\delta\}\wedge\frac{t_1}{n},\  n\geq1,
 \end{split}
\end{equation*}
where $\delta>0$ is arbitrarily small and will be specified later.
Similar to the proof of Lemma 3 in \cite{Buckdahn 2013}, from the assumption that the compact $\overline{\theta}$ is invariant with respect to control system (\ref{r2}) and from (\ref{r1}) we get for all $t\in[0,t_1]$,
\begin{equation*}
\begin{split}
  &g(X_t^{x,u},X_t^{x',v^{0,0}},u_t,v_t^{0,0})
  =\langle X_t^{x,u}-X_t^{x',v^{0,0}},b(X_t^{x,u},u_t)-b(X_t^{x',v^{0,0}},v_t^{0,0})\rangle\\
  &+\frac{1}{2}|\sigma(X_t^{x,u},u_t)-\sigma(X_t^{x',v^{0,0}},v_t^{0,0})|^2+K_z|\sigma(X_t^{x,u},u_t)-\sigma(X_t^{x',v^{0,0}},v_t^{0,0})||X_t^{x,u}-X_t^{x',v^{0,0}}|\\
  \leq&\langle x-x',b(x,u_0)-b(x',v_0^{0,0})\rangle+\frac{1}{2}|\sigma(x,u_0)-\sigma(x',v_0^{0,0})|^2+K_z|\sigma(x,u_0)-\sigma(x',v_0^{0,0})||x-x'|\\
  &+c(|x-X_t^{x,u}|+|x'-X_t^{x',v^{0,0}}|)\\
  =&g(x,x',u_0,v_0^{0,0})+c(|x-X_t^{x,u}|+|x'-X_t^{x',v^{0,0}}|),
  \end{split}
\end{equation*}
and
\begin{equation*}
\begin{split}
&\widetilde{\psi}(X_t^{x,u},X_t^{x',v^{0,0}},\overline{Z}_t^{\lambda,x,u},u_t,v_t^{0,0})\\
=&|\psi(X_t^{x,u},\overline{Z}_t^{\lambda,x,u},u_t)-\psi(X_t^{x',v^{0,0}},\overline{Z}_t^{\lambda,x,u},v_t^{0,0})|-\overline{c}_0| X_t^{x,u}-X_t^{x',v^{0,0}}|\\
\leq&|\psi(x,\overline{Z}_t^{\lambda,x,u},u_0)-\psi(x',\overline{Z}_t^{\lambda,x,u},v_0^{0,0})|-\overline{c}_0|x-x'|+c(|X_t^{x,u}-x|+| X_t^{x',v^{0,0}}-x'|)\\
=&\widetilde{\psi}(x,x',\overline{Z}_t^{\lambda,x,u},u_0,v_0^{0,0})+c(|X_t^{x,u}-x|+|X_t^{x',v^{0,0}}-x'|),
\end{split}
\end{equation*}
for some constant $c$ depending on the coefficients $\sigma,b,\psi$ and on $\overline{\theta}$.

Then, from the choice of $v^{0,0}$ we have that
 \begin{equation*}
   g(X_t^{x,u},X_t^{x',v^{0,0}},u_t,v_t^{0,0})\leq c(|x-X_t^{x,u}|+|x'-X_t^{x',v^{0,0}}|),
 \end{equation*}
and
\begin{equation*}
  \widetilde{\psi}(X_t^{x,u},X_t^{x',v^{0,0}},\overline{Z}_t^{\lambda,x,u},u_t,v_t^{0,0})\leq c(|X_t^{x,u}-x|+|X_t^{x',v^{0,0}}-x'|),\  t\in[0,t_1].
\end{equation*}
 Thus, applying Markov's inequality and Burkholder's inequality, we have that, for all $p>1,\ n\geq1$, there is a constant $c_p>0$ such that
 \begin{equation}\label{r88}
 \begin{split}
   &\mathbb{P}(\tau_1<\frac{t_1}{n})\leq\mathbb{P}(\sup\limits_{t\in[0,\frac{t_1}{n}]}c(|x'-X_t^{x',v^{0,0}}|+|x-X_t^{x,u}|)\geq\delta)\\
   &\leq\frac{c}{\delta^{4p}}(\mathbb{E}[\sup\limits_{t\in[0,\frac{t_1}{n}]}|x'-X_t^{x',v^{0,0}}|^{4p}]+\mathbb{E}[\sup\limits_{t\in[0,\frac{t_1}{n}]}|x-X_t^{x,u}|^{4p}])\leq\frac{c^2_pt_1^{2p}}{\delta^{4p}n^{2p}}.
 \end{split}
 \end{equation}
 Recalling the definition of $L^\gamma$, we conclude that
 \begin{equation}\label{r89}
 \begin{split}
   \sup\limits_{|\gamma|\leq K_z}\mathbb{E}[L_{\frac{t_1}{n}}^{\gamma}1_{\{\tau_1<\frac{t_1}{n}\}}]\leq \sup\limits_{|\gamma|\leq K_z}\big(\mathbb{E}[(L_{\frac{t_1}{n}}^{\gamma})^2]\big)^\frac{1}{2}\big(\mathbb{P}(\tau_1<\frac{t_1}{n})\big)^\frac{1}{2}\leq e^{\frac{1}{2}K_z^2\frac{t_1}{n}}\frac{c_pt_1^p}{\delta^{2p}n^p}.
   \end{split}
 \end{equation}
For $1\leq i\leq n-1$, let us define iteratively $v^{0,i}$ and $\tau_{i+1}$. Given $\tau_i$ and $v^{0,i-1}\in\mathcal{U}$ we put
 \begin{equation*}
   v_t^{0,i}:=v_t^{0,i-1}1_{\{t\leq\tau_i\}}+\widehat{v}(X_{\tau_i}^{x,u},X_{\tau_i}^{x',v^{0,i-1}},u_t)1_{\{t>\tau_i\}},
 \end{equation*}
 and
 \begin{equation*}
 \begin{split}
   \tau_{i+1}:=&\inf\{t\geq\tau_{i}:g(X_t^{x,u},X_t^{x',v^{0,i}},u_t,v_t^{0,i})>\delta\ \mbox{or} \sup\limits_{z\in\mathbb{R}^d}\widetilde{\psi}(X_t^{x,u},X_t^{x',v^{0,i}},z,u_t,v_t^{0,i})>\delta\}\wedge\frac{(i+1)t_1}{n}.
   \end{split}
 \end{equation*}
 From the strong Markov property we have, in analogy to (\ref{r88}),
 \begin{equation*}
   \mathbb{P}(\tau_{i+1}-\tau_i<\frac{t_1}{n}/X_{\tau_i}^{x,u}=\widehat{x},X_{\tau_i}^{x',v^{0,i}}=\widehat{x'},\tau_i=\widehat{t}\ )\leq\frac{c^2_pt_1^{2p}}{\delta^{4p}n^{2p}},\ (\widehat{x},\widehat{x}')\in\overline{\theta}^2,\ \widehat{t}\in[0,\frac{it_1}{n}],
 \end{equation*}
 and, thus,
 \begin{equation*}
   \mathbb{P}(\tau_{i+1}-\tau_i<\frac{t_1}{n})\leq\frac{c^2_pt_1^{2p}}{\delta^{4p}n^{2p}}.
 \end{equation*}
Moreover, similar to (\ref{r89}) we get
 \begin{equation*}
 \begin{split}
    \sup\limits_{|\gamma|\leq K_z}\mathbb{E}[L_{t_1}^{\gamma}1_{\{\tau_{i+1}-\tau_i<\frac{t_1}{n}\}}]\leq \sup\limits_{|\gamma|\leq K_z}\big(\mathbb{E}[(L_{t_1}^{\gamma})^2]\big)^\frac{1}{2}\big(\mathbb{P}(\tau_{i+1}-\tau_i<\frac{t_1}{n})\big)^\frac{1}{2}\leq e^{\frac{1}{2}K_z^2t_1}\frac{c_pt_1^p}{\delta^{2p}n^p}.
   \end{split}
 \end{equation*}
This shows that there exists a constant $\overline{c}_p>0$ such that
\begin{equation}\label{112}
 \sup\limits_{|\gamma|\leq K_z}\mathbb{E}[L_{t_1}^{\gamma}1_{\{\tau_n<t_1\}}]\leq \sum_{i=0}^{n-1}\sup\limits_{|\gamma|\leq K_z}\mathbb{E}[L_{t_1}^{\gamma}1_{\{\tau_{i+1}-\tau_i<\frac{t_1}{n}\}}]\leq e^{\frac{1}{2}K_z^2t_1}\frac{\overline{c}_pt_1^p}{\delta^{2p}n^{p-1}}.
\end{equation}
Let $d\mathbb{P}^\gamma_{t_1}=L_{t_1}^\gamma d\mathbb{P}$, and recall that due to the Girsanov Theorem $\displaystyle W_t^\gamma=W_t-\int_0^t\gamma_sds,\  t\in[0,t_1],$ is an $(\mathbb{F},\mathbb{P}_{t_1}^\gamma)$-Brownian motion.
Let us define $\displaystyle\mathbb{E}_{t_1}^\gamma[\cdot]=\int_\Omega(\cdot) d\mathbb{P}_{t_1}^\gamma=\mathbb{E}[L_{t_1}^\gamma(\cdot)]$. Applying It\^{o}'s formula to $|X_t^{x,u}-X_t^{x',v^{0,n}}|^2$, for all $t\leq t_1$ we have
\begin{eqnarray}\label{r41}
\begin{split}
  \mathbb{E}_{t_1}^\gamma[|X_t^{x,u}-X_t^{x',v^{0,n}}|^2] =&|x-x'|^2 +2\mathbb{E}_{t_1}^\gamma\Big[\int_0^t\Big(\langle X_s^{x,u}-X_s^{x',v^{0,n}},b(X_s^{x,u},u_s)-b(X_s^{x',v^{0,n}},v_s^{0,n})\rangle\\
   &+ \frac{1}{2}|\sigma(X_s^{x,u},u_s)-\sigma(X_s^{x',v^{0,n}},v_s^{0,n})|^2\Big) ds \Big] \\ +&2\mathbb{E}_{t_1}^\gamma\Big[\int_0^t(X_s^{x,u}-X_s^{x',v^{0,n}})(\sigma(X_s^{x,u},u_s)-\sigma(X_s^{x',v^{0,n}},v_s^{0,n}))dW_s].
   \end{split}
\end{eqnarray}
Thus, substituting $dW_s=dW_s^\gamma+\gamma_sds$ and taking into account that $|\gamma_s|\leq K_z$, dsd$\mathbb{P}$-a.e., we obtain
\begin{equation}\label{r91}
\begin{split}
  &\mathbb{E}_{t_1}^\gamma[|X_t^{x,u}-X_t^{x',v^{0,n}}|^2]
  \leq|x-x'|^2+2\mathbb{E}_{t_1}^\gamma\Big[\int_0^t\Big(\langle X_s^{x,u}-X_s^{x',v^{0,n}},b(X_s^{x,u},u_s)-b(X_s^{x',v^{0,n}},v_s^{0,n})\rangle\\
   &+\frac{1}{2}|\sigma(X_s^{x,u},u_s)-\sigma(X_s^{x',v^{0,n}},v_s^{0,n})|^2+K_z|\sigma(X_s^{x,u},u_s)-\sigma(X_s^{x',v^{0,0}},v_s^{0,0})||X_s^{x,u}-X_s^{x',v^{0,0}}|\Big) ds \Big]\\
   =&|x-x'|^2+2\mathbb{E}_{t_1}^\gamma[\int_0^tg(X_s^{x,u},X_s^{x',v^{0,n}},u_s,v_s^{0,n})ds]\\
   \leq& |x-x'|^2+2\mathbb{E}_{t_1}^\gamma[ct1_{\{t>\tau_n\}}+t\delta 1_{\{t\leq\tau_n\}}]\\
   \leq& |x-x'|^2+\frac{ct_1^{p+1}}{\delta^{2p}n^{p-1}}e^{K_z^2t_1}+ct_1\delta,\ t\in[0,t_1].
   \end{split}
\end{equation}
For this we have used the definition of $\tau_n$ and the boundedness of $g$ over $\overline{\theta}\times\overline{\theta}\times U\times U$.
Consequently,
\begin{equation}\label{r90}
\begin{split}
  &\lambda\int_0^{t_1}e^{-\lambda s}\mathbb{E}[L_s^\gamma|\psi(X_s^{x,u},\overline{Z}_s^{\lambda,x,u},u_s)-\psi(X_s^{x',v^{0,n}},\overline{Z}_s^{\lambda,x,u},v_s^{0,n})|]ds\\
  =&\lambda\int_0^{t_1}e^{-\lambda s}\mathbb{E}_{t_1}^\gamma[\widetilde{\psi}(X_s^{x,u},X_s^{x',v^{0,n}},\overline{Z}_s^{\lambda,x,u},u_s,v_s^{0,n})]ds+\lambda\int_0^{t_1}e^{-\lambda s}\mathbb{E}_{t_1}^\gamma[\overline{c}_0|X_s^{x,u}-X_s^{x',v^{0,n}}|]ds\\
  \leq&\lambda\int_0^{t_1}e^{-\lambda s}\mathbb{E}_{t_1}^\gamma[\delta1_{\{\tau_n\geq s\}}+\widetilde{\psi}(X_s^{x,u},X_s^{x',v^{0,n}},\overline{Z}_s^{\lambda,x,u},u_s,v_s^{0,n})1_{\{\tau_n<s\}}]ds\\
  &+\overline{c}_0\lambda\int_0^{t_1}e^{-\lambda s}(| x-x'|+\frac{ct_1^{\frac{p+1}{2}}}{\delta^pn^{\frac{p-1}{2}}}e^{\frac{1}{2}K_z^2t_1}+(ct_1\delta)^{\frac{1}{2}})ds.
\end{split}
\end{equation}
We remark that
\begin{equation*}
  |\psi(X_s^{x,u},\overline{Z}_s^{\lambda,x,u},u_s)|\leq|\psi(X_s^{x,u},0,u_s)|+K_z|\overline{Z}_s^{\lambda,x,u}|\leq M+K_z| \overline{Z}_s^{\lambda,x,u}|,
\end{equation*}
and as the same estimate holds true for $\psi(X_s^{x',v^{0,n}},\overline{Z}_s^{\lambda,x,u},v^{0,n}_s)$, we have
\begin{equation*}
  \widetilde{\psi}(X_s^{x,u},X_s^{x',v^{0,n}},\overline{Z}_s^{\lambda,x,u},u_s,v_s^{0,n})\leq2M+2K_z|\overline{Z}_s^{\lambda,x,u}|,\ s\in[0,t_1].
\end{equation*}
Thus,
\begin{equation*}
\begin{array}{lll}
  &\displaystyle\lambda\int_0^{t_1}e^{-\lambda s}\mathbb{E}_{t_1}^\gamma[\widetilde{\psi}(X_s^{x,u},X_s^{x',v^{0,n}},\overline{Z}_s^{\lambda,x,u},u_s,v_s^{0,n})1_{\{\tau_n<s\}}]ds\\
  &\displaystyle\leq2M\mathbb{E}_{t_1}^\gamma[1_{\{\tau_n<t_1\}}]\lambda\int_0^{t_1}e^{-\lambda s}ds+2K_z\lambda\int_0^{t_1}e^{-\lambda s}\mathbb{E}_{t_1}^\gamma[|\overline{Z}_s^{\lambda,x,u}|1_{\{\tau_n<s\}}]ds,
 \end{array}
\end{equation*}
where
\begin{equation*}
\lambda\int_0^{t_1}e^{-\lambda s}\mathbb{E}_{t_1}^\gamma[|\overline{Z}_s^{\lambda,x,u}|1_{\{\tau_n<s\}}]ds\leq \lambda(\mathbb{E}[\int_0^{t_1}|e^{-\lambda s}\overline{Z}_s^{\lambda,x,u}|^2ds])^{\frac{1}{2}}\cdot \sqrt{t_1}\cdot(\mathbb{E}[(L_{t_1}^\gamma)^4])^{\frac{1}{4}}(P\{\tau_n<t_1\})^{\frac{1}{4}}.
\end{equation*}
Recall that
\begin{equation*}
\mathbb{E}[\int_0^{t_1}|e^{-\lambda s}\overline{Z}_s^{\lambda,x,u}|^2ds]\leq 2(\frac{M}{\lambda})^2(2+\frac{K_z^2}{\lambda}),
\end{equation*}
and observe that
\begin{equation*}
 (\mathbb{E}[(L_{t_1}^\gamma)^4])^{\frac{1}{4}}\leq e^{2K_z^2t_1},
\end{equation*}
and
\begin{equation*}
 \mathbb{P}\{\tau_n<t_1\}\leq\sum\limits_{i=1}^n\mathbb{P}\{\tau_i-\tau_{i-1}<\frac{t_1}{n}\}\leq\frac{C_p^2t_1^{2p}}{\delta^{4p}n^{2p-1}}.
\end{equation*}
Hence,
\begin{equation*}
  \lambda\int_0^{t_1}e^{-\lambda s}\mathbb{E}_{t_1}^\gamma[|\overline{Z}_s^{\lambda,x,u}|1_{\{\tau_n<s\}}]ds\leq C_{M,\lambda}e^{2K_z^2t_1}\frac{C_p^{\frac{1}{2}}t_1^{\frac{p+1}{2}}}{\delta^{p}n^{\frac{p-1}{2}}},
\end{equation*}
and supposing without loss of generality that $\delta\in(0,1),\ K_z\geq1$ and $t_1(=t_1-t_0)\leq1$, we get from (\ref{r90}), (\ref{112}) and the above estimates,
\begin{equation*}
\begin{split}
  &\lambda\int_0^{t_1}e^{-\lambda s}\mathbb{E}[L_s^\gamma| \psi(X_s^{x,u},\overline{Z}_s^{\lambda,x,u},u_s)-\psi(X_s^{x',v^{0,n}},\overline{Z}_s^{\lambda,x,u},v_s^{0,n})|]ds\\
  \leq& \lambda\int_0^{t_1}e^{-\lambda s}ds\cdot(\overline{c}_0| x-x'|+c\delta^{\frac{1}{2}}+c_p\frac{t_1^{\frac{p}{2}}}{\delta^{p}n^{\frac{p-1}{2}}}e^{2K_z^2t_1}).
  \end{split}
\end{equation*}
Recall that $\delta\in(0,1)$ is arbitrary. Thus, choosing $\delta>0$ sufficiently small and $n$ large enough, we have for $v^0:=v^{0,n}\in\mathcal{U}$
\begin{equation}\label{t2}
\begin{array}{lll}
&{\rm (i)}\ (\mathbb{E}[L_t^\gamma|X_t^{x,u}-X_t^{x',v^{0}}|^2])^{\frac{1}{2}}\leq | x-x'|+\varepsilon\frac{t_1}{(\overline{c}_0(T+2))^M},\ t\in[0,t_1];\\
 &{\rm (ii)}\ \displaystyle\lambda \int_0^{t_1}e^{-\lambda s}\mathbb{E}[L_s^\gamma|\psi(X_s^{x,u},\overline{Z}_s^{\lambda,x,u},u_s)-\psi(X_s^{x',v^{0}},\overline{Z}_s^{\lambda,x,u},v_s^{0})|]ds\\
&\displaystyle\ \ \ \ \ \ \ \leq\lambda\int_0^{t_1}e^{-\lambda s}ds\cdot\overline{c}_0(|x-x'|+\frac{\varepsilon}{(\overline{c}_0(T+2))^M}),
\end{array}
\end{equation}
for all $\gamma\in L_{\mathbb{F}}^\infty(0,\infty;\mathbb{R}^d)$ with $|\gamma_s|\leq K_z$, dsd$\mathbb{P}$-a.e.\\
\noindent\textbf{Step 2.} We consider the interval $[0,t_2]$: Starting now from $(X_{t_1}^{x,u},X_{t_1}^{x',v^0})$ at time $t_1$, and with $u=u_{t_1}$ on $[t_1,t_2]$, we construct $v^1$. We begin with putting\\ $v_t^{1,0}:=v_t^01_{[0,t_1)}(t)+\widehat{v}(X_{t_1}^{x,u},X_{t_1}^{x',v^0},u_t)1_{[t_1,t_2]}(t)
=v_t^01_{[0,t_1)}(t)+\widehat{v}(X_{t_1}^{x,u},X_{t_1}^{x',v^0},u_{t_1})1_{[t_1,t_2]}(t), t\in[0,t_2]$. Similar to Step 1, we construct a sequence of control processes $(v^{1,n})_{n\geq0}$. Letting $n$ be large enough, there exists $v^1:=v^{1,n}$ such that, for all $\gamma\in L_{\mathbb{F}}^\infty(0,\infty;\mathbb{R}^d)$ with $|\gamma_s|\leq K_z$, dsd$\mathbb{P}$-a.e.,
\begin{equation}\label{t3}
(\mathbb{E}_{t_2}^\gamma[|X_t^{x,u}-X_t^{x',v^1}|^2\big|\mathcal{F}_{t_1}])^{\frac{1}{2}}=(\mathbb{E}[\frac{L_{t_2}^\gamma}{L_{t_1}^\gamma}\mid X_{t}^{x,u}-X_{t}^{x',v^1}|^2\big|\mathcal{F}_{t_1}])^{\frac{1}{2}}\leq|X_{t_1}^{x,u}-X_{t_1}^{x',v^0}|+\varepsilon\frac{t_2-t_1}{(\overline{c}_0(T+2))^M},
\end{equation}
for all $t\in[t_1,t_2]$, and
\begin{equation*}
\begin{split}
 & \lambda\int_{t_1}^{t_2} e^{-\lambda s}\mathbb{E}_s^\gamma[|\psi(X_s^{x,u},\overline{Z}_s^{\lambda,x,u},u_s)-\psi(X_s^{x',v^1},\overline{Z}_s^{\lambda,x,u},v_s^1)|\big|\mathcal{F}_{t_1}]ds\\
\leq&\lambda\int_{t_1}^{t_2} e^{-\lambda s}ds\cdot\overline{c}_0(| X_{t_1}^{x,u}-X_{t_1}^{x',v^0}|+\frac{\varepsilon}{(\overline{c}_0(T+2))^M}).
\end{split}
\end{equation*}
Then, from (\ref{t2}) and (\ref{t3}),
\begin{equation*}
\begin{split}
  &(\mathbb{E}_{t_2}^\gamma[|X_t^{x,u}-X_t^{x',v^1}|^2])^{\frac{1}{2}}=(\mathbb{E}_{t_1}^\gamma[((\mathbb{E}_{t_2}^\gamma[|X_t^{x,u}-X_t^{x',v^1}|^2\big|\mathcal{F}_{t_1}])^\frac{1}{2})^2])^\frac{1}{2}\\
  & \leq (\mathbb{E}_{t_1}^\gamma[(|X_{t_1}^{x,u}-X_{t_1}^{x',v^0}|+\varepsilon\frac{t_2-t_1}{(\overline{c}_0(T+2))^M})^2])^\frac{1}{2}\\
  & \leq (\mathbb{E}_{t_1}^\gamma[|X_{t_1}^{x,u}-X_{t_1}^{x',v^0}|^2])^\frac{1}{2}+\varepsilon\frac{t_2-t_1}{(\overline{c}_0(T+2))^M}\\
  & \leq (|x-x'|+\varepsilon\frac{t_1}{(\overline{c}_0(T+2))^M})+\varepsilon\frac{t_2-t_1}{(\overline{c}_0(T+2))^M}.
\end{split}
\end{equation*}
This combined once more with the result (\ref{t2}) of Step 1 yields
\begin{equation*}
  \sup\limits_{|\gamma|\leq K_z}(\mathbb{E}_t^\gamma[|X_t^{x,u}-X_t^{x',v^1}|^2)^{\frac{1}{2}}\leq |x-x'|+\varepsilon\frac{t_2}{(\overline{c}_0(T+2))^M},\ t\in[0,t_2].
\end{equation*}
On the other hand, arguing similarly with using (\ref{t2}) and (\ref{t3}), we get
\begin{equation*}
\begin{split}
& \lambda\int_{t_1}^{t_2} e^{-\lambda s}\mathbb{E}_s^\gamma[|\psi(X_s^{x,u},\overline{Z}_s^{\lambda,x,u},u_s)-\psi(X_s^{x',v^1},\overline{Z}_s^{\lambda,x,u},v_s^1)|]ds\\
\leq&\lambda\int_{t_1}^{t_2} e^{-\lambda s}ds\cdot\overline{c}_0((\mathbb{E}_{t_1}^\gamma[| X_{t_1}^{x,u}-X_{t_1}^{x',v^1}|^2])^{\frac{1}{2}}+\frac{\varepsilon}{(\overline{c}_0(T+2))^M})\\
\leq& \lambda\int_{t_1}^{t_2} e^{-\lambda s}ds(\overline{c}_0| x-x'|+\frac{\overline{c}_0(t_1+1)\varepsilon}{(\overline{c}_0(T+2))^{M}}),
\end{split}
\end{equation*}
which combined with the corresponding estimate (\ref{t2}) of Step 1 yields
\begin{equation*}
\begin{split}
 & \lambda\int_0^{t_2}e^{-\lambda s}\mathbb{E}_s^\gamma[|\psi(X_s^{x,u},\overline{Z}_s^{\lambda,x,u},u_s)-\psi(X_s^{x',v^1},\overline{Z}_s^{\lambda,x,u},v_s^1)|]ds\\
 \leq&\lambda\int_0^{t_2}e^{-\lambda s}ds(\overline{c}_0|x-x'|+\frac{\overline{c}_0(t_1+2)\varepsilon}{(\overline{c}_0(T+2))^M})\\
 \leq&\lambda\int_0^{t_2}e^{-\lambda s}ds(\overline{c}_0|x-x'|+\frac{\varepsilon}{(\overline{c}_0(T+2))^{M-1}}),
\end{split}
\end{equation*}
for all $\gamma\in L_{\mathbb{F}}^\infty(0,\infty;\mathbb{R}^d)$ with $|\gamma_s|\leq K_z$, dsd$\mathbb{P}$-a.e.

Similarly, we make our construction on $[t_2,t_3]$, $[t_3,t_4]$, $\cdots$, $[t_{M-1},t_M]$, to finally get a process $v^{M-1}$ defined on $[0,T]$, such that
\begin{equation*}
\left\{
\begin{split}
  &\mbox{(i)}\ \sup\limits_{|\gamma|\leq K_z}(\mathbb{E}_t^\gamma[|X_t^{x,u}-X_t^{x',v^{M-1}}|^2])^{\frac{1}{2}}\leq |x-x'|+\varepsilon\frac{T}{(\overline{c}_0(T+2))^M}\leq|x-x'|+\frac{\varepsilon}{2},\ t\in[0,T];\\
  &\mbox{(ii)}\displaystyle\  \lambda\int_0^{T}e^{-\lambda s}\mathbb{E}_s^\gamma[|\psi(X_s^{x,u},\overline{Z}_s^{\lambda,x,u},u_s)-\psi(X_s^{x',v^{M-1}},\overline{Z}_s^{\lambda,x,u},v_s^{M-1})|]ds\\
  &\ \ \ \ \ \ \leq\displaystyle \lambda\int_0^{T}e^{-\lambda s}ds(\overline{c}_0|x-x'|+\frac{\varepsilon}{2}),
\end{split}
\right.
\end{equation*}
for all $\gamma\in L_{\mathbb{F}}^\infty(0,\infty;\mathbb{R}^d)$ with $|\gamma_s|\leq K_z$, dsd$\mathbb{P}$-a.e. Here we have supposed without loss of generality that $\overline{c}_0\geq1$.
Let now $T=1, \rho=\rho_1=\frac{1}{2}, \widetilde{v}^1:=v^{M-1}$; we can make the same construction on $[1,2]$, starting with $(X_1^{x,u},X_1^{x',v^{M-1}},u_1)$, but now for $\frac{\varepsilon}{4}.$ Thus, we get $v^2$ on $[1,2]$, $\widetilde{v}^2:=\widetilde{v}^11_{[0,1]}+v^21_{[1,2]}$. Similarly, by iteration, for $\frac{\varepsilon}{2^{j+1}}$, we make our construction on $[j,j+1], j\geq2$. Then we get the construction of $v\in\mathcal{U}$ such that
\begin{equation*}
\begin{array}{lll}
 &\displaystyle {\rm (i)}\ (\mathbb{E}_t^\gamma[|X_t^{x,u}-X_t^{x',v}|^2])^{\frac{1}{2}}\leq |x-x'|+\varepsilon(\sum_{j=1}^\infty\frac{1}{2^j})=|x-x'|+\varepsilon,\ t\geq0,\\
  &\displaystyle {\rm (ii)}\ \lambda\int_0^\infty e^{-\lambda s}\mathbb{E}_s^\gamma[|\psi(X_s^{x,u},\overline{Z}_s^{\lambda,x,u},u_s)-\psi(X_s^{x',v},\overline{Z}_s^{\lambda,x,u},v_s)|]ds\\
 &\displaystyle \ \ \ \  \leq\lambda\int_0^\infty e^{-\lambda s}ds(\overline{c}_0|x-x'|+\varepsilon)=\overline{c}_0|x-x'|+\varepsilon,
\end{array}
\end{equation*}
for all $\gamma\in L_{\mathbb{F}}^\infty(0,\infty;\mathbb{R}^d)$ with $|\gamma_s|\leq K_z$, dsd$\mathbb{P}$-a.e.
\end{proof}
\begin{lemma}\label{lem:2.6}
We suppose that (\ref{r1}), (\ref{r48}) and (\ref{r49}) hold. Then the family of functions $\{\lambda V_\lambda\}_\lambda$ is equicontinuous and equibounded on $\overline{\theta}$. Indeed, for the constants $\overline{c}_0>0$, $M>0$ defined in (\ref{r48}), it holds that, for all $\lambda>0$, and for all $ x,x'\in\overline{\theta},$
\begin{equation*}
\left\{
\begin{array}{ll}
{\rm{(i)}}\ |\lambda V_\lambda(x)-\lambda V_\lambda(x')|\leq \overline{c}_0|x-x'|, \\
{\rm{(ii)}}\ |\lambda V_\lambda(x)|\leq M.
\end{array}
\right.
\end{equation*}

\end{lemma}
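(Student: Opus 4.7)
Part (ii) is almost immediate: Proposition \ref{th:2.4} gives $|\overline{Y}_0^{\lambda,x,u}|\le M/\lambda$ for every admissible $u$, so taking the infimum over $u\in\mathcal{U}$ yields $-M\le\lambda V_\lambda(x)\le M$ for all $x\in\overline{\theta}$. The substantive work is in (i), and the plan is to use the stochastic nonexpansivity condition (\ref{r50}) to compare $\overline{Y}_0^{\lambda,x,u}$ with a suitably chosen $\overline{Y}_0^{\lambda,x',v}$.

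Fix $\varepsilon>0$, $x,x'\in\overline{\theta}$, and an arbitrary $u\in\mathcal{U}$. By Proposition \ref{p:2.1}, (H3) implies (\ref{r50}), so we may choose $v\in\mathcal{U}$ satisfying (H4). Writing $\widehat{Y}_t=\overline{Y}_t^{\lambda,x,u}-\overline{Y}_t^{\lambda,x',v}$ and $\widehat{Z}_t=\overline{Z}_t^{\lambda,x,u}-\overline{Z}_t^{\lambda,x',v}$, I split
\begin{equation*}
\psi(X_s^{x,u},\overline{Z}_s^{\lambda,x,u},u_s)-\psi(X_s^{x',v},\overline{Z}_s^{\lambda,x',v},v_s)=R_s+\widetilde{\gamma}_s\widehat{Z}_s,
\end{equation*}
where
\begin{equation*}
R_s:=\psi(X_s^{x,u},\overline{Z}_s^{\lambda,x,u},u_s)-\psi(X_s^{x',v},\overline{Z}_s^{\lambda,x,u},v_s)
\end{equation*}
is exactly the integrand appearing in (H4)(ii), and $\widetilde{\gamma}_s$ is the linearization of $\psi(X_s^{x',v},\cdot,v_s)$ in its $z$-argument, satisfying $|\widetilde{\gamma}_s|\le K_z$. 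The BSDE for $\widehat{Y}$ then becomes, on each $[0,T]$,
\begin{equation*}
\widehat{Y}_t=\widehat{Y}_T+\int_t^T(R_s-\lambda\widehat{Y}_s)ds-\int_t^T\widehat{Z}_s\,dW_s^{\widetilde{\gamma},T},
\end{equation*}
where $W^{\widetilde{\gamma},T}$ is Brownian motion under $d\mathbb{P}^{\widetilde{\gamma}}_T=L_T^{\widetilde{\gamma}}d\mathbb{P}$ (Girsanov).

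Applying It\^o to $e^{-\lambda t}\widehat{Y}_t$ and taking $\mathbb{P}^{\widetilde{\gamma}}_T$-expectation kills the stochastic integral (by the $\mathcal{H}_{loc}^2$-bound on $\widehat{Z}$ from Proposition \ref{th:2.4}), giving
\begin{equation*}
\widehat{Y}_0=\mathbb{E}^{\widetilde{\gamma}}_T[e^{-\lambda T}\widehat{Y}_T]+\mathbb{E}^{\widetilde{\gamma}}_T\!\left[\int_0^T e^{-\lambda s}R_s\,ds\right].
\end{equation*}
Since $|\widehat{Y}_T|\le 2M/\lambda$, the first term is bounded by $2Me^{-\lambda T}/\lambda\to 0$. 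For the second term I invoke the $\mathbb{P}$-martingale property of $L^{\widetilde{\gamma}}$ together with Fubini, yielding $\mathbb{E}^{\widetilde{\gamma}}_T[\int_0^T e^{-\lambda s}R_s\,ds]=\mathbb{E}[\int_0^T e^{-\lambda s}L_s^{\widetilde{\gamma}}R_s\,ds]$. Letting $T\to\infty$ and estimating in absolute value,
\begin{equation*}
\lambda|\widehat{Y}_0|\le\lambda\int_0^\infty e^{-\lambda s}\mathbb{E}[L_s^{\widetilde{\gamma}}|R_s|]\,ds\le \overline{c}_0|x-x'|+\varepsilon
\end{equation*}
by (H4)(ii) applied with $\gamma=\widetilde{\gamma}$.

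Consequently $\lambda\overline{Y}_0^{\lambda,x,u}\ge\lambda\overline{Y}_0^{\lambda,x',v}-\overline{c}_0|x-x'|-\varepsilon\ge\lambda V_\lambda(x')-\overline{c}_0|x-x'|-\varepsilon$; taking infimum over $u\in\mathcal{U}$ and then $\varepsilon\to 0$ gives $\lambda V_\lambda(x)\ge\lambda V_\lambda(x')-\overline{c}_0|x-x'|$. Interchanging the roles of $x$ and $x'$ in (H4) yields the reverse inequality, which proves (i). The main obstacle is the first step, namely, identifying the correct linearization direction: one must use $\widetilde{\gamma}$ coming from the $z$-dependence at $(X^{x',v},v)$ so that the residual $R_s$ matches precisely the integrand permitted by the stochastic nonexpansivity condition (\ref{r50})(ii); and one must be careful that the Girsanov change of measure, defined only on finite horizons, is combined with the bound $|\widehat{Y}_T|\le 2M/\lambda$ and the $L^2$-estimate on $e^{-\lambda s}\overline{Z}_s$ from Proposition \ref{th:2.4} before passing to the limit $T\to\infty$.
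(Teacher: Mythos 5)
Your proposal is correct and follows essentially the same route as the paper: linearize $\psi(X^{x',v},\cdot,v)$ in $z$ so that the residual is exactly the integrand of (H4)(ii), apply Girsanov on $[0,T]$, use $|\widehat{Y}_T|\le 2M/\lambda$ to kill the terminal term as $T\to\infty$, and invoke the stochastic nonexpansivity bound. The only cosmetic difference is that the paper first picks an $\varepsilon$-optimal control $u$ for $V_\lambda(x)$ before comparing, whereas you work with arbitrary $u$ and take the infimum at the end; both orderings yield the same Lipschitz estimate.
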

\begin{proof}
From Proposition \ref{th:2.4} we know that for all $t\geq0,\ \lambda>0$, $|\overline{Y}_t^{\lambda,x,u}|\leq\frac{M}{\lambda}$. Thus we have
\begin{equation*}
  |\lambda V_\lambda(x)|\leq\lambda\sup\limits_{u\in\mathcal{U}}|Y_0^{\lambda,x,u}|\leq M.
\end{equation*}
It remains to prove (i). Let $\lambda>0,\ x,\ x'\in\mathbb{R}^N$. For any $\varepsilon>0$, let  $u\in\mathcal{U}$ be such that
\begin{equation}\label{lg1}
V_\lambda(x)\geq \overline{Y}_0^{\lambda,x,u}-\frac{\varepsilon}{\lambda}.
\end{equation}
Then, we have from Proposition \ref{p:2.1} that, there is $v^\varepsilon\in\mathcal{U}$ such that (\ref{r50}) holds true.

Let us define $Y_s^\varepsilon=\overline{Y}_s^{\lambda,x,u}-\overline{Y}_s^{\lambda,x',v^\varepsilon}, Z_s^\varepsilon=\overline{Z}_s^{\lambda,x,u}-\overline{Z}_s^{\lambda,x',v^\varepsilon},$ and
\begin{equation*}
\gamma_s^\varepsilon=\frac{\psi(X_s^{x',v^\varepsilon},\overline{Z}_s^{\lambda,x,u},v_s^\varepsilon)-
\psi(X_s^{x',v^\varepsilon},\overline{Z}_s^{\lambda,x',v^\varepsilon},v_s^\varepsilon)}
{|\overline{Z}_s^{\lambda,x,u}-\overline{Z}_s^{\lambda,x',v^\varepsilon}|^2}
\cdot(\overline{Z}_s^{\lambda,x,u}-\overline{Z}_s^{\lambda,x',v^\varepsilon})^*,\ \mbox{if}\  \overline{Z}_s^{\lambda,x,u}\neq\overline{Z}_s^{\lambda,x',v^\varepsilon};
\end{equation*}
\noindent otherwise, $\gamma_s^\varepsilon=0,\  s\geq0$, where $(\overline{Y}^{\lambda,x,u},\overline{Z}^{\lambda,x,u})$ and $(\overline{Y}^{\lambda,x',v^\varepsilon},\overline{Z}^{\lambda,x',v^\varepsilon})$ are the solutions of BSDE (\ref{r40}) with the driving coefficient $\psi(X^{x,u},\cdot,u)$ and $\psi(X^{x',v^\varepsilon},\cdot,v^\varepsilon)$, respectively. We note that from (\ref{r48}) it follows that $|\gamma_s^\varepsilon|\leq K_z$. Putting
\begin{equation*}
  L_s^\varepsilon=\exp\{\int_0^s\gamma^\varepsilon_rdW_r-\frac{1}{2}\int_0^s|\gamma^\varepsilon_r|^2dr\},\ s\geq0,
\end{equation*}
we define probability measures $\mathbb{P}_s^\varepsilon$ on $(\Omega,\mathcal{F})$ by setting
\begin{equation*}
  \frac{d\mathbb{P}_s^\varepsilon}{d\mathbb{P}}=\exp\{\int_0^s\gamma^\varepsilon_rdW_r-\frac{1}{2}\int_0^s|\gamma^\varepsilon_r|^2dr\},\ s\geq0.
\end{equation*}
Then, it follows from Girsanov's theorem that
\begin{equation*}
  W^\varepsilon_t=W_t-\int_0^t\gamma^\varepsilon_rdr,\ t\in[0,s],
\end{equation*}
is an $(\mathbb{F,\mathbb{P}}_s^\varepsilon)$-Brownian motion. Then, for all $0\leq t\leq T<\infty$,
\begin{equation*}
\begin{split}
  Y_t^\varepsilon=&Y_T^\varepsilon-\lambda\int_t^TY_s^\varepsilon ds+\int_t^T(\psi(X_s^{x,u},\overline{Z}_s^{\lambda,x,u},u_s)-\psi(X_s^{x',v^\varepsilon},\overline{Z}_s^{\lambda,x',v^\varepsilon},v_s^\varepsilon))ds
  -\int_t^TZ_s^\varepsilon dW_s\\
  =& Y_T^\varepsilon-\lambda\int_t^TY_s^\varepsilon ds+\int_t^T(\psi(X_s^{x,u},\overline{Z}_s^{\lambda,x,u},u_s)-\psi(X_s^{x',v^\varepsilon},\overline{Z}_s^{\lambda,x,u},v_s^\varepsilon))ds
  -\int_t^TZ_s^\varepsilon dW^\varepsilon_s.
  \end{split}
\end{equation*}
By applying It\^{o}'s formula to $e^{-\lambda t}Y_t^\varepsilon$, and taking the conditional expectation $\mathbb{E}^\varepsilon_T[\cdot\big|\mathcal{F}_t]$ with respect to $\mathbb{P}^\varepsilon_T$, we obtain
\begin{equation*}
\begin{split}
  Y_t^\varepsilon=&\mathbb{E}_T^\varepsilon[\int_t^Te^{-\lambda (s-t)}\big(\psi(X_s^{x,u},\overline{Z}_s^{\lambda,x,u},u_s)-\psi(X_s^{x',v^\varepsilon},\overline{Z}_s^{\lambda,x,u},v_s^\varepsilon)\big)ds\big|\mathcal{F}_t]
  +\mathbb{E}_T^\varepsilon[e^{-\lambda(T-t)}Y_T^\varepsilon\big|\mathcal{F}_t].
\end{split}
\end{equation*}
Let $t=0$. Since $|Y_T^\varepsilon|=|\overline{Y}_T^{\lambda,x,u}-\overline{Y}_T^{\lambda,x',v^\varepsilon}|\leq \frac{2M}{\lambda}$, it follows that
\begin{equation*}
\begin{split}
  |Y_0^\varepsilon|\leq& \frac{2M}{\lambda}e^{-\lambda T}+\int_0^Te^{-\lambda s}\mathbb{E}[L_s^\varepsilon\mid\psi(X_s^{x,u},\overline{Z}_s^{\lambda,x,u},u_s)-\psi(X_s^{x',v^\varepsilon},\overline{Z}_s^{\lambda,x,u},v_s^\varepsilon)\mid]ds\\
  \leq& \frac{2M}{\lambda}e^{-\lambda T}+\frac{\overline{c}_0}{\lambda}(|x-x'|+\varepsilon),\ T\geq0.
\end{split}
\end{equation*}
Here we have used the fact that due to the choice of $v^\varepsilon$ (\ref{r50}) is satisfied.
Now letting $T$ tend to infinity we get
\begin{equation}\label{lg2}
  |\overline{Y}_0^{\lambda,x,u}-\overline{Y}_0^{\lambda,x',v^\varepsilon}|\leq \frac{\overline{c}_0}{\lambda}(|x-x'|+\varepsilon).
\end{equation}
Finally, from the arbitrariness of $u\in\mathcal{U}$ and $\varepsilon>0$ it follows that
\begin{equation*}
  |\lambda V_\lambda(x)-\lambda V_\lambda(x')|\leq \overline{c}_0|x-x'|.
\end{equation*}
\noindent Indeed, from (\ref{lg1}) and (\ref{lg2}) we have
\begin{equation*}
  \lambda V_\lambda(x)-\lambda V_\lambda(x')\geq\lambda(\overline{Y}_0^{\lambda,x,u}-\overline{Y}_0^{\lambda,x',v^\varepsilon})-\varepsilon\geq-\overline{c}_0|x-x'|-(\overline{c}_0+1)\varepsilon,
\end{equation*}
and letting $\varepsilon\downarrow0$ yields $\lambda V_\lambda(x)-\lambda V_\lambda(x')\geq-\overline{c}_0|x-x'|$. The symmetry of the argument in $x$ and $x'$ gives the inverse inequality.
The proof is complete.
\end{proof}

\section{ {\protect \large Hamilton-Jacobi-Bellman equations}}
Before we study in the next section the HJB equations associated with the stochastic control problem (\ref{r94}), let us begin a more general discussion in this section, where we consider a Hamiltonian $H:\mathbb{R}^N\times\mathbb{R}^N\times\mathcal{S}^N\to\mathbb{R}$ not necessarily related with a stochastic control problem. By $\mathcal{S}^N$ we denote the set of symmetric $N\times N$ matrices.

Let $H:\mathbb{R}^N\times\mathbb{R}^N\times\mathcal{S}^N\to\mathbb{R}$ be a uniformly continuous function satisfying the monotonicity assumption:\\
\noindent\textbf{($A_H)$} (i) $H(x,p,A)\leq H(x,p,B),\ \text{for\ all}\ (x,p)\in\mathbb{R}^N\times\mathbb{R}^N,\ A,\ B\in\mathcal{S}^N \mbox{with}\ B\leq A$ (i.e., $A-B$ is positive semidefinite).\\

We consider the PDE
\begin{equation}\label{r3.1}
\lambda V(x)+H(x,DV(x),D^2V(x))=0,\ x\in\overline{\theta}.
\end{equation}
Let $V:\overline{\theta}\to\mathbb{R}$ be a bounded measurable function. We define
\begin{equation*}
  V^*(x)=\varlimsup\limits_{y\to x}V(y),\  V_*(x)=\varliminf\limits_{y\to x}V(y),\ x\in\overline{\theta}.
\end{equation*}
Then, $V^*:\overline{\theta}\to\mathbb{R}$ is upper semicontinuous (we write $V^*\in \mbox{USC}(\overline{\theta})$) and $V_*:\overline{\theta}\to\mathbb{R}$ is lower semicontinuous ($V_*\in \mbox{LSC}(\overline{\theta})$).
\begin{definition}\label{def3.1}
$V$ is a constrained viscosity solution of (\ref{r3.1}), if it solves
\begin{equation*}
  \lambda V(x)+H(x,DV(x),D^2V(x))=0,\ x\in\theta,
\end{equation*}
\begin{equation*}
  \lambda V(x)+H(x,DV(x),D^2V(x))\geq0,\ x\in\partial\theta,
\end{equation*}
in viscosity sense, i.e., if\\
\rm{i)} \emph{$V$ is a viscosity subsolution of (\ref{r3.1}) on $\theta$, and}\\
\rm{ii)} \emph{$V$ is a viscosity supersolution of (\ref{r3.1}) on $\overline{\theta}$}.
\end{definition}
\begin{remark}
Recall that\\
\rm{i)} \emph{$V$ is a viscosity subsolution of (\ref{r3.1}) on $\theta$, if for all $x\in\theta$ and all $\varphi\in C^2(\mathbb{R}^N)$ such that $V^*-\varphi$ achieves a local maximum on $\theta$ at $x$, it holds}
\begin{equation*}
  \lambda V^*(x)+H(x,D\varphi(x),D^2\varphi(x))\leq0;
\end{equation*}
\rm{ii)} \emph{$V$ is a viscosity supersolution of (\ref{r3.1}) on $\overline{\theta}$, if for all $x\in\overline{\theta}$ and all $\varphi\in C^2(\mathbb{R}^N)$ such that
$V_*-\varphi$ achieves a local minimum on $\overline{\theta}$ at $x$, it holds}
\begin{equation*}
  \lambda V_*(x)+H(x,D\varphi(x),D^2\varphi(x))\geq0.
\end{equation*}
\emph{The reader can refer to Crandall, Ishii, Lions \cite{ISHII 1992}.}
\end{remark}
Existence and comparison results for the viscosity solution of (\ref{r3.1}) have been established (see Theorem 2.1, 2.2 and 3.1 in Katsoulakis \cite{Katsoulakis 1994}) under the additional assumptions:\\
\noindent\textbf{($A_\theta)$}  There exists a bounded, uniformly continuous function $m:\overline{\theta}\rightarrow \mathbb{R}^N$ with $|m|\leq 1$ and a\\
\indent\ constant $r>0$ such that\\
\centerline{$B(x+sm(x),rs)\subset \theta , \ \text{for\ all}\ x\in \overline{\theta},\ s\in(0,r].$}
\indent\ $B(x,s)\subset\mathbb{R}^N$ denotes the open ball with center at $x$ and radius $s$.\\
\noindent\textbf{($A_H)$} (ii) There is a continuity modulus $\rho: \mathbb{R}_+\to\mathbb{R}_+$, $\rho(0+)=0$, such that,\\
\indent\qquad\ $|H(x,p,A)-H(x,q,B)|\leq\rho(|p-q|+|A-B|),\ x,\ p,\ q\in\mathbb{R}^N, A,\ B\in\mathcal{S}^N,\ \mbox{and}$\\
\indent \qquad\ $|H(y,p,B)-H(x,p,A)|\leq\rho(\frac{1}{\varepsilon}|x-y|^2+|x-y|(|p|+1)),\ \mbox{for\ all}\ x,\ y,\ p\in\mathbb{R}^N,\ \varepsilon>0,$ \indent \qquad\ $A,\ B\in\mathcal{S}^N,$ such that
$$-\frac{3}{\varepsilon}\begin{pmatrix}
I&0\\
0&I
\end{pmatrix}\leq \begin{pmatrix}
A&0\\
0&B
\end{pmatrix}\leq \frac{3}{\varepsilon}\begin{pmatrix}
I&-I\\
-I&I
\end{pmatrix},$$
\indent \qquad\ where $I\in\mathbb{R}^{N\times N}$ denotes the unit matrix in $\mathbb{R}^{N\times N}$.\\

Under the above assumptions Katsoulakis \cite{Katsoulakis 1994} has shown the following theorems:
\begin{theorem}\label{th1}(Comparison principle; Theorem 2.2 in \cite{Katsoulakis 1994})
Let $u\in USC(\overline{\theta})$ be a subsolution of (\ref{r3.1}) on $\theta$ and $v\in LSC(\overline{\theta})$ a supersolution of (\ref{r3.1}) on $\overline{\theta}$. Then $u\leq v$ on $\overline{\theta}$.
\end{theorem}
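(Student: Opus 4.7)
The plan is to argue by contradiction using the doubling-of-variables technique, but with a modified penalty term that exploits the inward-pointing vector field in $(A_\theta)$ in order to keep the ``subsolution'' point away from $\partial\theta$. Suppose, contrary to the claim, that $M:=\sup_{\overline{\theta}}(u-v)>0$, and let $\hat{x}\in\overline{\theta}$ be a maximizer (which exists since $u-v\in \mbox{USC}(\overline{\theta})$ and $\overline{\theta}$ is compact). The difficulty is that the subsolution inequality holds only on the open set $\theta$: if $\hat{x}\in\partial\theta$, the naive doubling with $|x-y|^2/(2\varepsilon)$ will produce a max-point $x_\varepsilon\in\partial\theta$ where we cannot test $u$.

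To overcome this I would consider, for small parameters $\varepsilon,\beta>0$,
\begin{equation*}
\Phi_{\varepsilon,\beta}(x,y)\;=\;u(x)-v(y)-\tfrac{1}{2\varepsilon}|x-y-\beta m(y)|^{2}-\beta |y-\hat{x}|^{2},
\end{equation*}
and take a maximizer $(x_{\varepsilon,\beta},y_{\varepsilon,\beta})\in\overline{\theta}\times\overline{\theta}$. Using $(A_\theta)$ together with $B(y+\beta m(y),r\beta)\subset \theta$ for $\beta\in(0,r]$, one checks that for $\varepsilon\ll \beta^{2}$ the vector $x_{\varepsilon,\beta}-y_{\varepsilon,\beta}-\beta m(y_{\varepsilon,\beta})$ is of order $\sqrt{\varepsilon}$, so $x_{\varepsilon,\beta}\in B(y_{\varepsilon,\beta}+\beta m(y_{\varepsilon,\beta}),r\beta)\subset \theta$. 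Hence we may legitimately use the subsolution inequality for $u$ at $x_{\varepsilon,\beta}$ and the supersolution inequality for $v$ at $y_{\varepsilon,\beta}\in\overline{\theta}$.

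Next I would invoke the Crandall--Ishii maximum principle (Theorem 3.2 in \cite{ISHII 1992}) applied to the penalty $\phi(x,y)=\tfrac{1}{2\varepsilon}|x-y-\beta m(y)|^{2}+\beta|y-\hat{x}|^{2}$: one obtains jets $(p,A)\in \overline{J}^{2,+}u(x_{\varepsilon,\beta})$ and $(q,B)\in \overline{J}^{2,-}v(y_{\varepsilon,\beta})$ with $p=D_x\phi$, $q=-D_y\phi$, and matrices $A,B$ satisfying the standard two-sided bound in assumption $(A_H)$(ii). Writing down the sub/super inequalities and subtracting yields
\begin{equation*}
\lambda(u(x_{\varepsilon,\beta})-v(y_{\varepsilon,\beta}))\;\leq\;H(y_{\varepsilon,\beta},q,B)-H(x_{\varepsilon,\beta},p,A).
\end{equation*}
Splitting this difference as $[H(y_{\varepsilon,\beta},q,B)-H(x_{\varepsilon,\beta},q,B)]+[H(x_{\varepsilon,\beta},q,B)-H(x_{\varepsilon,\beta},p,A)]$ and applying the two moduli in $(A_H)$(ii), the right-hand side is bounded by $\rho(|x_{\varepsilon,\beta}-y_{\varepsilon,\beta}|^{2}/\varepsilon+|x_{\varepsilon,\beta}-y_{\varepsilon,\beta}|(|q|+1))+\rho(|p-q|+|A-B|)$; both terms tend to zero as first $\varepsilon\downarrow 0$ and then $\beta\downarrow 0$ by the standard Crandall--Ishii estimates, while the left-hand side converges to $\lambda M>0$, giving the required contradiction.

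The main obstacle is precisely the coupling between $\varepsilon$ and $\beta$ in the penalty: one has to verify carefully that $p$ and $q$, which differ from $\tfrac{1}{\varepsilon}(x_{\varepsilon,\beta}-y_{\varepsilon,\beta}-\beta m(y_{\varepsilon,\beta}))$ only by $O(\beta)$ terms, remain bounded as $\varepsilon\downarrow 0$ (for fixed $\beta$), and that $x_{\varepsilon,\beta}\in\theta$ for all sufficiently small $\varepsilon$. The uniform continuity of $m$ and the quadratic interior-cone condition in $(A_\theta)$ are exactly what is needed; this is the place where Katsoulakis' argument is genuinely more delicate than the standard comparison proof for Dirichlet problems.
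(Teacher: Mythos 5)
First, a point of reference: the paper does not prove Theorem \ref{th1} at all --- it is quoted (with slightly altered hypotheses, see the remark immediately following it) from Theorem 2.2 of \cite{Katsoulakis 1994}, so there is no internal proof to compare yours against. Your sketch does follow the strategy that Katsoulakis (in the tradition of Soner and Capuzzo-Dolcetta--Lions) actually employs, namely doubling of variables with a penalty shifted along the inward field $m$ of $(A_\theta)$ so as to force the subsolution test point into $\theta$; the overall plan is therefore the right one.

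As written, however, the argument has several genuine gaps. (1) The order of limits is incompatible with the structural condition: with the penalty $\frac{1}{2\varepsilon}|x-y-\beta m(y)|^2$ one has $|x_{\varepsilon,\beta}-y_{\varepsilon,\beta}|\approx\beta$, so the quantity $\frac{1}{\varepsilon}|x_{\varepsilon,\beta}-y_{\varepsilon,\beta}|^2\approx\beta^2/\varepsilon$ entering $(A_H)$(ii) blows up when $\varepsilon\downarrow0$ with $\beta$ fixed; on the other hand, coupling $\beta$ to $\varepsilon$ conflicts with the requirement $\varepsilon\ll\beta^2$ that you use to place $x_{\varepsilon,\beta}$ in $\theta$. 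Resolving this tension is the heart of Katsoulakis's proof and is missing from the sketch. (2) The assertion that the left-hand side converges to $\lambda M$ rests on the lower bound $\sup\Phi_{\varepsilon,\beta}\geq u(\hat{x}+\beta m(\hat{x}))-v(\hat{x})$ together with $\liminf_{\beta\to0}u(\hat{x}+\beta m(\hat{x}))\geq u(\hat{x})$; the latter is \emph{not} a consequence of upper semicontinuity (USC only gives the opposite inequality with $\limsup$). This is exactly where the nontangential upper semicontinuity hypothesis of \cite{Katsoulakis 1994}, flagged in the paper's own remark after the theorem, is needed, and your argument invokes only $u\in USC(\overline{\theta})$. (3) $(A_\theta)$ provides $m$ merely bounded and uniformly continuous, so your test function is not $C^2$ in $y$ and Theorem 3.2 of \cite{ISHII 1992} cannot be applied to it as stated; $m$ must first be regularized, with a check that the interior cone property survives the perturbation. (4) The decomposition $[H(y,q,B)-H(x,q,B)]+[H(x,q,B)-H(x,p,A)]$ does not fit $(A_H)$(ii): the modulus $\rho(|p-q|+|A-B|)$ does not tend to zero, since $|A-B|$ is generically of order $1/\varepsilon$. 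The condition has to be used in the form $[H(y,q,B)-H(y,p,B)]+[H(y,p,B)-H(x,p,A)]$, where the second bracket is controlled via the joint matrix inequality rather than via $|A-B|$.
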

\begin{remark}
In Theorem 2.2 in \cite{Katsoulakis 1994} the condition on $u$ is slightly weaker formulated: $u\in USC(\theta)$ is nontangential upper semicontinuous on $\partial\theta$.
\end{remark}
\begin{theorem}\label{th2}(Existence; Theorem 3.1 in \cite{Katsoulakis 1994})
If, in addition to the above assumptions, there is a bounded supersolution $\widetilde{v}\in LSC(\overline{\theta})$ of (\ref{r3.1}), then (\ref{r3.1}) has a constrained viscosity solution $v\in LSC(\overline{\theta})$; it is given by the smallest supersolution of (\ref{r3.1}) in $LSC(\overline{\theta})$.
\end{theorem}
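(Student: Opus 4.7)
The plan is to build $v$ by Perron's method adapted to the constrained setting. Put
$$\mathcal{S} := \{w \in \mbox{LSC}(\overline{\theta}) : w \mbox{ is a bounded supersolution of (\ref{r3.1}) on } \overline{\theta},\ w\le\widetilde{v}\},$$
which is nonempty since $\widetilde{v}\in\mathcal{S}$, and set $v(x):=\inf\{w(x):w\in\mathcal{S}\}$ for $x\in\overline{\theta}$. To secure a uniform lower bound, I would first observe that for $C>0$ large enough the constant $-C$ is a classical subsolution of (\ref{r3.1}) on $\theta$ (using uniform continuity of $H$ and $\lambda>0$), so Theorem \ref{th1} forces $w\ge -C$ for every $w\in\mathcal{S}$. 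The candidate constrained viscosity solution is the lower semicontinuous envelope $v_*$.

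Verifying that $v_*$ is a supersolution of (\ref{r3.1}) on $\overline{\theta}$ uses the standard stability of supersolutions under pointwise infima. For $x_0\in\overline{\theta}$ and $\varphi\in C^2(\mathbb{R}^N)$ with $v_*-\varphi$ achieving a strict local minimum at $x_0$, one selects $w_n\in\mathcal{S}$ and touching points $x_n\to x_0$ with $w_n(x_n)\to v_*(x_0)$ such that $w_n-\varphi$ has a local minimum at $x_n$, and then passes to the limit in the supersolution inequality for $w_n$ at $x_n$, using the modulus of continuity of $H$ from $(A_H)$(ii).

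The main obstacle is to show that $v^*$ is a subsolution of (\ref{r3.1}) on $\theta$. I would argue this by contradiction using the classical Perron bump: if there existed $x_0\in\theta$ and $\varphi\in C^2(\mathbb{R}^N)$ with $v^*-\varphi$ achieving a strict local maximum of value $0$ at $x_0$ and
$$\lambda v^*(x_0)+H(x_0,D\varphi(x_0),D^2\varphi(x_0))>0,$$
then continuity of $H$ would produce $\rho,\eta>0$ with $\overline{B}(x_0,\rho)\subset\theta$, $\varphi+\eta<v_*$ on $\partial B(x_0,\rho)$ (from strictness of the maximum), and $\varphi+\eta$ a classical strict supersolution of (\ref{r3.1}) on $\overline{B}(x_0,\rho)$. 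Then the gluing
$$\widetilde{w}(x):=\begin{cases}\min\{v_*(x),\varphi(x)+\eta\},&x\in\overline{B}(x_0,\rho),\\ v_*(x),&x\in\overline{\theta}\setminus B(x_0,\rho),\end{cases}$$
belongs to $\mbox{LSC}(\overline{\theta})$, remains a supersolution on $\overline{\theta}$ (it coincides with $v_*$ near $\partial\theta$), and lies strictly below $v$ near $x_0$, contradicting the definition of $v$.

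Once both properties are established, Theorem \ref{th1} applied between $v^*$ and $v_*$ gives $v^*\le v_*$, so $v = v_* = v^*\in C(\overline{\theta})$ is a constrained viscosity solution. Minimality is then immediate: for any supersolution $w\in\mbox{LSC}(\overline{\theta})$ of (\ref{r3.1}) on $\overline{\theta}$, the function $\min\{w,\widetilde{v}\}$ is again a supersolution (infimum of two supersolutions) and hence lies in $\mathcal{S}$, giving $w\ge\min\{w,\widetilde{v}\}\ge v$. The delicate technical points are the uniform lower bound on $\mathcal{S}$ and the compatibility of the bump modification with the supersolution inequality on $\partial\theta$; the latter is automatic because the bump is supported in $B(x_0,\rho)\subset\theta$, whereas assumption $(A_\theta)$ enters only through Theorem \ref{th1}.
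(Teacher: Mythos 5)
The paper does not prove this statement: it is quoted verbatim as Theorem 3.1 of Katsoulakis \cite{Katsoulakis 1994}, so there is no internal proof to compare against. Your strategy --- Perron's method ``from above'', taking $v=\inf\mathcal{S}$ over bounded LSC supersolutions dominated by $\widetilde{v}$, using a uniform lower bound via comparison with a large negative constant, stability of infima for the supersolution property of $v_*$, a bump argument for the subsolution property of $v^*$ on $\theta$, and closing with Theorem \ref{th1} --- is exactly the construction the quoted statement points to (``the smallest supersolution in $LSC(\overline{\theta})$''), and the minimality argument via $\min\{w,\widetilde{v}\}$ is correct.

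However, the bump step, which is the heart of the proof, fails as written because of a sign error. If $v^*-\varphi$ has a strict local maximum of value $0$ at $x_0$, then $v^*<\varphi$ on $\partial B(x_0,\rho)$ and indeed $v_*\le v^*\le\varphi$ on all of $\overline{B}(x_0,\rho)$; your claim ``$\varphi+\eta<v_*$ on $\partial B(x_0,\rho)$ (from strictness of the maximum)'' is the reverse of what strictness gives. Worse, since $\varphi+\eta>\varphi\ge v^*\ge v_*$ throughout the ball, your glued function satisfies $\min\{v_*,\varphi+\eta\}\equiv v_*$, so $\widetilde{w}=v_*$ everywhere and no contradiction with the minimality of $v$ is produced. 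The correct perturbation is $\varphi-\eta$ with $\eta>0$ small: the strict inequality $\lambda\varphi(x_0)+H(x_0,D\varphi(x_0),D^2\varphi(x_0))>0$ and continuity of $H$ keep $\varphi-\eta$ a classical supersolution on $\overline{B}(x_0,\rho)$; the strict maximum gives $v^*\le\varphi-\epsilon_\rho$ on an annulus near $\partial B(x_0,\rho)$, so for $\eta<\epsilon_\rho$ the gluing coincides with $v_*$ there and is a global supersolution in $\mathcal{S}$; and since $v^*(x_0)=\varphi(x_0)$ there are points $x_n\to x_0$ with $v(x_n)\to\varphi(x_0)>\varphi(x_0)-\eta$, so $\widetilde{w}(x_n)\le\varphi(x_n)-\eta<v(x_n)$ for large $n$, which is the desired contradiction. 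With this correction the remainder of your argument goes through.
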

\begin{remark}
Let us suppose that $H$ satisfies the radial monotonicity assumption, which is introduced in Theorem \ref{th:3.3}. Then
\begin{equation}\label{r3.2}
H(x,p,A)\geq H(x,0,0),\ (x,p,A)\in\overline{\theta}\times\mathbb{R}^N\times\mathcal{S}^N\ (\mbox{see\ Lemma \ref{l:3.4}}).
\end{equation}
Furthermore, let $K\in\mathbb{R}$ be such that $K\geq-H(x,0,0),\ x\in\overline{\theta}$. Then one checks easily that $\widetilde{v}(x)=\frac{K}{\lambda},\  x\in\overline{\theta}$, is a viscosity supersolution of (\ref{r3.1}) on $\overline{\theta}$.\\
Indeed, if $x\in\overline{\theta}$ and $\varphi\in C^2(\mathbb{R}^N)$ such that $\widetilde{v}-\varphi\geq\widetilde{v}(x)-\varphi(x)$ on $\overline{\theta}$, then clearly
\begin{equation*}
 \lambda \widetilde{v}(x)+ H(x,D\varphi(x),D^2\varphi(x))\geq 0,\  \ x\in\overline{\theta},
\end{equation*}
with the help of (\ref{r3.2}).
\end{remark}
Let us introduce the space $Lip_{M_0}(\overline{\theta})$ ($M_0>0$) as space of all Lipschitz functions $u: \overline{\theta}\to\mathbb{R}$ with $|u(x)|\leq M_0$, $|u(x)-u(y)|\leq M_0|x-y|, x,y\in\overline{\theta},$ and let us suppose that, for $M_0>0$ large enough,\\
(H) PDE (\ref{r3.1}) has a solution $V_\lambda$ such that $\lambda V_\lambda\in Lip_{M_0}(\overline{\theta})$, for all $\lambda>0$.
\begin{remark}
Under the assumption of the existence of a bounded supersolution on $\overline{\theta}$, we have from Theorem \ref{th2} the existence of a viscosity solution $V_\lambda\in LSC(\overline{\theta})$. With (H) we suppose that $\lambda V_\lambda\in Lip_{M_0}(\overline{\theta})$. The uniqueness of this solution $V_\lambda\in Lip_M(\overline{\theta})$ is guaranteed by the comparison priciple (Theorem \ref{th1}). Later, in the discussion of the case where the Hamiltonian $H$ is associated with the stochastic control problem (\ref{r94}), we will see that for such Hamiltonians PDE (\ref{r3.1}) has a unique constrained viscosity solution $\lambda V_\lambda\in Lip_{M_0}(\overline{\theta})$, for all $\lambda>0$.
\end{remark}
In what follows we work with the hypothesis (H).

Associated with our problem is the family of Hamiltonians
\begin{equation*}
H_\lambda(x,p,A):=\lambda H(x,\frac{1}{\lambda}p,\frac{1}{\lambda}A),\ (x,p,A)\in\overline{\theta}\times\mathbb{R}^N\times\mathcal{S}^N,\ \lambda>0,
\end{equation*}
where $H$ is supposed to satisfy ($A_H$).
\begin{theorem}\label{th:3.3} We suppose that, in addition to \textbf{($A_\theta$)}, \textbf{($A_H$)} and (H), the Hamiltonian $H$ satisfies the radial monotonicity condition:
\begin{equation}\label{r15}
H(x,l p,l A)\geq H(x,p,A),\ \text{for\ all\ real}\ l\geq 1,\ (x,p,A)\in\overline{\theta}\times \mathbb{R}^N\times \mathcal{S}^N.\tag{H5}
\end{equation}
For all $\lambda >0$, let $V_\lambda$ be the constrained viscosity solution of PDE (\ref{r3.1}) such that $\lambda V_\lambda\in \mbox{Lip}_M(\overline{\theta})$. Then \\
\rm{(i)} \emph{$\lambda\rightarrow\lambda V_\lambda(x)$ is nondecreasing, for every $x\in\overline{\theta}$;}\\
\rm{(ii)} \emph{The limit $\lim_{\lambda\rightarrow0^+}\lambda V_\lambda(x)$ exists, for every $x\in\overline{\theta}$;}\\
\rm{(iii)} \emph{The convergence in (ii) is uniform on $\overline{\theta}$.}
\end{theorem}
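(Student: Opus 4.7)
\textbf{Proof plan for Theorem \ref{th:3.3}.}

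The natural reduction is to recast everything in terms of $W_\lambda := \lambda V_\lambda$. Plugging $V_\lambda = W_\lambda/\lambda$ into (\ref{r3.1}) shows that $W_\lambda$ is a constrained viscosity solution, in $\mathrm{Lip}_{M_0}(\overline{\theta})$, of
\begin{equation*}
W_\lambda(x) + H\!\left(x,\tfrac{1}{\lambda}DW_\lambda(x),\tfrac{1}{\lambda}D^2W_\lambda(x)\right)=0,\qquad x\in\overline{\theta},
\end{equation*}
with the supersolution part holding on $\overline{\theta}$ and the subsolution part on $\theta$. Denote by $\widetilde{H}_\lambda(x,p,A):=H(x,p/\lambda,A/\lambda)$ the transformed Hamiltonian; a direct check shows that $\widetilde{H}_\lambda$ inherits $(A_H)$ (with a $\lambda$-dependent continuity modulus $\rho_\lambda$ derived from $\rho$), so the Katsoulakis comparison principle (Theorem~\ref{th1}) applies to the $\lambda$-equation for each fixed $\lambda>0$.

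For (i), fix $0<\lambda\le\mu$ and set $l:=\mu/\lambda\ge 1$. If $\varphi\in C^2(\mathbb{R}^N)$ and $W_\mu-\varphi$ attains a local minimum at $x_0\in\overline{\theta}$, then by the supersolution property of $W_\mu$ for its own equation and by the radial monotonicity assumption (\ref{r15}) applied with this $l$,
\begin{equation*}
H\!\left(x_0,\tfrac{1}{\lambda}D\varphi(x_0),\tfrac{1}{\lambda}D^2\varphi(x_0)\right)
= H\!\left(x_0,l\tfrac{1}{\mu}D\varphi(x_0),l\tfrac{1}{\mu}D^2\varphi(x_0)\right)
\ge H\!\left(x_0,\tfrac{1}{\mu}D\varphi(x_0),\tfrac{1}{\mu}D^2\varphi(x_0)\right)\ge -W_\mu(x_0).
\end{equation*}
Thus $W_\mu$ is a viscosity supersolution of the $\lambda$-equation on $\overline{\theta}$. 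Since $W_\lambda$ is a viscosity subsolution of the $\lambda$-equation on $\theta$, Theorem~\ref{th1} gives $W_\lambda\le W_\mu$ on $\overline{\theta}$, i.e., $\lambda\mapsto\lambda V_\lambda(x)$ is nondecreasing.

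For (ii), combining monotonicity in $\lambda$ from (i) with the uniform bound $|\lambda V_\lambda(x)|\le M_0$ from hypothesis (H) yields the existence of the pointwise limit $w_0(x):=\lim_{\lambda\to 0^+}\lambda V_\lambda(x)$ for every $x\in\overline{\theta}$. For (iii), the family $\{\lambda V_\lambda\}_\lambda$ is equi-Lipschitz on $\overline{\theta}$ with the common constant $M_0$, so the pointwise limit $w_0$ is also $M_0$-Lipschitz and in particular continuous. Taking any decreasing sequence $\lambda_n\downarrow 0$, we have a monotone sequence $\lambda_n V_{\lambda_n}$ of continuous functions on the compact set $\overline{\theta}$ converging pointwise to the continuous $w_0$, so Dini's theorem gives uniform convergence; arbitrariness of the sequence together with monotonicity in the parameter $\lambda$ upgrades this to uniform convergence as $\lambda\to 0^+$.

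The main obstacle is really concentrated in step (i): one has to carefully verify that, after the rescaling $W_\lambda=\lambda V_\lambda$, the transformed Hamiltonian $\widetilde{H}_\lambda$ still satisfies the hypotheses required by Katsoulakis' comparison theorem, and then to exploit (\ref{r15}) at the level of test functions to make $W_\mu$ a supersolution of the $\lambda$-equation rather than only of its own $\mu$-equation. Once (i) is in hand, (ii) and (iii) are soft consequences of monotonicity plus the uniform Lipschitz bound provided by (H).
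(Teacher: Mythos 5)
Your proposal is correct and follows essentially the same route as the paper: rescale to $w_\lambda=\lambda V_\lambda$, use the radial monotonicity (\ref{r15}) with $l=\mu/\lambda\ge 1$ to show $w_\mu$ is a viscosity supersolution of the $\lambda$-equation, invoke Katsoulakis' comparison principle (Theorem \ref{th1}) to get $w_\lambda\le w_\mu$, and then deduce (ii) from monotonicity plus boundedness and (iii) from the equi-Lipschitz bound. The only cosmetic differences are that you argue at the level of test functions and close (iii) via Dini's theorem where the paper cites Arzel\`a--Ascoli, and you helpfully flag the (glossed-over) verification that the rescaled Hamiltonian still meets the hypotheses of the comparison theorem.
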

\begin{proof}
First, we know that for every $\lambda >0$, $w_\lambda(x):=\lambda V_\lambda(x)$ is a constrained viscosity solution of
\begin{equation}\label{r16}
\lambda w_\lambda(x)+H_\lambda(x,D w_\lambda(x),D^2w_\lambda(x))=0.
\end{equation}
For any $\lambda,\ \mu>0$, we have $\displaystyle \frac{\lambda}{\mu}H_\mu(x,\frac{\mu}{\lambda}p,\frac{\mu}{\lambda}A)=\frac{\lambda}{\mu}(\mu H(x,\frac{\mu}{\lambda}(\frac{1}{\mu}p),\frac{\mu}{\lambda}(\frac{1}{\mu}A)))=H_\lambda(x,p,A).
$\\
Using the radial monotonicity condition (\ref{r15}) we have, for any $\mu>\lambda>0$, in viscosity sense,
\begin{eqnarray*}
\begin{array}{lll}
  &\displaystyle\lambda w_\mu(x)+H_\lambda(x,D w_\mu(x),D^2w_\mu(x)) = \mu\cdot\frac{\lambda}{\mu} w_\mu(x)+\frac{\lambda}{\mu}H_\mu(x,\frac{\mu}{\lambda}D w_\mu(x),\frac{\mu}{\lambda}D^2w_\mu(x)) \\
 &\displaystyle   =\frac{\lambda}{\mu}[\mu w_\mu(x)+\mu H(x,\frac{\mu}{\lambda}(\frac{1}{\mu}D w_\mu(x)),\frac{\mu}{\lambda}(\frac{1}{\mu}D^2w_\mu(x)))] \\
 &\displaystyle   \geq\frac{\lambda}{\mu}[\mu w_\mu(x)+\mu H(x,\frac{1}{\mu}D w_\mu(x),\frac{1}{\mu}D^2w_\mu(x))] \\
 &\displaystyle   = \frac{\lambda}{\mu}(\mu w_\mu(x)+H_\mu(x,Dw_\mu(x),D^2w_\mu(x)))\geq0,\ x\in\overline{\theta}.
   \end{array}
\end{eqnarray*}
Therefore, $w_\mu\in Lip_{M_0}(\overline{\theta})$ is a viscosity supersolution to (\ref{r16}) on $\overline{\theta}$. From the comparison principle-Theorem \ref{th1}, $w_\mu\geq w_\lambda$ on $\overline{\theta}$. Statement (ii) follows from (i) and the boundedness of $\lambda V_\lambda$, $\lambda>0$, while thanks to the fact that $\lambda V_\lambda\in \mbox{Lip}_M(\overline{\theta}),\  \lambda>0$, the Arzel\`{a}-Ascoli Theorem yields (iii).
\end{proof}
\begin{lemma}\label{l:3.4}
Let $H(x,p,A)$ be convex in $(p,A)\in\mathbb{R}^N\times\mathcal{S}^N$. Then we have the following equivalence:\\
\rm{i)}\  \emph{The radial monotonicity (\ref{r15}) holds true for $H(x,\cdot,\cdot)$;}\\
\rm{ii)}\  $H(x,l'p,l'A)\geq H(x,lp,lA),\ 0\leq l\leq l',\ (p,A)\in\mathbb{R}^N\times\mathcal{S}^N$;\\
\rm{iii)}\  $H(x,p,A)\geq H(x,0,0),\ (p,A)\in\mathbb{R}^N\times\mathcal{S}^N$.
\end{lemma}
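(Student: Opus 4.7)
The plan is to prove the chain of implications i) $\Rightarrow$ iii) $\Rightarrow$ ii) $\Rightarrow$ i). The implication ii) $\Rightarrow$ i) is immediate: take $l=1$ and $l'=l \geq 1$ in ii) to recover the radial monotonicity condition. So the real content lies in the other two implications, both of which use convexity in an essential way.

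For i) $\Rightarrow$ iii), I would first rewrite the radial monotonicity condition in its reciprocal form. Setting $s=1/l \in (0,1]$ and replacing $(p,A)$ by $(sp, sA)$ in (\ref{r15}), one obtains
\[
H(x, sp, sA) \leq H(x, p, A), \quad s \in (0,1],\ (p,A) \in \mathbb{R}^N \times \mathcal{S}^N.
\]
Now the map $t \mapsto H(x, tp, tA)$ is convex on $\mathbb{R}$ (as a composition of a convex function with an affine map), hence continuous on $\mathbb{R}$. Letting $s \downarrow 0$ in the previous inequality gives $H(x, 0, 0) \leq H(x, p, A)$, which is exactly iii).

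For iii) $\Rightarrow$ ii), I would exploit convexity of $H(x, \cdot, \cdot)$ directly. Let $0 \leq l \leq l'$. If $l' = 0$ the statement is trivial, so assume $l' > 0$ and set $t = l/l' \in [0,1]$. Writing $(lp, lA) = t(l'p, l'A) + (1-t)(0,0)$, convexity yields
\[
H(x, lp, lA) \leq t\, H(x, l'p, l'A) + (1-t)\, H(x, 0, 0).
\]
By iii), $H(x, 0, 0) \leq H(x, l'p, l'A)$, and substituting this into the right-hand side gives $H(x, lp, lA) \leq H(x, l'p, l'A)$, i.e., ii).

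I do not anticipate a real obstacle here, since the argument is essentially a one-variable convex analysis observation applied ray-by-ray. The only subtle point is in i) $\Rightarrow$ iii), where one must invoke the continuity of the one-variable slice $t \mapsto H(x, tp, tA)$ at $t=0$; this is supplied for free by convexity on all of $\mathbb{R}$ (alternatively, by the global uniform continuity of $H$ already assumed in the paper).
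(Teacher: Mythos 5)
Your proposal is correct and follows essentially the same route as the paper: the substantive step, iii) $\Rightarrow$ ii), is the identical convexity argument writing $(lp,lA)$ as the convex combination $\tfrac{l}{l'}(l'p,l'A)+(1-\tfrac{l}{l'})(0,0)$ and then replacing $H(x,0,0)$ by $H(x,l'p,l'A)$. The only organizational difference is that you obtain iii) from i) directly by letting $s\downarrow 0$ along the ray (which is in fact slightly more careful than the paper's assertion that i) and ii) are ``obviously equivalent,'' since the endpoint $l=0$ of ii) requires exactly such a continuity or convexity argument).
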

\begin{proof} Indeed, i) and ii) are obviously equivalent. Moreover, ii) implies iii) (take $l'=1$, and $l=0$). Thus, it remains only to show that iii) implies ii).
For this end, given any $(p,A)\in\mathbb{R}^N\times\mathcal{S}^N$, we consider the function $G(l):=H(x,lp,lA),\ l\geq0$. From the convexity of $H(x,\cdot,\cdot)$ it follows that of $G$, and iii) implies that $G(l)\geq G(0),\ l\geq0$. Consequently, for $l'\geq l\geq0$ and $\displaystyle k:=\frac{l}{l'}\in[0,1]$,
\begin{equation*}
  H(x,lp,lA)=G(l'k)=G(l'k+(1-k)0)\leq kG(l')+(1-k)G(0)\leq kG(l')+(1-k)G(l')=H(x,l'p,l'A).
\end{equation*}
\end{proof}
\begin{remark}
We suppose that $H$ is of the form (\ref{r20}) and $(-\psi)(x,\cdot,u)=\{z\mapsto(-\psi)(x,z,u)\}$ is convex, for all $(x,u)\in\overline{\theta}\times U$. Then $H(x,p,A)$ is convex in $(p,A)$, for all $x\in\overline{\theta}$.

Under the additional assumption of the existence of some $u_0\in U$ such that $b(x,u_0)=0,\ \sigma(x,u_0)=0$ and $\psi(x,0,u)\geq\psi(x,0,u_0)$, for all $u\in U$, we have
\begin{equation*}
\begin{split}
 &H(x,p,A)=\sup\limits_{u\in U}\{\langle-p,b(x,u)\rangle-\frac{1}{2}tr(\sigma\sigma^*(x,u)A)-\psi(x,p\sigma(x,u),u)\}\\
 &\geq\langle-p,b(x,u_0)\rangle-\frac{1}{2}tr(\sigma\sigma^*(x,u_0)A)-\psi(x,p\sigma(x,u_0),u_0)\\
 &=-\psi(x,0,u_0)=\sup\limits_{u\in U}\{-\psi(x,0,u)\}=H(x,0,0),\ \  (p,A)\in\mathbb{R}^N\times\mathcal{S}^N.
\end{split}
\end{equation*}
Then Lemma \ref{l:3.4} yields that $H$ satisfies the radial monotonicity condition.

However, without additional assumption for $H$ of the form (\ref{r20}), only with $(-\psi)(x,z,u)$ is convex in $z$, we don't, in general, have the radial monotonicity.

Indeed, for example, if, for some $\varepsilon>0$ and $x\in\overline{\theta}$, $\sigma\sigma^*(x,u)\geq\varepsilon1_{\mathbb{R}^N}, u\in U$, then
\begin{equation*}
\begin{split}
 & H(x,0,A)=\sup\limits_{u\in U}\{-\frac{1}{2}tr(\sigma\sigma^*(x,u)A)-\psi(x,0,u)\}\\
 &\leq-\frac{1}{2}\varepsilon tr(A)+\sup\limits_{u\in U}\{-\psi(x,0,u)\}=-\frac{1}{2}\varepsilon tr(A)+H(x,0,0)\\
 & < H(x,0,0), \ \mbox{for\ all}\ A\in\mathcal{S}^N \ \mbox{with}\ \ tr(A)>0.
\end{split}
\end{equation*}
\end{remark}
Under the assumptions of Theorem \ref{th:3.3} we let $w_0(x)=\lim\limits_{\lambda\rightarrow0^+}\lambda V_\lambda(x)$, $x\in\overline{\theta}$. Next we will characterize $w_0(x)$ under the condition of radial monotonicity of $H$ as maximal viscosity subsolution of the PDE
\begin{equation}\label{r3.4}
  W(x)+\overline{H}(x,DW(x),D^2W(x))=0,\ x\in\theta,
\end{equation}
where $\overline{H}(x,p,A):=\min\{M_0,\sup\limits_{l>0}H(x,lp,lA)\}$.
\begin{remark}
As $H:\overline{\theta}\times\mathbb{R}^N\times\mathcal{S}^N\to\mathbb{R}$ is continuous and $\sup\limits_{l\geq0}H(x,lp,lA)=\lim\limits_{l\to\infty}\uparrow H(x,lp,lA)$, the function $\overline{H}$ is lower semicontinuous. Recall that a function $W\in USC(\overline{\theta})$ is a viscosity subsolution of (\ref{r3.4}) on $\theta$, if for all $x\in\theta$, $\varphi\in C^2(\mathbb{R}^N)$ such that $W-\varphi\leq W(x)-\varphi(x)$ on $\theta$,
$$W(x)+\overline{H}(x,D\varphi(x),D^2\varphi(x))\leq0.$$
\end{remark}
\begin{theorem}\label{the:3.4}
We make the same assumptions as in Theorem \ref{th:3.3}. For all $\lambda>0$, let $V_\lambda$ be the unique constrained viscosity solution of the PDE
\begin{equation}\label{r99}
\lambda V(x)+H(x,DV(x),D^2V(x))=0,\ x\in \overline{\theta},
\end{equation}
such that $\lambda V_\lambda\in \mbox{Lip}_{M_0}(\overline{\theta})$, for some $M_0> 0$ large enough and independent of $\lambda$.
Then, $w_0(x):=\lim\limits_{\lambda\to0^+}\lambda V_\lambda(x)$, $x\in\overline{\theta}$, is the maximal viscosity subsolution of (\ref{r3.4}),
$$w_0(x)=\sup\{w(x):w\in \mbox{Lip}_{M_0}(\overline{\theta}), w+\overline{H}(x,Dw,D^2w)\leq 0 \ \mbox{on} \ \theta\ (\mbox{in\ viscosity\ sense})\},\ x\in\overline{\theta},$$
where $\overline{H}(x,p,A):=\min \Big\{M_0, \sup\limits_{l>0}H(x,l p,l A)\Big\}$.
\end{theorem}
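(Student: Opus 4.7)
The plan is to establish two inequalities: first, that $w_0$ itself belongs to $\mbox{Lip}_{M_0}(\overline{\theta})$ and is a viscosity subsolution of (\ref{r3.4}) on $\theta$; second, that every subsolution $w\in\mbox{Lip}_{M_0}(\overline{\theta})$ of (\ref{r3.4}) satisfies $w\le w_0$ on $\overline{\theta}$. Together these characterize $w_0$ as the maximal subsolution in $\mbox{Lip}_{M_0}(\overline{\theta})$.

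For the first step, the Lipschitz bound and the $L^\infty$ bound of $w_0$ are inherited directly from $\lambda V_\lambda\in\mbox{Lip}_{M_0}(\overline{\theta})$ via the uniform convergence in Theorem \ref{th:3.3}(iii). To prove the subsolution inequality, fix $x_0\in\theta$ and $\varphi\in C^2(\mathbb{R}^N)$ such that $w_0-\varphi$ attains a strict local maximum at $x_0$ (without loss of generality, after adding a small multiple of $|x-x_0|^2$). By the standard stability argument under uniform convergence, there exist points $x_\lambda\to x_0$ at which $\lambda V_\lambda-\varphi$ attains a local maximum. The subsolution inequality for $V_\lambda$ in (\ref{r99}), applied with test function $\varphi/\lambda$, gives
\begin{equation*}
\lambda V_\lambda(x_\lambda)+H\bigl(x_\lambda,\lambda^{-1}D\varphi(x_\lambda),\lambda^{-1}D^2\varphi(x_\lambda)\bigr)\le 0.
\end{equation*}
For each fixed $l>0$ and every $\lambda\le 1/l$, the radial monotonicity (\ref{r15}) applied with the factor $(l\lambda)^{-1}\ge 1$ gives $H(x_\lambda,\lambda^{-1}D\varphi,\lambda^{-1}D^2\varphi)\ge H(x_\lambda,lD\varphi,lD^2\varphi)$, hence $H(x_\lambda,lD\varphi(x_\lambda),lD^2\varphi(x_\lambda))\le -\lambda V_\lambda(x_\lambda)$. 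Letting $\lambda\to 0^+$ by continuity of $H$ and then taking the supremum over $l>0$ produces $w_0(x_0)+\sup_{l>0}H(x_0,lD\varphi(x_0),lD^2\varphi(x_0))\le 0$; since $\overline{H}\le\sup_{l>0}H$, the subsolution inequality for $w_0$ follows.

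For the maximality step, the plan is to prove that $w/\lambda$ is a viscosity subsolution of (\ref{r99}) on $\theta$, so that the comparison principle (Theorem \ref{th1}) yields $w/\lambda\le V_\lambda$ on $\overline{\theta}$, i.e.\ $w\le\lambda V_\lambda$; passing to the limit $\lambda\downarrow 0$ then delivers $w\le w_0$. A crucial ingredient is the scale invariance $\overline{H}(x,\lambda p,\lambda A)=\overline{H}(x,p,A)$ for every $\lambda>0$, obtained via the substitution $l'=l\lambda$ in the defining supremum. Fix $x_0\in\theta$ and $\psi\in C^2$ with $w/\lambda-\psi$ attaining a local maximum at $x_0$; then $w-\lambda\psi$ has a local maximum at $x_0$, and the subsolution property of $w$ combined with the scale invariance of $\overline{H}$ gives $w(x_0)+\overline{H}(x_0,D\psi(x_0),D^2\psi(x_0))\le 0$.

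The main obstacle is to deduce from this the uncapped inequality $w(x_0)+H(x_0,D\psi(x_0),D^2\psi(x_0))\le 0$. When $\sup_{l>0}H(x_0,lD\psi,lD^2\psi)\le M_0$ the cap is inactive, $\overline{H}$ equals this supremum, and taking $l=1$ gives $H(x_0,D\psi,D^2\psi)\le\overline{H}\le -w(x_0)$. In the delicate case $\sup_{l>0}H(x_0,lD\psi,lD^2\psi)>M_0$, we have $\overline{H}=M_0\le -w(x_0)$, i.e.\ $w(x_0)\le -M_0$; combined with $w\in\mbox{Lip}_{M_0}(\overline{\theta})$ (so $w\ge -M_0$), this forces $w(x_0)=-M_0$ and hence $x_0$ is a global minimum of $w$ on $\overline{\theta}$. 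The local-maximum condition on $w-\lambda\psi$ at this minimum then forces $\lambda\psi$ itself to have a local minimum at $x_0$, so $D\psi(x_0)=0$ and $D^2\psi(x_0)\ge 0$. By the monotonicity condition $(A_H)$(i), $H(x_0,0,D^2\psi(x_0))\le H(x_0,0,0)$; assuming (as encoded in ``$M_0$ large enough'') that $M_0\ge\sup_{x\in\overline{\theta}}H(x,0,0)$, we conclude $H(x_0,D\psi(x_0),D^2\psi(x_0))\le M_0=-w(x_0)$. The comparison principle then closes the argument, completing the proof.
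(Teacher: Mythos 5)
Your argument is correct, and while your first half (showing $w_0\in \mbox{Lip}_{M_0}(\overline{\theta})$ and $w_0+\overline{H}(x,Dw_0,D^2w_0)\leq 0$ on $\theta$) rests on the same idea as the paper --- freeze $l>0$, use the radial monotonicity (\ref{r15}) with the factor $(l\lambda)^{-1}\geq 1$, let $\lambda\downarrow 0$, then take the supremum over $l$ --- your maximality half is genuinely different. The paper introduces the truncated Hamiltonian $H^{M_0}=\min\{M_0,H\}$, proves that $w_\lambda=\lambda V_\lambda$ is a constrained viscosity solution of the truncated equation (\ref{rl7}), shows by an upper-envelope argument that $\bar{w}=\sup\{w:\,w\in\mathcal{S}_{H,M_0}\}$ is itself a subsolution of the $\overline{H}$-equation and hence of (\ref{rl7}), and only then invokes comparison, at the level of the truncated equation. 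You instead compare each individual $w\in\mathcal{S}_{H,M_0}$ with $V_\lambda$ directly in the original equation (\ref{r99}), using the exact scale invariance $\overline{H}(x,\lambda p,\lambda A)=\overline{H}(x,p,A)$ and then removing the cap pointwise: where the cap is inactive the choice $l=1$ suffices, and where it is active you force $w(x_0)=-M_0$, read off $D\psi(x_0)=0$ and $D^2\psi(x_0)\geq 0$ from the touching condition at a global minimum of $w$, and close with degenerate ellipticity. This buys two simplifications --- no upper-envelope lemma is needed, since maximality follows from $w\leq w_0$ for each $w$ separately, and Theorem \ref{th1} is applied to the equation for which it is literally stated, rather than to the truncated and rescaled Hamiltonian (whose structural hypotheses the paper does not re-verify) --- at the price of the extra largeness requirement $M_0\geq\sup_{x\in\overline{\theta}}H(x,0,0)$, which the paper's route does not use. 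Since the theorem only asserts the formula ``for some $M_0>0$ large enough'' and $H$ is continuous on the compact $\overline{\theta}$, this condition is admissible, but you should state it explicitly as part of the choice of $M_0$: it is not implied by $\lambda V_\lambda\in\mbox{Lip}_{M_0}(\overline{\theta})$ alone, and both $\mathcal{S}_{H,M_0}$ and $\overline{H}$ depend on $M_0$, so the statement being proved changes with the choice.
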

\begin{proof}
We define the set
$$\mathcal{S}_{H,M_0}=\{w: w\in \mbox{Lip}_{M_0}(\overline{\theta}),\ w+\overline{{H}}(x,Dw,D^2w)\leq 0 \ \text{on} \ \theta\ (\mbox{in\ viscosity\ sense})\},$$ and we set $\bar{w}(x)=\sup\{w(x), w\in\mathcal{S}_{H,M_0}\}.$\\
\textbf{Step 1}. We show that $w_0$ is a viscosity subsolution of (\ref{r3.4}), which implies that $w_0\in\mathcal{S}_{H,M}$ and, thus $w_0\leq \bar{w}$.\\
\textbf{Step 1.1}. We first prove that $w_\lambda=\lambda V_\lambda(x)\in Lip_{M_0}(\overline{\theta})$ is also a constrained viscosity solution of the equation
\begin{equation}\label{rl7}
  w(x)+H^{M_0}(x,\frac{1}{\lambda}Dw(x),\frac{1}{\lambda}D^2w(x))=0,\ x\in\overline{\theta},
\end{equation}
where $H^{M_0}(x,p,A):=\min\{M_0, H (x,p,A)\},\ \text{for\ all}\ (x,p,A)\in \overline{\theta}\times \mathbb{R}^N\times \mathcal{S}^N.$

In fact, let $x\in\theta$ and $\phi\in C^2(\mathbb{R}^N)$ be such that $(w_\lambda-\phi)(x)=\max\{(w_\lambda-\phi)(\overline{x}),\ \overline{x}\in\overline{\theta}\}$.
Then, as $V_\lambda$ is a constrained viscosity solution of (\ref{r99}) and $w_\lambda=\lambda V_\lambda$, we have
$$w_\lambda(x)+H(x,\frac{1}{\lambda}D\phi(x),\frac{1}{\lambda}D^2\phi(x))\leq 0.$$
Furthermore, from $w_\lambda\in \mbox{Lip}_{M_0}(\overline{\theta})$ we get
$$H(x,\frac{1}{\lambda}D\phi(x),\frac{1}{\lambda}D^2\phi(x))\leq -w_\lambda(x)\leq M_0.$$
It follows that
$$w_\lambda(x)+H^{M_0}(x,\frac{1}{\lambda}D\phi(x),\frac{1}{\lambda}D^2\phi(x))=w_\lambda(x)+H(x,\frac{1}{\lambda}D\phi(x),\frac{1}{\lambda}D^2\phi(x))\leq 0,$$
i.e., $w_\lambda$ is a constrained subsolution of (\ref{rl7}) in $\overline{\theta}$.

Next we show that $w_\lambda$ is also a supersolution on $\overline{\theta}$. Let $x\in\overline{\theta}$ and $\varphi\in C^2(\mathbb{R}^N)$ be such that $(w_\lambda-\varphi)(x)=\min\{(w_\lambda-\varphi)(\overline{x}), \overline{x}\in\overline{\theta}\}$. Obviously, from (\ref{r99}) and the fact that $w_\lambda\in\mbox{Lip}_{M_0}(\overline{\theta})$ we have the following both inequalities,
\begin{equation*}
\left\{
\begin{array}{lll}
w_\lambda(x)+H(x,\frac{1}{\lambda}D\varphi(x),\frac{1}{\lambda}D^2\varphi(x))\geq 0,\\
w_\lambda(x)+M_0\geq 0.
\end{array}
\right.
\end{equation*}
Thus,
$w_\lambda(x)+H^{M_0}(x,\frac{1}{\lambda}D\varphi(x),\frac{1}{\lambda}D^2\varphi(x))\geq 0$, which implies (\ref{rl7}).\\
\textbf{Step 1.2}. Now we show $w_0\in\mathcal{S}_{H,M_0}$, i.e., $w_0\in\mbox{Lip}_{M_0}(\overline{\theta})$ and
\begin{equation}\label{r18}
w_0+\overline{H}(x,Dw_0,D^2w_0)\leq 0\ \mbox{in} \ \theta,
\end{equation}
in viscosity sense.

Indeed, let us fix $l>0$. Then (\ref{r15}) and (\ref{rl7}) yield, for any $0<\lambda\leq\frac{1}{l}$,
$$w_\lambda+H^{M_0}(x,l Dw_\lambda,l D^2w_\lambda)\leq w_\lambda+H^{M_0}(x,\frac{1}{\lambda}Dw_\lambda,\frac{1}{\lambda}D^2w_\lambda)=0 \ \ \mbox{in} \ \theta,$$
in viscosity sense. Recall that $w_\lambda\in\mbox{Lip}_{M_0}(\overline{\theta}), \lambda>0$, and that $w_0$ is the uniform limit of $w_\lambda$, as $\lambda\downarrow0$. Consequently, $w_0\in Lip_{M_0}(\overline{\theta})$. Moreover, by the result that the uniform limit of subsolutions is a subsolution again, we conclude that, in viscosity sense,
$$w_0+H^{M_0}(x,l Dw_0,l D^2w_0)\leq 0, \ l>0.$$
Finally, taking the supremum with respect to $l>0$ over the increasing left-hand side, it follows that (\ref{r18}) holds.

Consequently, $w_0\in\mathcal{S}_{H,M_0}$ and thus $w_0\leq \bar{w}$.\\
\textbf{Step 2}. Notice that also $\bar{w}\in\mbox{Lip}_{M_0}(\overline{\theta})$. Thus, in order to prove that $w_0\geq \bar{w}$, we need to check that
\begin{equation}\label{r19}
  \bar{w}+\overline{H}(x,D\bar{w},D^2\bar{w})\leq 0 \ \mbox{in} \ \theta.
\end{equation}
The above property of the upper envelope of a bounded family of subsolutions is well
known when $H$ is continuous and can be extended to $\overline{H}$. Let $x\in\theta$ and $\phi\in C^2(\mathbb{R}^N)$ be such that $(\bar{w}-\phi)(x)=\max\{(\bar{w}-\phi)(\overline{x}), \overline{x}\in\overline{\theta}\}$. By adding a constant to $\phi$ one can assume that $\bar{w}(x)=\phi(x)$. For $\varepsilon>0$ we put $\phi_\varepsilon(y)=\phi(y)+\varepsilon|x-y|^2$, $y\in\mathbb{R}^N$. Then, $$(\bar{w}-\phi_\varepsilon)(y)\leq -\varepsilon|y-x|^2,$$
for every $y$ in $\overline{\theta}$, and, hence, also in some closed ball $B_r(x)\subseteq\theta$. Thus, by the very definition of $\bar{w}$, there
exists a sequence$\{w^n\}_n\subseteq \mathcal{S}_{H,M_0}$ such that $w^n(x)\geq \bar{w}(x)-\frac{1}{n}$ for all $n\geq 1$.
Let $x_n^\varepsilon$ be a maximum point of $w^n-\phi_\varepsilon$ over $B_r(x)$. Then we have that
$$-\frac{1}{n}\leq (w^n-\phi_\varepsilon)(x)\leq (w^n-\phi_\varepsilon)(x_n^\varepsilon)\leq -\varepsilon|x_n^\varepsilon-x|^2.$$
Consequently, $x_n^\varepsilon\rightarrow x$ and $(w^n-\phi_\varepsilon)(x_n^\varepsilon)\rightarrow 0$, as $n\rightarrow\infty$. Therefore, $w^n(x_n^\varepsilon)\rightarrow \bar{w}(x)$, as $n\rightarrow\infty$. Moreover, for all $l >0$ and $n$ large enough, we have
\begin{equation}\label{th3.4.5}
w^n(x_n^\varepsilon)+H^{M_0}(x_n^\varepsilon,l D\phi(x_n^\varepsilon),l D^2\phi(x_n^\varepsilon))\leq w^n(x_n^\varepsilon)+\overline{H}(x_n^\varepsilon,D\phi(x_n^\varepsilon),D^2\phi(x_n^\varepsilon))\leq 0.
\end{equation}
On the other hand,
\begin{equation*}
  \overline{H}(x,p,A)=\lim\limits_{l\to\infty}\uparrow H^{M_0}(x,lp,lA),\ (x,p,A)\in \mathbb{R}^N\times\mathbb{R}^N\times\mathcal{S}^N.
\end{equation*}
Passing in (\ref{th3.4.5}) to the limit as $n\to \infty$ yields $\bar{w}(x)+H^{M_0}(x,l D\phi_\varepsilon(x),l D^2\phi_\varepsilon(x))\leq 0$, which in turn implies
$$\bar{w}(x)+\overline{H}(x,D\phi_\varepsilon(x),D^2\phi_\varepsilon(x))\leq 0,$$
by taking first the limit as $\varepsilon\to 0$ and after the supremum over $l>0$, i.e., we have shown (\ref{r19}).

Now, with the same $\phi$ and the same $x\in\theta$ as above, from (\ref{r19}) it follows that, for any $\lambda>0$,
$$\bar{w}(x)+H^{M_0}(x,\frac{1}{\lambda}D\phi(x),\frac{1}{\lambda}D^2\phi(x))\leq \bar{w}(x)+\overline{H}(x,D\phi(x),D^2D\phi(x))\leq 0,$$
i.e., $\bar{w}$ is a (continuous) viscosity subsolution of (\ref{rl7}) in $\theta$. Since $w_\lambda$ is a continuous constrained viscosity solution of (\ref{rl7}), from Theorem \ref{th1} (comparison principle) we have that
$$\bar{w}(x)\leq w_\lambda(x), \ \mbox{for\ all}\ x\in\overline{\theta}.$$
Taking the limit as $\lambda\rightarrow 0^+$ yields $w_0\geq \bar{w}$ and completes the proof.\\
\end{proof}
We now give an application of the above Theorem \ref{the:3.4}, which generalize the results in \cite{Marc 2015}.

For this recall that the second order superjet at $x\in\overline{\theta}$ for a function $u\in USC(\overline{\theta})$ is defined by
\begin{equation*}
  J^{2,+}u(x)=\{(D\varphi(x),D^2\varphi(x)): \varphi\in C^2(\mathbb{R}^N), u-\varphi\leq u(x)-\varphi(x)\ \mbox{on}\ \overline{\theta}\},
\end{equation*}
while, for $v\in LSC(\overline{\theta})$,
\begin{equation*}
  J^{2,-}v(x)=\{-(p,A)|(p,A)\in J^{2,+}(-v)(x)\}
\end{equation*}
defines the second order subjet.

\begin{corollary}\label{c1}
Under the same assumptions in Theorem \ref{the:3.4}, we have, for all $x\in\theta$,
\begin{equation*}
  \{(p,A)\in J^{2,+}w_0(x): \sup\limits_{l>0}H(x,lp,lA)=+\infty\}=\emptyset.
\end{equation*}
\end{corollary}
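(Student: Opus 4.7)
The plan is to argue by contradiction using the subsolution property of $w_0$ established in Theorem \ref{the:3.4} together with the uniform bound on $w_0$ inherited from Lemma \ref{lem:2.6}. Suppose there exist $x\in\theta$ and $(p,A)\in J^{2,+}w_0(x)$ with $\sup_{l>0}H(x,lp,lA)=+\infty$. By the very definition of
$$\overline{H}(x,p,A)=\min\Big\{M_0,\sup_{l>0}H(x,lp,lA)\Big\},$$
this immediately forces $\overline{H}(x,p,A)=M_0$.

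Next I would unfold the superjet: pick $\varphi\in C^2(\mathbb{R}^N)$ with $D\varphi(x)=p$, $D^2\varphi(x)=A$ and $w_0-\varphi\le w_0(x)-\varphi(x)$ on $\overline{\theta}$. Since $x$ lies in the open set $\theta$, this is in particular a local maximum of $w_0-\varphi$ inside $\theta$. By Theorem \ref{the:3.4}, $w_0$ is a viscosity subsolution of $W(\cdot)+\overline{H}(\cdot,DW,D^2W)=0$ on $\theta$, so the subsolution inequality at $x$ combined with the previous step gives
$$w_0(x)+M_0=w_0(x)+\overline{H}(x,p,A)\le 0,\qquad\text{i.e.,}\qquad w_0(x)\le -M_0.$$

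Finally, I would invoke Lemma \ref{lem:2.6}(ii), which states that $|\lambda V_\lambda(y)|\le M$ uniformly in $\lambda>0$ and $y\in\overline{\theta}$; passing to the limit $\lambda\downarrow 0$ yields $|w_0|\le M$ on $\overline{\theta}$. In Theorem \ref{the:3.4} the constant $M_0$ is chosen ``large enough and independent of $\lambda$''; in particular, one may (and should) take $M_0>M$, for which the inequality $w_0(x)\ge -M>-M_0$ directly contradicts $w_0(x)\le -M_0$.

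The only delicate point in this plan is the bookkeeping around the constant $M_0$: one must make explicit that ``$M_0$ large enough'' in the hypothesis (H) of Theorem \ref{the:3.4} means $M_0$ is strictly larger than the supremum bound $M$ produced by the BSDE estimates of Proposition \ref{th:2.4} and Lemma \ref{lem:2.6}. Once this is granted, the argument is just a one-line application of the subsolution property combined with the definition of $\overline{H}$, and the corollary falls out. All other ingredients (existence of the test function from the superjet, validity of the subsolution inequality under the $J^{2,+}$-formulation, continuity of $w_0$) are standard and already used in the proof of Theorem \ref{the:3.4}.
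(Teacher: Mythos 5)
Your first half coincides exactly with the paper's proof: from $\sup_{l>0}H(x,lp,lA)=+\infty$ you get $\overline{H}(x,p,A)=M_0$, and the viscosity subsolution property (\ref{r18}) applied to a test function realizing $(p,A)\in J^{2,+}w_0(x)$ yields $w_0(x)\le -M_0$. The difference, and the weak point, is how you close the contradiction. You invoke Lemma \ref{lem:2.6}(ii) to get $|w_0|\le M$ and then insist that $M_0$ be chosen strictly larger than $M$. But Corollary \ref{c1} is stated under the assumptions of Theorem \ref{the:3.4}, i.e.\ in the general PDE framework of Section 3 where $H$ is an abstract Hamiltonian satisfying $(A_\theta)$, $(A_H)$, (H) and (\ref{r15}); Lemma \ref{lem:2.6} belongs to the stochastic control setting (it needs (H1)--(H3) and the representation of $V_\lambda$ through the BSDE) and is simply not available here. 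Under hypothesis (H) alone the only a priori bound is $|w_0|\le M_0$, which is compatible with $w_0(x)=-M_0$ and gives no contradiction. Moreover, even in the control case the paper only requires $M_0\ge\max\{\overline{c}_0,M\}$, so $M_0=M$ is admissible and your strict inequality $-M>-M_0$ can fail unless you separately argue that $M_0$ may be enlarged.

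The paper closes the argument without any external bound: it observes that if the construction works for $M_0$ it works verbatim for every $M\ge M_0$ (since $\mbox{Lip}_{M_0}(\overline{\theta})\subset \mbox{Lip}_{M}(\overline{\theta})$ and $w_0\in\mathcal{S}_{H,M}$ for all such $M$), so the same computation forces $w_0(x)=-M$ for every $M\ge M_0$, which is impossible for a single real number. This is essentially the rigorous version of your remark that one ``may and should'' enlarge $M_0$: the correct fix is not to import the constant $M$ from Lemma \ref{lem:2.6}, but to note that the cap in $\overline{H}$ can be raised freely while $w_0$ and the bound $|w_0|\le M_0$ stay fixed. With that replacement your proof becomes the paper's; as written, the appeal to Lemma \ref{lem:2.6} is a genuine gap in the generality in which the corollary is stated.
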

\begin{proof}
Assume that, for some $x\in\theta$, $(p,A)\in J^{2,+}w_0(x)$ and
\begin{equation}\label{b1}
 \sup\limits_{l>0}H(x,lp,lA)=+\infty.
\end{equation}
From (\ref{b1}), $\overline{H}(x,p,A)=M_0$. Furthermore, from (\ref{r18}), in viscosity sense,
\begin{equation*}
  w_0(x)+\overline{H}(x,Dw_0(x),D^2w_0(x))\leq0,\ x\in\theta.
\end{equation*}
Consequently, since $w_0\in Lip_{M_0}(\overline{\theta})$ we get $0\geq w_0(x)+\overline{H}(x,p,A)=w_0(x)+M_0$. Therefore, $w_0(x)\leq-M_0$. But, on the other hand $w_0\in Lip_{M_0}(\overline{\theta})$ implies $|w_0(x)|\leq M_0$. Hence, $w_0(x)=-M_0$. As for all $M\geq M_0$, $w_0\in\mathcal{S}_{H,M}$ and $w_0\in Lip_{M}(\overline{\theta})$, the same argument also gives $w_0(x)=-M$. This is a contradiction, and it follows that there cannot exist $(p,A)\in J^{2,+}w_0(x)$ with $\sup\limits_{l>0}H(x,lp,lA)=+\infty$.
\end{proof}
\begin{corollary}\label{c2}
Under the same assumptions in Theorem \ref{the:3.4} we suppose that, for all $x\in\theta, (p,A)\in(\mathbb{R}^N\backslash\{0\})\times\mathcal{S}^N$, $\sup\limits_{l>0}H(x,lp,lA)=+\infty$. Then, $w_0$ is a constant on $\overline{\theta}$.
\end{corollary}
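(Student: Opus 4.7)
The plan is to combine Corollary \ref{c1} with the extra hypothesis of Corollary \ref{c2} to see that $J^{2,+}w_0(x)\subseteq\{0\}\times\mathcal{S}^N$ at every $x\in\theta$, and then to convert this superjet degeneracy --- via a penalization argument --- into a quadratic upper bound on $w_0$ around every interior point. Combined with Rademacher's theorem and the Lipschitz character of $w_0$, this will force $\nabla w_0=0$ Lebesgue-a.e.\ on $\theta$ and so $w_0$ constant on $\overline{\theta}$. (I implicitly assume $\theta$ is connected; otherwise the conclusion reads componentwise.)

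\textbf{Step 1.} Let $x\in\theta$ and $(p,A)\in J^{2,+}w_0(x)$. Corollary \ref{c1} gives $\sup_{l>0}H(x,lp,lA)<+\infty$, while the hypothesis of Corollary \ref{c2} says this supremum is $+\infty$ as soon as $p\neq 0$. Hence $p=0$, and therefore
$$J^{2,+}w_0(x)\subseteq\{0\}\times\mathcal{S}^N \quad \mbox{for every } x\in\theta.$$
\textbf{Step 2.} Fix $x_1\in\theta$ and set $d:=\mbox{dist}(x_1,\partial\theta)>0$. For $\alpha>0$ consider $\Phi_\alpha(x):=w_0(x)-\alpha|x-x_1|^2$ on $\overline{\theta}$, and let $\bar{x}_\alpha$ be a maximum point. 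Since $\Phi_\alpha(\bar{x}_\alpha)\geq \Phi_\alpha(x_1)=w_0(x_1)$ and $|w_0|\leq M_0$, one has $|\bar{x}_\alpha-x_1|^2\leq 2M_0/\alpha$; so for $\alpha>2M_0/d^2$ the point $\bar{x}_\alpha$ lies in $\theta$, and the test function $\varphi(x):=w_0(\bar{x}_\alpha)+\alpha(|x-x_1|^2-|\bar{x}_\alpha-x_1|^2)$ yields $(2\alpha(\bar{x}_\alpha-x_1),2\alpha I)\in J^{2,+}w_0(\bar{x}_\alpha)$. Step 1 then forces $\bar{x}_\alpha=x_1$, so $\Phi_\alpha(x)\leq \Phi_\alpha(x_1)=w_0(x_1)$ for every $x\in\overline{\theta}$. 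Taking the infimum over $\alpha>2M_0/d^2$ gives the quadratic upper bound
$$w_0(y)-w_0(x_1)\leq \frac{2M_0}{d^2}|y-x_1|^2 \qquad \forall\, y\in\overline{\theta}.$$

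\textbf{Step 3.} Since $w_0\in\mbox{Lip}_{M_0}(\overline{\theta})$, Rademacher's theorem yields differentiability of $w_0$ Lebesgue-a.e.\ on $\theta$. At any such differentiability point $x_1\in\theta$ with $\nabla w_0(x_1)=p_0$, choosing $y=x_1+t e$ ($|e|=1$, $t\downarrow 0^+$) in the expansion $w_0(y)=w_0(x_1)+p_0\cdot(y-x_1)+o(|y-x_1|)$ and inserting it into the quadratic bound of Step 2 gives $t\,p_0\cdot e+o(t)\leq (2M_0/d^2)\, t^2$; dividing by $t>0$ and sending $t\downarrow 0^+$ yields $p_0\cdot e\leq 0$. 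As this holds for every unit vector $e$, $p_0=0$. Hence $\nabla w_0=0$ Lebesgue-a.e.\ on $\theta$, and the standard Lipschitz/ACL argument (integration along almost every line segment in $\theta$) then gives that $w_0$ is constant on every connected component of $\theta$; by continuity, it is constant on $\overline{\theta}$. The main obstacle in this plan is the chain of classical tools used in Step 3 --- Rademacher's theorem, the ACL characterization of Lipschitz functions, and the connectedness of $\theta$ --- which together upgrade the first-order superjet degeneracy into a genuine constancy statement; once this is in place, the penalization in Step 2 and the superjet identification in Step 1 link together cleanly.
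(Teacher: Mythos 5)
Your proof is correct, and your Step 1 (combining Corollary \ref{c1} with the extra hypothesis to conclude $J^{2,+}w_0(x)\subseteq\{0\}\times\mathcal{S}^N$ for every $x\in\theta$) is exactly the mechanism the paper exploits; the divergence is in how this superjet degeneracy is converted into constancy. The paper fixes $x_0\in\theta$ and, for each $\varepsilon>0$, posits a single global test function $\varphi_\varepsilon$ equal to $\frac{\varepsilon}{2}|x-x_0|^2$ deep inside $\theta$, exceeding $3M_0$ on $\theta^C$, with no critical point other than $x_0$; the maximum of $w_0-\varphi_\varepsilon$ over $\overline{\theta}$ is then forced into $\theta$ and, by the degeneracy, must be $x_0$ itself, whence $w_0(x_0)\geq w_0(x)-\varphi_\varepsilon(x)\to w_0(x)$ as $\varepsilon\downarrow0$, and constancy follows from the arbitrariness of $x_0$. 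You instead use the standard quadratic penalization $w_0-\alpha|\cdot-x_1|^2$ to extract the bound $w_0(y)-w_0(x_1)\leq (2M_0/d^2)|y-x_1|^2$, and then Rademacher plus the ACL argument to get $\nabla w_0=0$ a.e.\ and local constancy. Your route buys an explicit test function whose existence is immediate (the paper only asserts, and does not construct, a $\varphi_\varepsilon$ with properties i)--iii), which is not entirely innocent for domains with nontrivial topology), at the price of the measure-theoretic machinery and of needing $\theta$ connected, which you correctly flag --- the paper's statement does not assume connectedness, though its own construction implicitly requires it as well. One simplification worth noting: Step 3 can be made elementary, since on any ball $B\subset\subset\theta$ your Step 2 applied at both endpoints gives the symmetric bound $|w_0(a)-w_0(b)|\leq C_B|a-b|^2$, and subdividing the segment $[a,b]$ into $n$ equal pieces yields $|w_0(a)-w_0(b)|\leq C_B|a-b|^2/n\to 0$, so $w_0$ is locally constant on $\theta$ without invoking Rademacher.
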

\begin{proof}
Let $x_0\in \theta$ and $\varphi_\varepsilon\in C^2(\mathbb{R}^N)$, $\varepsilon>0$, be such that

i) $\varphi_\varepsilon(x)=\left\{
\begin{array}{lll}
\frac{\varepsilon}{2}|x-x_0|^2,\ x\in\theta\ \mbox{with\ dist}(x,\partial\theta)\geq\frac{1}{2} \mbox{dist}(x_0,\partial\theta)\ (>0),\\
\geq3M_0,\ x\in \theta^C,
\end{array}
\right.$

ii) $D{\varphi_\varepsilon}(x)\neq0,\ x\in\mathbb{R}^N\setminus\{x_0\},$

iii) $\varphi_\varepsilon(x)\to0\ (\varepsilon\downarrow0),$ for all $x\in\theta$.

\noindent For $\psi_\varepsilon(x)=w_0(x)-\varphi_\varepsilon(x)$, $x\in\overline{\theta}$, let $x_\varepsilon\in\overline{\theta}$ be the maximum point of $\psi_\varepsilon$. As for all $x'\in\partial\theta, \psi_\varepsilon(x')\leq w_0(x')-3M_0\leq-2M_0$ (Recall that $w_0\in \mbox{Lip}_{M_0}(\overline{\theta})$), and $\psi_\varepsilon(x_\varepsilon)\geq-M_0.$\ It follows that $x_\varepsilon\in\theta$. Since $(D\varphi_\varepsilon(x_\varepsilon),D^2\varphi_\varepsilon(x_\varepsilon))\in J^{2,+}w_0(x_\varepsilon)$, we get from Corollary \ref{c1} that $\sup\limits_{l>0}H(x_\varepsilon,lD\varphi_\varepsilon(x_\varepsilon),lD^2\varphi_\varepsilon(x_\varepsilon))<+\infty$.

Hence, from the assumptions of Corollary \ref{c2}, $D\varphi_\varepsilon(x_\varepsilon)=0$, i.e., $x_\varepsilon=x_0,\ \varepsilon>0$. Consequently,
$$w_0(x_0)=\psi_\varepsilon(x_0)\geq\psi_\varepsilon(x)=w_0(x)-\varphi_\varepsilon(x)\to w_0(x),\ \mbox{as}\ \varepsilon\to 0,\ \mbox{for\ all}\ x\in \theta.$$
Then, from the arbitrariness of $x_0\in\theta$ it follows that $w_0(x)=w_0(x_0)$, for all $x\in\theta$, and from the continuity of $w_0$ on $\overline{\theta}$ we, finally, have $w_0(x)=w_0(x_0), x\in\overline{\theta}$.

\end{proof}

\section{ {\protect \large Convergence problem for the optimal control}}
After a more general discussion in the previous section, in this part we consider the Hamiltonian $H$ of the form
\begin{equation}\label{r20}
  H(x,p,A):=\sup\limits_{u\in U}\{{\langle -p,b(x,u)\rangle}-\frac{1}{2}tr(\sigma\sigma^*(x,u)A)-\psi(x,p\sigma(x,u),u)\}, \ (x,p,A)\in\mathbb{R}^N\times\mathbb{R}^N\times\mathcal{S}^N.
\end{equation}
\begin{proposition}\label{th:3.3.1} Under the assumptions (\ref{r1}), (\ref{r48}) and (\ref{r49}) the value function $V_\lambda$ defined by (\ref{r94}) is a viscosity solution of the Hamilton-Jacobi-Bellman equation
\begin{equation}\label{r8}
\lambda V(x)+H(x,DV(x),D^2V(x))=0,\ x\in\overline{\theta},
\end{equation}
where $H(x,p,A)$ is defined by (\ref{r20}).
\end{proposition}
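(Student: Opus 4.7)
The plan is to use Peng's BSDE approach to PDEs: combine the dynamic programming principle (DPP), whose proof is postponed to the Appendix, with It\^{o}'s formula and the comparison theorem for BSDEs with Lipschitz generators. For brevity, write $\mathcal{L}^u\varphi(x) = \langle D\varphi(x), b(x,u)\rangle + \tfrac{1}{2}\mathrm{tr}(\sigma\sigma^*(x,u)D^2\varphi(x))$. The DPP will provide: for every $x\in\overline{\theta}$ and every bounded $\mathbb{F}$-stopping time $\tau>0$,
\[
V_\lambda(x) \;=\; \inf_{u \in \mathcal{U}} \widetilde{Y}^{\lambda,x,u,\tau}_0,
\]
where $(\widetilde{Y}^{\lambda,x,u,\tau}, \widetilde{Z}^{\lambda,x,u,\tau})$ solves on $[0,\tau]$ the BSDE with terminal condition $V_\lambda(X^{x,u}_\tau)$ and generator $\psi(X^{x,u}_s, z, u_s) - \lambda y$; the continuity of $V_\lambda$ on $\overline{\theta}$ (Lemma \ref{lem:2.6}) combined with the invariance of $\overline{\theta}$ makes this BSDE well posed.

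For the subsolution property, I would fix $\varphi\in C^2(\mathbb{R}^N)$ and $x\in\overline{\theta}$ such that $V_\lambda-\varphi$ attains a local maximum at $x$ with $V_\lambda(x)=\varphi(x)$, and argue by contradiction: assume there exist $u_0\in U$ and $\varepsilon, r>0$ with $\lambda\varphi(y) - \mathcal{L}^{u_0}\varphi(y) - \psi(y, D\varphi(y)\sigma(y,u_0), u_0) \geq 2\varepsilon$ on $B_r(x)$. Using the constant control $u_0$ and the stopping time $\tau = h\wedge \inf\{t:X^{x,u_0}_t\notin B_r(x)\}$ for small $h>0$, It\^{o}'s formula identifies $\varphi(X^{x,u_0}_\cdot)$ on $[0,\tau]$ as the $Y$-component of a BSDE with terminal $\varphi(X^{x,u_0}_\tau)$ and driver $(\lambda\varphi-\mathcal{L}^{u_0}\varphi)(X_s) - \lambda y$. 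Comparing this with the BSDE with the same terminal but driver $\psi(X_s,z,u_0) - \lambda y$, the $z$-Lipschitz hypothesis on $\psi$ produces a bounded linearization process $\gamma$ with $|\gamma|\le K_z$ (exactly as in the uniqueness proof of Proposition \ref{th:2.4}); a Girsanov change of measure then expresses the difference of the two $Y$-values at time $0$ as a conditional expectation bounded below by $2\varepsilon\,\mathbb{E}[\int_0^\tau e^{-\lambda s}ds] \geq c h$ for some $c>0$. Since $V_\lambda\leq\varphi$ on $B_r(x)\cap\overline{\theta}$ and $X^{x,u_0}_\tau$ remains in this set by invariance, monotonicity of BSDEs in the terminal condition gives $\widetilde{Y}^{\lambda,x,u_0,\tau}_0 \leq \varphi(x) - ch$, and the DPP yields $V_\lambda(x)\leq \widetilde{Y}^{\lambda,x,u_0,\tau}_0 < V_\lambda(x)$, a contradiction.

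The supersolution property will be treated symmetrically: assuming that $V_\lambda - \varphi$ has a local minimum at $x$ with $V_\lambda(x)=\varphi(x)$ and, by contradiction, that $\sup_{u\in U}[\lambda\varphi(x) - \mathcal{L}^u\varphi(x) - \psi(x, D\varphi(x)\sigma(x,u), u)] \leq -2\varepsilon < 0$, the compactness of $U$ together with the uniform continuity of $b$, $\sigma$, $\psi$ and of $\varphi, D\varphi, D^2\varphi$ on $\overline{\theta}$ propagates the bound to a ball $B_r(x)$ uniformly in $u\in U$. For each $u\in\mathcal{U}$, with $\tau_u = h\wedge \inf\{t: X^{x,u}_t\notin B_r(x)\}$, the same It\^{o}-plus-linearization argument yields $\widetilde{Y}^{\lambda,x,u,\tau_u}_0 \geq \varphi(x) + ch$ with $c>0$ independent of $u$. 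Taking the infimum over $u\in\mathcal{U}$ and invoking the DPP produces the contradiction $V_\lambda(x) \geq \varphi(x) + ch > V_\lambda(x)$. The main obstacle here is precisely this uniformity in $u\in\mathcal{U}$ for the supersolution step: one must check that the linearization bound $K_z$, the lower tail of $\tau_u$, and the coefficient of $h$ in the BSDE estimate depend only on the data (\ref{r1})--(\ref{r48}) and on $\varphi$, never on the specific admissible control; this follows from the compactness of $U$ together with the uniform moment estimates of Lemma \ref{l:2.1} and the a priori bounds of Proposition \ref{th:2.4}.
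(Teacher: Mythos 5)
Your overall strategy is sound and would reach the right conclusion, but it is a genuinely different route from the paper's. The paper does not argue by contradiction: it fixes a \emph{deterministic} horizon $t$, represents $G^{\lambda,x,u}_{0,t}[\varphi(X^{x,u}_t)]-\varphi(x)$ as the solution $Y^{1,u}_0$ of an auxiliary BSDE (Lemma \ref{lem1}), freezes the forward process at its initial point $x$ at the cost of an error $O(t^{3/2})$ \emph{uniform in} $u$ (Lemma \ref{lem2}, which is why the paper takes $\varphi\in C^3$), identifies $\inf_{u}Y^{2,u}_0$ with the solution of an explicit ODE (Lemma \ref{lem3}), and then divides by $t$ and lets $t\downarrow 0$. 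In particular the uniformity in $u\in\mathcal{U}$ that you correctly single out as the main obstacle in the supersolution step is obtained there for free from Lemma \ref{lem2}; your route instead secures it through the uniform moment estimates of Lemma \ref{l:2.1} and the uniform bound $|\gamma|\le K_z$ in the Girsanov linearization, which is legitimate. What your comparison-theorem argument buys is that only $C^2$ test functions and no rate in $t$ are needed; what the paper's buys is that no stopping times and no exceptional events ever appear.

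The one genuine gap is your invocation of the DPP at the bounded stopping time $\tau=h\wedge\inf\{t: X^{x,u}_t\notin B_r(x)\}$. Proposition \ref{dpp} is stated and proved in the Appendix only for deterministic $t$ (the proof rests on the shift operator $\theta_t$ and the identities (\ref{A3})--(\ref{A4}), which are formulated for fixed times), and the stopping-time version is not a formal consequence; it would require an additional approximation argument (discretizing $\tau$ and using the $L^2$-continuity of the backward semigroup together with the strong Markov property). Alternatively you can avoid stopping times altogether, as the paper implicitly does: choose the test function so that $V_\lambda-\varphi$ attains a \emph{global} extremum on $\overline{\theta}$ (always admissible for viscosity solutions), so that the invariance of $\overline{\theta}$ gives the terminal comparison $V_\lambda(X^{x,u}_t)\lessgtr\varphi(X^{x,u}_t)$ for every deterministic $t$, and note that the event $\{\sup_{s\le t}|X^{x,u}_s-x|\ge r\}$ on which your strict driver inequality may fail has probability $O(t^p)$ for every $p\ge 1$, uniformly in $u$, by Lemma \ref{l:2.1}; since all quantities are bounded on $\overline{\theta}$ (Proposition \ref{th:2.4}), its contribution is $o(t)$ and the contradiction survives after dividing by $t$.
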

\begin{remark}
Proposition \ref{th:3.3.1} shows that $V_\lambda$ defined by (\ref{r94}) is in $Lip_{\frac{M_0}{\lambda}}(\overline{\theta})$ and it is a viscosity solution on $\overline{\theta}$ of (\ref{r8}), i.e., a super-but also a subsolution on $\overline{\theta}$. Thus, $V_\lambda$ is, in particular also a constrained viscosity solution of (\ref{r8}), but unlike a constrained viscosity solution, $V_\lambda$ also satisfies, in viscosity sense,
\begin{equation*}
  \lambda V_\lambda(x)+H(x,DV_\lambda(x),D^2V_\lambda(X))\leq0,\ x\in\partial\theta,
\end{equation*}
i.e., for all $x\in\partial\theta$, $\varphi\in C^2(\mathbb{R}^N)$ with $V-\varphi\leq V(x)-\varphi(x)$, it holds
\begin{equation*}
  \lambda V_\lambda(x)+H(x,D\varphi(x),D^2\varphi(x))\leq0.
\end{equation*}

\end{remark}

The proof of Proposition \ref{th:3.3.1} uses Peng's BSDE method developed in \cite{Peng 1997}. To prove the proposition we need the dynamic programming principle (DPP) and the following three lemmas based on the notion of stochastic backward semigroups introduced by Peng \cite{Peng 1997}.

Given the initial value $x$ at time $t=0$ of SDE (\ref{r2}), $u(\cdot)\in\mathcal{U}$ and $\eta\in L^2(\Omega,\mathcal{F}_t,\mathbb{P})$, we define a
stochastic backward semigroup: For given $\lambda>0,\ x\in\overline{\theta},\ u\in\mathcal{U},\ t\in\mathbb{R}_+$, we put
\begin{equation*}
  G_{s,t}^{\lambda,x,u}[\eta]:=Y_s^\eta,\ s\in[0,t],\ \eta\in L^2(\Omega,\mathcal{F}_t,\mathbb{P}),
\end{equation*}
where $(Y_s^\eta)_{s\in[0,t]}$ is the unique solution of the BSDE
 \begin{equation*}
   \left\{
\begin{array}{ll}
dY_s^\eta=-(\psi(X_s^{x,u},Z_s^\eta,u_s)-\lambda Y_s^\eta)ds+Z_s^\eta dW_s,\ s\in[0,t],\\
Y_t^\eta=\eta.
\end{array}
\right.
 \end{equation*}
\begin{proposition}\label{dpp}(DPP)
 Under the assumptions (\ref{r1}), (\ref{r48}) and (\ref{r49}), for all $\lambda>0,\ x\in\mathbb{R}^N$ and $t\geq0$, it holds
 \begin{equation*}
   V_\lambda(x)=\inf\limits_{u\in\mathcal{U}}G_{0,t}^{\lambda,x,u}[V_\lambda(X_t^{x,u})].
 \end{equation*}
\end{proposition}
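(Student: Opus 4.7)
The plan is to adapt Peng's classical BSDE approach to the DPP (as in \cite{Peng 1997}) to our infinite-horizon setting, proving the two inequalities separately and relying on (i) a flow/uniqueness identity for $\overline{Y}^{\lambda,x,u}$, (ii) the comparison theorem for finite-horizon BSDEs, and (iii) a measurable-selection pasting argument together with the Lipschitz regularity already established in Lemma \ref{lem:2.6}.

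For the inequality $V_\lambda(x)\ge \inf_{u\in\mathcal{U}} G^{\lambda,x,u}_{0,t}[V_\lambda(X_t^{x,u})]$, I would first note that the restriction of $(\overline{Y}^{\lambda,x,u},\overline{Z}^{\lambda,x,u})$ to $[0,t]$ solves the defining BSDE of the backward semigroup with terminal value $\overline{Y}_t^{\lambda,x,u}$, so by uniqueness
\begin{equation*}
\overline{Y}_0^{\lambda,x,u}=G^{\lambda,x,u}_{0,t}[\overline{Y}_t^{\lambda,x,u}].
\end{equation*}
Next, using the flow property of the SDE (\ref{r2}) and the uniqueness established in Proposition \ref{th:2.4}, one checks that $\overline{Y}_t^{\lambda,x,u}=\overline{Y}_0^{\lambda,X_t^{x,u},u^t}$ $\mathbb{P}$-a.s., where $u^t=u(t+\cdot)$ is the shifted control (here one argues by conditioning on $\mathcal{F}_t$ and using that the law of $W_{t+\cdot}-W_t$ coincides with that of $W$). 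Consequently $\overline{Y}_t^{\lambda,x,u}\ge V_\lambda(X_t^{x,u})$, and Peng's comparison theorem for the BSDE on $[0,t]$ yields $\overline{Y}_0^{\lambda,x,u}\ge G^{\lambda,x,u}_{0,t}[V_\lambda(X_t^{x,u})]$; taking $\inf_u$ finishes this direction.

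For the reverse inequality $V_\lambda(x)\le \inf_{u\in\mathcal{U}} G^{\lambda,x,u}_{0,t}[V_\lambda(X_t^{x,u})]$, I would use a standard pasting argument. Fix $u\in\mathcal{U}$ and $\varepsilon>0$. Partition $\overline{\theta}$ into Borel sets $(O_i)_{i\ge 1}$ of diameter at most $\delta$, pick representatives $y_i\in O_i$ and, via the definition of $V_\lambda$, admissible controls $u^i\in\mathcal{U}$ with $\overline{Y}_0^{\lambda,y_i,u^i}\le V_\lambda(y_i)+\varepsilon$. Define
\begin{equation*}
\tilde u_s := u_s\,\mathbf{1}_{\{s<t\}}+\sum_{i\ge 1} u^i_{s-t}\,\mathbf{1}_{\{s\ge t\}}\mathbf{1}_{\{X_t^{x,u}\in O_i\}},
\end{equation*}
which is $\mathbb{F}$-adapted and belongs to $\mathcal{U}$. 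Using the Lipschitz property of $V_\lambda$ (Lemma \ref{lem:2.6}(i)) together with a continuity estimate for $y\mapsto\overline{Y}_0^{\lambda,y,\cdot}$ (proved in the same way as Lemma \ref{lem:2.6} via Proposition \ref{p:2.1} and a Girsanov argument), one gets $\overline{Y}_t^{\lambda,x,\tilde u}\le V_\lambda(X_t^{x,u})+C(\varepsilon+\delta)$ on each cell. The comparison theorem for the BSDE on $[0,t]$, together with an $L^2$-stability estimate for $G^{\lambda,x,u}_{0,t}[\cdot]$ under perturbation of the terminal value, then gives
\begin{equation*}
V_\lambda(x)\le \overline{Y}_0^{\lambda,x,\tilde u}=G^{\lambda,x,u}_{0,t}[\overline{Y}_t^{\lambda,x,\tilde u}]\le G^{\lambda,x,u}_{0,t}[V_\lambda(X_t^{x,u})]+C'(\varepsilon+\delta),
\end{equation*}
and sending $\delta,\varepsilon\downarrow 0$ and then taking $\inf_u$ concludes.

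The main technical obstacle will be the rigorous construction and admissibility check of the pasted control $\tilde u$, together with the identification $\overline{Y}_t^{\lambda,x,u}=\overline{Y}_0^{\lambda,X_t^{x,u},u^t}$ as a bona fide $\mathcal{F}_t$-conditional identity (care is needed because $u^t$ is a priori $\mathbb{F}^t$-adapted, not $\mathbb{F}$-adapted, so one must either work with regular conditional probabilities or invoke a shift-invariance argument for the infinite-horizon BSDE). The $L^2$-stability of the finite-horizon backward semigroup in its terminal datum, despite the $z$-dependence of $\psi$, is handled by the same Girsanov linearization already exploited in the uniqueness part of Proposition \ref{th:2.4}. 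Once these technical points are in place, the two inequalities combine to give the DPP. Since the argument is long but routine, it is deferred to the Appendix as announced in the Introduction.
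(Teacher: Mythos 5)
Your proposal is correct and follows essentially the same route as the paper's Appendix proof: both establish the two inequalities via Peng's backward stochastic semigroup, the identification of $\overline{Y}_t^{\lambda,x,u}$ with the value of the shifted problem started from $X_t^{x,u}$ (which the paper formalizes through the translation operator $\theta_t$ in Lemma 5.1, exactly the shift-invariance device you anticipate), the comparison theorem, and the $L^2$-Lipschitz continuity of $G_{0,t}^{\lambda,x,u}[\cdot]$ in its terminal datum. The only presentational difference is that the paper packages the $\varepsilon$-optimal control selection through the essential-infimum identity $V_\lambda(X_t^{x,u})=\essinf_{v}Y_t^{\lambda,x,u\oplus\overline{v}}$ and a reference to the standard argument of \cite{Peng 1997}, whereas you spell out the underlying Borel-partition pasting explicitly.
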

The proof of the DPP uses arguments, which are for the case of control problems with finite time horizon rather standard by now (see, e.g., \cite{Buckdahn 2008}). But here the time horizon is infinite, and for convenience we give the proof in the Appendix. Let us give now three auxiliary lemmas.

For this, given a test function $\varphi\in C^3(\mathbb{R}^N)$, we define, for all $(x,y,z,u)\in\mathbb{R}^N\times\mathbb{R}\times\mathbb{R}^d\times U,$
\begin{equation*}
  \Phi(x,y,z,u):=\langle D\varphi(x),b(x,u)\rangle+\frac{1}{2}tr(\sigma\sigma^*(x,u)D^2\varphi(x))+\psi(x,z+D\varphi(x)\sigma(x,u),u)-\lambda\cdot(y+\varphi(x)).
\end{equation*}
Let $(Y_s^{1,u},Z_s^{1,u})_{s\in[0,t]}\in\mathcal{S}_{\mathbb{F}}^2([0,t];\mathbb{R})\times\mathcal{H}^2_{\mathbb{F}}([0,t];\mathbb{R}^d)\footnotemark[1]$ \footnotetext[1]{$\mathcal{S}_{\mathbb{F}}^2([0,t];\mathbb{R}):=\{\phi=(\phi_{s})_{s\in[0,t]}:(\phi_{s\wedge t})_{s\geq0}\in\mathcal{S}_{\mathbb{F}}^2(\mathbb{R})\}$, $\mathcal{H}^2_{\mathbb{F}}([0,t];\mathbb{R}^d):=\{\varphi=(\varphi_s)_{s\in[0,t]}:(\varphi_s1_{[0,t]}(s))_{s\geq0}\in\mathcal{H}^2_{\mathbb{F}}(\mathbb{R}^d)\}.$}
be the unique solution of the BSDE
\begin{equation}\label{r95}
\left\{
\begin{array}{ll}
dY_s^{1,u}=-\Phi(X_s^{x,u},Y_s^{1,u},Z_s^{1,u},u_s)ds+Z_s^{1,u}dW_s,\ s\in[0,t],\\
Y_t^{1,u}=0.
\end{array}
\right.
\end{equation}
As $\Phi(x,\cdot,\cdot,u)$ is Lipschitz, uniformly in $(x,u)$, and $\Phi(x,0,0,u)$ is bounded on $\overline{\theta}\times U$, the existence and the uniqueness are by now standard.
\begin{lemma}\label{lem1}
$Y_s^{1,u}=G_{s,t}^{\lambda,x,u}[\varphi(X_t^{x,u})]-\varphi(X_s^{x,u}),\ s\in[0,t].$
\end{lemma}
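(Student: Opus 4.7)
The plan is to verify the identity by introducing the candidate shifted processes, applying It\^o's formula, and invoking uniqueness of the BSDE defining the backward semigroup.

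More precisely, I would set
\begin{equation*}
\widetilde Y_s := Y_s^{1,u} + \varphi(X_s^{x,u}), \qquad \widetilde Z_s := Z_s^{1,u} + D\varphi(X_s^{x,u})\sigma(X_s^{x,u},u_s),\quad s\in[0,t],
\end{equation*}
and show that $(\widetilde Y,\widetilde Z)$ satisfies exactly the BSDE whose solution is $G_{s,t}^{\lambda,x,u}[\varphi(X_t^{x,u})]$. At the terminal time, $\widetilde Y_t = 0 + \varphi(X_t^{x,u}) = \varphi(X_t^{x,u})$, which matches the terminal condition of the backward semigroup BSDE.

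Next I would apply It\^o's formula to $\varphi(X_s^{x,u})$ (which is legitimate because $\varphi\in C^3(\mathbb{R}^N)$ and the forward SDE (\ref{r2}) has the usual regularity from Lemma \ref{l:2.1}) to obtain
\begin{equation*}
d\varphi(X_s^{x,u}) = \Bigl[\langle D\varphi(X_s^{x,u}),b(X_s^{x,u},u_s)\rangle + \tfrac12 tr(\sigma\sigma^*(X_s^{x,u},u_s)D^2\varphi(X_s^{x,u}))\Bigr]ds + D\varphi(X_s^{x,u})\sigma(X_s^{x,u},u_s)\,dW_s.
\end{equation*}
Adding this to (\ref{r95}) and substituting the explicit expression of $\Phi$, the first- and second-order terms in $\varphi$ cancel, while the $\psi$ term is converted from an argument in $Z_s^{1,u}+D\varphi(X_s^{x,u})\sigma(X_s^{x,u},u_s)=\widetilde Z_s$ into $\psi(X_s^{x,u},\widetilde Z_s,u_s)$, and the linear term $-\lambda(Y_s^{1,u}+\varphi(X_s^{x,u})) = -\lambda \widetilde Y_s$ appears with the right sign. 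The upshot is
\begin{equation*}
d\widetilde Y_s = -\bigl(\psi(X_s^{x,u},\widetilde Z_s,u_s) - \lambda \widetilde Y_s\bigr)ds + \widetilde Z_s\,dW_s, \qquad \widetilde Y_t = \varphi(X_t^{x,u}).
\end{equation*}

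Finally, I would verify that $(\widetilde Y,\widetilde Z)\in\mathcal{S}_{\mathbb{F}}^2([0,t];\mathbb{R})\times\mathcal{H}^2_{\mathbb{F}}([0,t];\mathbb{R}^d)$; this follows from $(Y^{1,u},Z^{1,u})\in\mathcal{S}_{\mathbb{F}}^2\times\mathcal{H}^2_{\mathbb{F}}$, the boundedness of $\varphi, D\varphi$ on the compact set $\overline{\theta}$ (recall $X^{x,u}$ stays in $\overline\theta$), and the linear growth of $\sigma$. By the standard existence/uniqueness result for Lipschitz BSDEs on a finite horizon applied to the BSDE defining $G_{\cdot,t}^{\lambda,x,u}[\varphi(X_t^{x,u})]$, the identity $\widetilde Y_s = G_{s,t}^{\lambda,x,u}[\varphi(X_t^{x,u})]$ holds, which rearranged is the claim. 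No real obstacle is expected here: the statement is essentially the definitional identity behind Peng's backward semigroup approach, and the only care needed is the bookkeeping to see that the $\lambda\varphi(X_s^{x,u})$ term in $\Phi$ is precisely what is required to turn the linear term for $Y^{1,u}$ into the correct linear term for $\widetilde Y$.
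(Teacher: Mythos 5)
Your proposal is correct and follows essentially the same route as the paper: apply It\^o's formula to $\varphi(X_s^{x,u})$, observe that the first- and second-order terms cancel against those built into $\Phi$, and conclude by uniqueness of the finite-horizon Lipschitz BSDE; the only cosmetic difference is that you add $\varphi(X^{x,u})$ to $(Y^{1,u},Z^{1,u})$ and match the semigroup BSDE, whereas the paper subtracts $\varphi(X^{x,u})$ from the semigroup solution and matches (\ref{r95}).
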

\begin{proof}
Notice that $G_{s,t}^{\lambda,x,u}[\varphi(X_t^{x,u})]$ is defined by the solution of the following BSDE
\begin{equation*}
  \left\{
  \begin{array}{ll}
dY_s^{\varphi}=-( \psi(X_s^{x,u},Z_s^{\varphi},u_s)-\lambda Y_s^\varphi)ds+Z_s^{\varphi}dW_s,\ s\in[0,t],\\
Y_t^{\varphi}=\varphi(X_t^{x,u}),
\end{array}
  \right.
\end{equation*}
that is
\begin{equation*}
  G_{s,t}^{\lambda,x,u}[\varphi(X_t^{x,u})]=Y_s^{\varphi},\ s\in[0,t].
\end{equation*}
We only need to prove that $Y_s^{\varphi}-\varphi(X_s^{x,u})= Y_s^{1,u}$, $s\in[0,t]$. Applying It\^{o}'s formula to $\varphi(X_s^{x,u})$, it is obvious that $d(Y_s^{\varphi}-\varphi(X_s^{x,u}))= dY_s^{1,u}$. As $Y_t^{\varphi}-\varphi(X_t^{x,u})=0=Y_t^{1,u}$, it follows that $Y_s^{\varphi}-\varphi(X_s^{x,u})= Y_s^{1,u}$, $s\in[0,t]$.
\end{proof}
Now we consider BSDE (\ref{r95}) in which $X_s^{x,u}$ is replaced by its initial condition $X_0^{x,u}=x$:
\begin{equation}\label{r96}
\left\{
\begin{array}{ll}
dY_s^{2,u}=-\Phi(x,Y_s^{2,u},Z_s^{2,u},u_s)ds+Z_s^{2,u}dW_s,\ s\in[0,t],\\
Y_t^{2,u}=0.
\end{array}
\right.
\end{equation}
It has a unique solution $(Y^{2,u},Z^{2,u})\in\mathcal{S}_{\mathbb{F}}^2([0,t];\mathbb{R})\times\mathcal{H}_{\mathbb{F}}^2([0,t];\mathbb{R}^d)$.
\begin{lemma}\label{lem2}
We have $|Y_0^{1,u}-Y_0^{2,u}|\leq ct^{\frac{3}{2}},\ \mbox{for all}\ t\in [0, T],\ u\in\mathcal{U}$, where $c\in\mathbb{R}_+$ is independent of $u\in\mathcal{U}$ and depends only on $T>0$.
\end{lemma}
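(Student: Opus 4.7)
The plan is to compare the two BSDEs by taking differences and then using a Girsanov change of measure to remove the $Z$-dependence in the driver, reducing everything to an estimate on the deterministic-state residual $R_s := \Phi(X_s^{x,u},Y_s^{2,u},Z_s^{2,u},u_s)-\Phi(x,Y_s^{2,u},Z_s^{2,u},u_s)$.

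First, set $\widehat{Y}_s = Y_s^{1,u}-Y_s^{2,u}$ and $\widehat{Z}_s = Z_s^{1,u}-Z_s^{2,u}$. Subtracting (\ref{r95}) and (\ref{r96}) and inserting $\pm\Phi(X_s^{x,u},Y_s^{2,u},Z_s^{2,u},u_s)$ gives
\begin{equation*}
-d\widehat{Y}_s = \bigl[\Phi(X_s^{x,u},Y_s^{1,u},Z_s^{1,u},u_s)-\Phi(X_s^{x,u},Y_s^{2,u},Z_s^{2,u},u_s)\bigr]\,ds + R_s\,ds - \widehat{Z}_s\,dW_s,\quad \widehat{Y}_t=0.
\end{equation*}
From the definition of $\Phi$, the first bracket equals $-\lambda\widehat{Y}_s + \gamma_s\widehat{Z}_s$, where
\begin{equation*}
\gamma_s := \frac{\psi(X_s^{x,u},Z_s^{1,u}+D\varphi(X_s^{x,u})\sigma(X_s^{x,u},u_s),u_s)-\psi(X_s^{x,u},Z_s^{2,u}+D\varphi(X_s^{x,u})\sigma(X_s^{x,u},u_s),u_s)}{|\widehat{Z}_s|^2}\,\widehat{Z}_s^\ast
\end{equation*}
(and $\gamma_s=0$ when $\widehat{Z}_s=0$), so that $|\gamma_s|\le K_z$ by (\ref{r48}).

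Second, I would introduce the Girsanov measure $d\mathbb{P}_t^\gamma = L_t^\gamma d\mathbb{P}$ with $L_t^\gamma=\exp(\int_0^t\gamma_s dW_s-\tfrac12\int_0^t|\gamma_s|^2 ds)$, under which $W_s^\gamma := W_s-\int_0^s\gamma_r\,dr$ is a Brownian motion on $[0,t]$. Applying It\^o's formula to $e^{-\lambda s}\widehat{Y}_s$ and using $\widehat{Y}_t=0$ yields
\begin{equation*}
\widehat{Y}_0 = \mathbb{E}^\gamma\!\left[\int_0^t e^{-\lambda s}R_s\,ds\right] = \mathbb{E}\!\left[\int_0^t e^{-\lambda s}L_s^\gamma R_s\,ds\right],
\end{equation*}
once one checks that $(\int_0^\cdot e^{-\lambda s}\widehat{Z}_s dW_s^\gamma)$ is a true $\mathbb{P}^\gamma$-martingale on $[0,t]$ (standard, from $\widehat{Z}\in\mathcal{H}^2_{\mathbb{F}}([0,t];\mathbb{R}^d)$ and boundedness of $\gamma$).

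Third, I would bound $|R_s|$ pointwise by $C|X_s^{x,u}-x|$ for some constant $C$ depending only on $\varphi\in C^3$, the Lipschitz/growth constants of $b,\sigma$ from (\ref{r1}) and of $\psi$ from (\ref{r48}), and the compact set $\overline{\theta}$ (so that $D\varphi,D^2\varphi,\sigma\sigma^\ast$ and their gradients are bounded along the trajectory). Combined with the standard SDE estimate $\mathbb{E}[|X_s^{x,u}-x|^2]\le Cs$ for $s\in[0,T]$ (which follows from Lemma \ref{l:2.1} and boundedness of $b,\sigma$ on $\overline{\theta}\times U$) and with $\mathbb{E}[(L_s^\gamma)^2]\le e^{K_z^2 s}$, Cauchy--Schwarz gives
\begin{equation*}
\mathbb{E}[L_s^\gamma|R_s|] \le \bigl(\mathbb{E}[(L_s^\gamma)^2]\bigr)^{1/2}\bigl(\mathbb{E}[|R_s|^2]\bigr)^{1/2} \le C e^{K_z^2 T/2}\,s^{1/2},
\end{equation*}
and hence $|Y_0^{1,u}-Y_0^{2,u}| \le C\int_0^t s^{1/2}ds \le c\,t^{3/2}$ with $c$ independent of $u\in\mathcal{U}$. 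The main delicate point is the linearization step that produces $\gamma_s$ with $|\gamma_s|\le K_z$ together with the verification that the stochastic integral after the Girsanov change is a martingale; once that is in hand, the residual estimate $\mathbb{E}|R_s|^2\le Cs$ is routine and yields the $t^{3/2}$ rate via the integral of $s^{1/2}$.
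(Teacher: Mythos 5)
Your proof is correct, but it takes a genuinely different route from the paper. The paper's argument is the standard $L^2$ a priori stability estimate for the difference of two BSDE solutions: writing $|Y_0^{1,u}-Y_0^{2,u}|^2\leq \mathbb{E}[|Y_0^{1,u}-Y_0^{2,u}|^2+\int_0^t|Z_s^{1,u}-Z_s^{2,u}|^2ds]\leq c\,\mathbb{E}[(\int_0^t|\Phi(X_s^{x,u},Y_s^{2,u},Z_s^{2,u},u_s)-\Phi(x,Y_s^{2,u},Z_s^{2,u},u_s)|ds)^2]$, then using the uniform Lipschitz property of $x\mapsto\Phi(x,y,z,u)$ on $\overline{\theta}$ together with $\mathbb{E}[\sup_{s\in[0,t]}|X_s^{x,u}-x|^2]\leq ct$ to obtain $ct^2\cdot t=ct^3$, hence the $t^{3/2}$ rate, in three lines and without any change of measure. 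You instead linearize the driver in $z$, which is legitimate since the $(y,z)$-dependence of $\Phi$ enters only through $-\lambda y$ and $\psi(x,z+D\varphi(x)\sigma(x,u),u)$, so the increment in $z$ of the shifted argument is exactly $\widehat{Z}_s$ and $|\gamma_s|\leq K_z$ by (H2); the Girsanov transformation then yields the exact representation $\widehat{Y}_0=\mathbb{E}[\int_0^te^{-\lambda s}L_s^\gamma R_s\,ds]$ (your identity $\mathbb{E}[L_t^\gamma R_s]=\mathbb{E}[L_s^\gamma R_s]$ for $\mathcal{F}_s$-measurable $R_s$ is what lets you write $L_s^\gamma$ inside the integral), and the residual bound $\mathbb{E}[|R_s|^2]\leq Cs$ plus Cauchy--Schwarz gives $\int_0^ts^{1/2}ds\leq ct^{3/2}$. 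What your approach buys is an exact formula for the difference rather than a one-sided quadratic estimate, and it is consistent with the Girsanov/linearization machinery the paper uses in Proposition \ref{th:2.4} and Lemma \ref{lem:2.6}; what it costs is the extra verification that the transformed stochastic integral is a true $\mathbb{P}^\gamma$-martingale (which you correctly flag and which does follow from $\widehat{Z}\in\mathcal{H}^2_{\mathbb{F}}([0,t];\mathbb{R}^d)$ and the square-integrability of $L^\gamma$). Both arguments produce a constant depending only on $T$, $\varphi$, the coefficients, and $\overline{\theta}$, uniformly in $u\in\mathcal{U}$, so the conclusion is the same.
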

\begin{proof}
As $\overline{\theta}\subset\mathbb{R}^N$ is compact, $\varphi, D\varphi$ and $D^2\varphi$ are bounded and Lipschitz on $\overline{\theta}$. Combined with the boundedness and the Lipschitz property of $b(\cdot,u), \sigma(\cdot,u)$ which is uniform with respect to $u\in U$, this has the consequence that $\overline{\theta}\ni x\rightarrow\Phi(x,y,z,u)$ is Lipschitz, uniformly in $(y,z,u)$. Then, using BSDE and SDE standard estimates, we get
\begin{equation*}
\begin{array}{lll}
&\displaystyle|Y_0^{1,u}-Y_0^{2,u}|^2\leq\mathbb{E}[|Y_0^{1,u}-Y_0^{2,u}|^2+\int_0^t|Z_s^{1,u}-Z_s^{2,u}|^2ds]\\
\leq&\displaystyle c\mathbb{E}[(\int_0^t|\Phi(X_s^{x,u},Y_s^{2,u},Z_s^{2,u},u_s)-\Phi(x,Y_s^{2,u},Z_s^{2,u},u_s)|ds)^2]\\
\leq&\displaystyle ct^2\mathbb{E}[\sup\limits_{s\in[0,t]}|X_s^{x,u}-x|^2]\leq ct^3.\\
\end{array}
\end{equation*}
\end{proof}
We now define $\overline{\Phi}(x,y,z):=\inf\limits_{u\in U}\Phi(x,y,z,u),\ (x,y,z)\in\overline{\theta}\times\mathbb{R}\times\mathbb{R}^d.$ Note that $\overline{\Phi}(x,y,z)=\overline{\Phi}(x,0,z)-\lambda y$ and that $(x,y,z)\rightarrow\overline{\Phi}(x,y,z)$ is Lipschitz. We consider the following ODE
\begin{equation}\label{r97}
\left\{
\begin{array}{ll}
  dY_s^0=-\overline{\Phi}(x,Y_s^0,0)ds,\ s\in[0,t],\\
  Y_t^0=0.
\end{array}
\right.
\end{equation}
\begin{remark}
As $\overline{\Phi}(x,y,z)=\overline{\Phi}(x,0,z)-\lambda y$, the unique solution of (\ref{r97}) is given by
\begin{equation*}
 Y_s^0=\int_s^te^{-\lambda(r-s)}dr\cdot\overline{\Phi}(x,0,0),\ s\in[0,t].
\end{equation*}
\end{remark}
\begin{lemma}\label{lem3}
 $Y_s^0=\essinf\limits_{u\in\mathcal{U}}Y_s^{2,u}, s\in[0,t]$, i.e., in particular, $Y_0^0=\inf\limits_{u\in\mathcal{U}}Y_0^{2,u}$.
\end{lemma}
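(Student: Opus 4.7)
The plan is to establish the two inequalities $Y_s^0\le \essinf_{u\in\mathcal U} Y_s^{2,u}$ and $\essinf_{u\in\mathcal U} Y_s^{2,u}\le Y_s^0$ separately, both via a BSDE comparison / Girsanov argument of the same flavour already used repeatedly in the paper.

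For the lower bound, I would first notice that since $\overline{\Phi}(x,y,z)$ is the infimum of the uniformly Lipschitz family $\Phi(x,\cdot,\cdot,u)$, it is itself Lipschitz in $(y,z)$ with the same constants. Moreover, the pair $(Y_s^0,0)$ solves the BSDE $dY_s=-\overline{\Phi}(x,Y_s,Z_s)ds+Z_sdW_s$, $Y_t=0$, because when $Z_s\equiv 0$ the driver reduces to $\overline{\Phi}(x,Y_s^0,0)$ which is exactly the ODE in \eqref{r97}; by the uniqueness of BSDE solutions, $(Y^0,0)$ is the BSDE solution. Since for every $u\in\mathcal U$ the driver satisfies $\overline{\Phi}(x,y,z)\le \Phi(x,y,z,u_s)$ pointwise and both BSDEs share the terminal condition $0$, the classical comparison theorem for Lipschitz BSDEs yields $Y_s^0\le Y_s^{2,u}$ a.s.\ for every $u\in\mathcal U$ and $s\in[0,t]$.

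For the upper bound I would fix $\varepsilon>0$ and use measurable selection (Kuratowski--Ryll-Nardzewski, as quoted in the paper via \cite{Aubin 1990}) to obtain a Borel map $\hat u^\varepsilon\colon[0,t]\to U$ (hence an admissible control, deterministic, so trivially $\mathbb F$-adapted) such that
\begin{equation*}
R_s:=\Phi(x,Y_s^0,0,\hat u_s^\varepsilon)-\overline{\Phi}(x,Y_s^0,0)\in[0,\varepsilon],\qquad s\in[0,t].
\end{equation*}
Denote $\delta Y_s:=Y_s^{2,\hat u^\varepsilon}-Y_s^0$, $\delta Z_s:=Z_s^{2,\hat u^\varepsilon}$. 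The standard linearization trick used in the uniqueness part of Proposition \ref{th:2.4} gives a process $\alpha$ with $|\alpha_s|\le K_z$ such that
\begin{equation*}
\Phi(x,Y_s^{2,\hat u^\varepsilon},Z_s^{2,\hat u^\varepsilon},\hat u_s^\varepsilon)-\Phi(x,Y_s^0,0,\hat u_s^\varepsilon)=-\lambda\,\delta Y_s+\alpha_s\,\delta Z_s,
\end{equation*}
using that $\Phi(x,y,z,u)=\Phi(x,0,z,u)-\lambda y$ together with the $K_z$-Lipschitz property of $\psi$ in $z$. Consequently $\delta Y$ satisfies the linear BSDE
\begin{equation*}
\delta Y_s=\int_s^t\bigl(-\lambda\,\delta Y_r+\alpha_r\,\delta Z_r+R_r\bigr)dr-\int_s^t\delta Z_r\,dW_r,\qquad \delta Y_t=0.
\end{equation*}
Applying Girsanov's theorem with $dP^\varepsilon/dP=\mathcal E(\int_0^\cdot\alpha_rdW_r)_t$, so that $W_r^\varepsilon:=W_r-\int_0^r\alpha_sds$ is a Brownian motion under $P^\varepsilon$ on $[0,t]$, and multiplying by the integrating factor $e^{\lambda s}$, I obtain $\delta Y_s=\mathbb E^\varepsilon\!\left[\int_s^t e^{-\lambda(r-s)}R_r\,dr\,\big|\,\mathcal F_s\right]\le \varepsilon t$. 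Hence $Y_s^{2,\hat u^\varepsilon}\le Y_s^0+\varepsilon t$ a.s., and letting $\varepsilon\downarrow 0$ along a countable sequence $\varepsilon=1/n$ gives $\essinf_{u\in\mathcal U}Y_s^{2,u}\le Y_s^0$.

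The only genuine obstacle I foresee is the Girsanov/linearization step: one has to make sure that the $z$-component generated by $\psi$ in the definition of $\Phi$ is handled correctly (that is, that the effective linearizer $\alpha$ stays bounded by $K_z$ and that the resulting change-of-measure argument reproduces the inequality $R_r\ge 0$ after conditioning). Once this is in place, the Lipschitz/boundedness hypotheses on $b,\sigma,\psi,\varphi$ and the compactness of $\overline{\theta}$ make the measurable selection and the comparison arguments routine, and combining the two bounds gives $Y_s^0=\essinf_{u\in\mathcal U}Y_s^{2,u}$, which at $s=0$ reduces to the deterministic identity $Y_0^0=\inf_{u\in\mathcal U}Y_0^{2,u}$.
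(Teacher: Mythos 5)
Your proof is correct, and the lower bound (comparison theorem applied to $(Y^0,0)$ viewed as the solution of the BSDE with driver $\overline{\Phi}$, which dominates from below every $\Phi(x,\cdot,\cdot,u_s)$) is exactly the paper's first step. For the upper bound, however, you take a genuinely different and more laborious route than the paper. The paper observes that $\Phi(x,y,z,u)=\Phi(x,0,z,u)-\lambda y$, so the minimization over $u$ at $z=0$ does not depend on $y$; by compactness of $U$ and continuity of $\Phi(x,0,0,\cdot)$ the infimum is \emph{attained} at some $u^*\in U$, and for the constant control $u_s\equiv u^*$ the pair $(Y^0,0)$ solves BSDE (\ref{r96}) exactly, whence $Y^{2,u^*}=Y^0$ by uniqueness — no $\varepsilon$, no Girsanov. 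Your $\varepsilon$-optimal selection plus linearization/Girsanov estimate also works (the linearizer is indeed bounded by $K_z$ since the $z$-dependence of $\Phi$ enters only through $\psi$, and $R_r\ge 0$ survives the change of measure), and it would remain valid if the infimum were not attained; but note that even within your scheme the measurable selection is superfluous: since $\Phi(x,y,0,u)-\overline{\Phi}(x,y,0)=\Phi(x,0,0,u)-\overline{\Phi}(x,0,0)$ is independent of $y$, and hence of $s$, a single fixed $u^\varepsilon\in U$ with $\Phi(x,0,0,u^\varepsilon)\le\overline{\Phi}(x,0,0)+\varepsilon$ suffices, and compactness then hands you an exact minimizer, collapsing the entire second half of your argument to the paper's one-line observation.
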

\begin{proof}
From the comparison theorem for BSDEs we obtain easily that $Y_s^0\leq Y_s^{2,u},\ s\in[0,t]$, for all $u\in\mathcal{U}$. On the other hand, as $U$ is compact and $\Phi(x,0,0,\cdot)$ continuous on $U$, there is $u^*\in U$ such that $\overline{\Phi}(x,0,0)=\Phi(x,0,0,u^*)$. Then, for $u=(u_s)_{s\geq0}\in\mathcal{U}$ defined by $u_s=u^*,\ s\geq0$, $(Y^0,Z^0)=(Y^0,0)$ solves the BSDE
\begin{equation*}
  dY_s^{2,u}=-\Phi(x,Y_s^{2,u},Z_s^{2,u},u_s)ds+Z_s^{2,u}dW_s,\ s\in[0,t],\ Y_t^{2,u}=0,
\end{equation*}
and from the uniqueness of its solution we get $Y_s^{2,u}=Y_s^0,\ s\in[0,t]$. The proof is complete.
\end{proof}
Now we are able to give the proof of Proposition \ref{th:3.3.1}.
\begin{proof}\textbf{(of Proposition \ref{th:3.3.1}.)} From Lemma \ref{lem:2.6} we know that $V_\lambda\in C(\overline{\theta})$. Let $x\in {\theta}$ and $\varphi\in C^3(\mathbb{R}^N)$ be such that $0=(V_\lambda-\varphi)(x)\geq V_\lambda-\varphi$ on $\overline{\theta}$. Then, for all $u\in\mathcal{U}$ and $t>0,$ the DPP and the monotonicity of $G_{0,t}^{\lambda,x,u}[\cdot]$ yield:
\begin{equation*}
  \varphi(x)=V_\lambda(x)=\inf\limits_{u\in\mathcal{U}}G_{0,t}^{\lambda,x,u}[V_\lambda(X_t^{x,u})]\leq\inf\limits_{u\in\mathcal{U}}G_{0,t}^{\lambda,x,u}[\varphi(X_t^{x,u})].
\end{equation*}
Thus, due to Lemma \ref{lem1},
\begin{equation*}
 \inf\limits_{u\in\mathcal{U}}Y_0^{1,u}= \inf\limits_{u\in\mathcal{U}}(G_{0,t}^{\lambda,x,u}[\varphi(X_t^{x,u})]-\varphi(x))\geq0,
\end{equation*}
and Lemma \ref{lem2} implies together with Lemma \ref{lem3},
\begin{equation*}
  Y_0^0=\inf\limits_{u\in\mathcal{U}}Y_0^{2,u}\geq-ct^{\frac{3}{2}},\ \mbox{i.e.},\ \int_0^te^{-\lambda r}dr\cdot\overline{\Phi}(x,0,0)\geq-ct^{\frac{3}{2}},\ t>0.
\end{equation*}
Dividing the above relation by $t$ and taking after the limit as $t\downarrow0$, we get
\begin{equation*}
  0\leq\overline{\Phi}(x,0,0)=\inf\limits_{u\in U}\{\langle D\varphi(x),b(x,u)\rangle+\frac{1}{2}tr(\sigma\sigma^*(x,u)D^2\varphi(x))+\psi(x,D\varphi(x)\sigma(x,u),u)\}-\lambda\varphi(x),
\end{equation*}
i.e., $\lambda V_\lambda(x)+H(x,D\varphi(x),D^2\varphi(x))\leq0$.
This proves that $V_\lambda$ is a subsolution on $\overline{\theta}$; the proof that $V_\lambda$ is a viscosity supersolution on $\overline{\theta}$ is similar, and thus, omitted here.

As $V_\lambda\in Lip_{\frac{M_0}{\lambda}}(\overline{\theta})$ is a constrained viscosity solution of (\ref{r8}), we have from Theorem \ref{th1} (Comparison principle) its uniqueness in $C(\overline{\theta})$. However, for the convenience of the reader let us give the following comparison result and its proof for Hamiltonians of the form (\ref{r20}).
\end{proof}

\indent We have the uniqueness of the viscosity solution from the following theorem. For this we recall that $\overline{\theta}$  is a compact subset of $\mathbb{R}^N$ and invariant with respect to the control system (\ref{r2}).
\begin{proposition}\label{th:3.2} Assume (\ref{r1}) holds. Let $H_1,\ H_2:\mathbb{R}^N\times\mathbb{R}^N\times\mathcal{S}^N\rightarrow\mathbb{R}$ be two Hamiltonians of the form (\ref{r20}) with $\psi=\psi_1$ and $\psi=\psi_2$, respectively, where $\psi_1$ and $\psi_2$ are assumed to satisfy (\ref{r48}). We suppose that $u\in USC(\overline{\theta})$ is a subsolution of
\begin{equation*}
 \lambda V(x)+H_1(x,D\varphi(x),D^2\varphi(x))=0,\ x\in\overline{\theta},
 \end{equation*}
and $v\in LSC(\overline{\theta})$
is a supersolution of
\begin{equation*}
 \lambda V(x)+H_2(x,D\varphi(x),D^2\varphi(x))=0,\ x\in\overline{\theta}.
 \end{equation*}
Then it holds
\begin{equation*}
  \lambda (u(x)- v(x))\leq\sup\limits_{u\in U,  x\in\bar{\theta} \hfill \atop \scriptstyle z\in\mathbb{R}^d}\{|\psi_1(x,z,u)-\psi_2(x,z,u)|\},\ \mbox{for any}\ x\in\overline{\theta}.
\end{equation*}
\end{proposition}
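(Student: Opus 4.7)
The plan is to run the standard doubling-of-variables argument of Crandall--Ishii--Lions, adapted to Hamiltonians of the form \eqref{r20} and with the two different driving coefficients $\psi_1$ and $\psi_2$ tracked through the estimates. Let $\delta := \sup_{u\in U,\ x\in\overline{\theta},\ z\in\mathbb{R}^d}|\psi_1(x,z,u)-\psi_2(x,z,u)|$. I want to prove $\lambda(u-v)\leq \delta$ on $\overline{\theta}$; equivalently, it suffices to bound $\lambda(u(\bar x)-v(\bar x))$ at a maximum point $\bar x$ of $u-v$ over the compact set $\overline{\theta}$.

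First I would introduce the penalized function $\Phi_\varepsilon(x,y):=u(x)-v(y)-\tfrac{1}{2\varepsilon}|x-y|^2$ on $\overline{\theta}\times\overline{\theta}$, let $(x_\varepsilon,y_\varepsilon)$ be a maximizer (which exists by the upper semicontinuity of $\Phi_\varepsilon$ and compactness of $\overline{\theta}^2$), and recall the standard facts: $\tfrac{1}{\varepsilon}|x_\varepsilon-y_\varepsilon|^2\to 0$ as $\varepsilon\downarrow 0$, and along a subsequence $x_\varepsilon,y_\varepsilon\to\bar x$ with $u(\bar x)-v(\bar x)=\max_{\overline{\theta}}(u-v)$. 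Then Ishii's lemma provides $A_\varepsilon,B_\varepsilon\in\mathcal{S}^N$ such that, with $p_\varepsilon:=(x_\varepsilon-y_\varepsilon)/\varepsilon$,
\[(p_\varepsilon,A_\varepsilon)\in\overline{J}^{2,+}u(x_\varepsilon),\qquad (p_\varepsilon,B_\varepsilon)\in\overline{J}^{2,-}v(y_\varepsilon),\]
and the matrix inequality
\[-\tfrac{3}{\varepsilon}\begin{pmatrix}I&0\\0&I\end{pmatrix}\leq \begin{pmatrix}A_\varepsilon&0\\0&-B_\varepsilon\end{pmatrix}\leq \tfrac{3}{\varepsilon}\begin{pmatrix}I&-I\\-I&I\end{pmatrix}\]
holds. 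Using the sub-/super-solution inequalities for $u$ and $v$ and subtracting gives
\[\lambda(u(x_\varepsilon)-v(y_\varepsilon))\leq H_2(y_\varepsilon,p_\varepsilon,B_\varepsilon)-H_1(x_\varepsilon,p_\varepsilon,A_\varepsilon).\]

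The key step is to estimate the right-hand side. Because $H_1$ and $H_2$ are both suprema over $u\in U$ of the same structure, I use $\sup-\sup\leq\sup(\cdot-\cdot)$ to get, for each $u\in U$, a term of the form
\[\bigl\langle -p_\varepsilon,b(y_\varepsilon,u)-b(x_\varepsilon,u)\bigr\rangle+\tfrac{1}{2}\operatorname{tr}\bigl(\sigma\sigma^\ast(x_\varepsilon,u)A_\varepsilon-\sigma\sigma^\ast(y_\varepsilon,u)B_\varepsilon\bigr)+\psi_1(x_\varepsilon,p_\varepsilon\sigma(x_\varepsilon,u),u)-\psi_2(y_\varepsilon,p_\varepsilon\sigma(y_\varepsilon,u),u).\]
For the drift term, Lipschitzness of $b$ yields a bound by $c|x_\varepsilon-y_\varepsilon|^2/\varepsilon\to 0$. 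For the trace term, applying the Ishii matrix inequality column by column to $\sigma(\cdot,u)$ produces the standard bound $\tfrac{3}{\varepsilon}|\sigma(x_\varepsilon,u)-\sigma(y_\varepsilon,u)|^2\leq \tfrac{3c^2}{\varepsilon}|x_\varepsilon-y_\varepsilon|^2\to 0$, uniformly in $u$. For the $\psi$-difference, I add and subtract $\psi_2(x_\varepsilon,p_\varepsilon\sigma(x_\varepsilon,u),u)$, which splits the difference into
\[\underbrace{\bigl[\psi_1-\psi_2\bigr](x_\varepsilon,p_\varepsilon\sigma(x_\varepsilon,u),u)}_{\leq\,\delta}+\underbrace{\psi_2(x_\varepsilon,p_\varepsilon\sigma(x_\varepsilon,u),u)-\psi_2(y_\varepsilon,p_\varepsilon\sigma(y_\varepsilon,u),u)}_{\leq\,K_x|x_\varepsilon-y_\varepsilon|+K_z c|p_\varepsilon||x_\varepsilon-y_\varepsilon|};\]
here the second bracket is $\leq K_x|x_\varepsilon-y_\varepsilon|+K_z c\,|x_\varepsilon-y_\varepsilon|^2/\varepsilon\to 0$ uniformly in $u\in U$.

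Combining all three bounds and passing to the supremum over $u\in U$ yields $H_2(y_\varepsilon,p_\varepsilon,B_\varepsilon)-H_1(x_\varepsilon,p_\varepsilon,A_\varepsilon)\leq \delta+o(1)$, hence $\lambda(u(x_\varepsilon)-v(y_\varepsilon))\leq \delta+o(1)$. Sending $\varepsilon\downarrow 0$ and using the semicontinuity of $u,v$ and the choice of $\bar x$ gives $\lambda\max_{\overline{\theta}}(u-v)\leq \delta$, which proves the claim for every $x\in\overline{\theta}$. The main subtlety I anticipate is the uniformity in $u\in U$ of the vanishing terms; this is why the uniform Lipschitz bounds of $b,\sigma$ on $\mathbb{R}^N\times U$ and the structural assumption \eqref{r48} on $\psi_i$ are essential, and why taking $\sup_u$ only after producing the $o(1)$-control is the cleanest way to proceed.
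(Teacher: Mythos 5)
Your proposal is correct and follows essentially the same route as the paper's proof: the same doubling-of-variables with penalization $\tfrac{1}{2\varepsilon}|x-y|^2$, the same appeal to Ishii's lemma (Theorem 3.2 of Crandall--Ishii--Lions) for the matrices $A_\varepsilon,B_\varepsilon$, the same $\sup-\sup\leq\sup(\cdot-\cdot)$ decomposition into drift, trace and $\psi$-terms controlled by the Lipschitz bounds, and the same passage $\varepsilon\downarrow 0$ using $|x_\varepsilon-y_\varepsilon|^2/\varepsilon\to 0$. No gaps.
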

\begin{proof}
Let $u\in USC(\overline{\theta})$ be a subsolution and $v\in LSC(\overline{\theta})$ a supersolution. For $\varepsilon>0$ arbitrarily chosen, we define $\Phi_\varepsilon(x,x'):=u(x)-v(x')-\frac{1}{2\varepsilon}|x-x'|^2$, $(x,x')\in\overline{\theta}\times\overline{\theta}$. Let $(x_\varepsilon,x'_\varepsilon)\in\overline{\theta}\times\overline{\theta}$ denote a maximum point of the USC-function $\Phi_\varepsilon $ on the compact set $\overline{\theta}\times\overline{\theta}$. We set $\varphi_\varepsilon(x,x')=\frac{1}{2\varepsilon}|x-x'|^2$. Then
$u(x)-\varphi_\varepsilon(x,x'_\varepsilon)$ attains a maximum at $x=x_\varepsilon$ and $v(x')+\varphi_\varepsilon(x_\varepsilon,x')$ attains a minimum at $x'=x'_\varepsilon$.

From Theorem 3.2 in \cite{ISHII 1992} we have the existence of two matrices $A, B\in\mathcal{S}^N$ with
\begin{equation*}
(\frac{x_\varepsilon-x'_\varepsilon}{\varepsilon},A)\in \overline{J}^{2,+}u(x_\varepsilon),\quad
(\frac{x_\varepsilon-x'_\varepsilon}{\varepsilon},B)\in \overline{J}^{2,-}v(x'_\varepsilon),
\end{equation*}
such that
\begin{gather}\label{r98}
\begin{pmatrix}
A&0\\
0&-B
\end{pmatrix}
\leq A_0+\varepsilon A_0^2,\
A_0=D^2\varphi_\varepsilon(x,x')=\frac{1}{\varepsilon}
\begin{pmatrix}
I&-I\\
-I&I
\end{pmatrix}.
\end{gather}
We notice that $A_0+\varepsilon A_0^2=\frac{3}{\varepsilon}
\begin{pmatrix}
I&-I\\
-I&I
\end{pmatrix}.$
Then, as $u\in USC(\overline{\theta})$ is a subsolution on $\overline{\theta}$ and $v\in LSC(\overline{\theta})$ a supersolution on $\overline{\theta}$,
\begin{equation}\label{t5}
  \lambda u(x_\varepsilon)+H_1(x_\varepsilon,\frac{x_\varepsilon-x'_\varepsilon}{\varepsilon},A)\leq0,\ \
  \lambda v(x'_\varepsilon)+H_2(x'_\varepsilon,\frac{x_\varepsilon-x'_\varepsilon}{\varepsilon},B)\geq0.
\end{equation}
We set $\beta:=\sup\limits_{x\in\bar{\theta}}(u(x)-v(x))$. As $\overline{\theta}$ is compact, and $u-v$ upper semicontinuous on $\overline{\theta}$, there exists $\overline{x}\in\overline{\theta}$\ such that $u(\overline{x})-v(\overline{x})=\beta$. Then $\Phi_\varepsilon(x_\varepsilon,x'_\varepsilon)\geq u(\overline{x})-v(\overline{x})=\beta.$\\
Obviously, since
\begin{equation*}
  \frac{|x_\varepsilon-x'_\varepsilon|^2}{2\varepsilon}=u(x_\varepsilon)-v(x'_\varepsilon)-\Phi_\varepsilon(x_\varepsilon,x'_\varepsilon)\leq u(x_\varepsilon)-v(x'_\varepsilon)-\beta\leq c,\ \varepsilon>0,
\end{equation*}
we have that $|x_\varepsilon-x'_\varepsilon|^2\leq c\varepsilon$, and by letting $\varepsilon\to 0$ we get $\lim\limits_{\varepsilon\downarrow0}|x_\varepsilon-x'_\varepsilon|=0.$

As $\overline{\theta}$ is compact, there exists a subsequence of $(x_\varepsilon,x'_\varepsilon)\in\overline{\theta}, \varepsilon>0$, again denoted by $(x_\varepsilon,x'_\varepsilon)$, and some $\widehat{x}\in\overline{\theta}$ such that
$x_\varepsilon\rightarrow\widehat{x},\ x'_\varepsilon\rightarrow\widehat{x}$ as $\varepsilon\downarrow0$.
Consequently, $$0\leq\varlimsup\limits_{\varepsilon\downarrow0}\frac{|x_\varepsilon-x'_\varepsilon|^2}{2\varepsilon}\leq u(\widehat{x})-v(\widehat{x})-\beta\leq0.$$
It follows that
\begin{equation*}
\left\{
\begin{array}{lll}
u(\widehat{x})-v(\widehat{x})=\beta=\max\limits_{x\in\overline{\theta}}(u(x)-v(x)),\\
\lim\limits_{\varepsilon\downarrow0}\frac{|x_\varepsilon-x'_\varepsilon|^2}{2\varepsilon}=0.
\end{array}
\right.
\end{equation*}
From (\ref{t5}) we have
\begin{equation*}
\begin{split}
  0\geq\lambda u(x_\varepsilon)+\sup\limits_{u\in U}\{-b(x_\varepsilon,u)\frac{x_\varepsilon-x'_\varepsilon}{\varepsilon}-\frac{1}{2}tr(\sigma\sigma^*(x_\varepsilon,u)A)-\psi_1(x_\varepsilon,\frac{x_\varepsilon-x'_\varepsilon}{\varepsilon}\sigma(x_\varepsilon,u),u)\},\\
  0\leq\lambda v(x'_\varepsilon)+\sup\limits_{u\in U}\{-b(x'_\varepsilon,u)\frac{x_\varepsilon-x'_\varepsilon}{\varepsilon}-\frac{1}{2}tr(\sigma\sigma^*(x'_\varepsilon,u)B)-\psi_2(x'_\varepsilon,\frac{x_\varepsilon-x'_\varepsilon}{\varepsilon}\sigma(x'_\varepsilon,u),u)\}.
  \end{split}
\end{equation*}
Then, combined with (\ref{r98}), we obtain by using the Lipschitz assumption on $b, \sigma, \psi_1$ and $\psi_2$,
\begin{equation*}
\begin{array}{lll}
  &\displaystyle\lambda (u(x_\varepsilon)-v(x'_\varepsilon))
  \leq\sup\limits_{u\in U}\{(b(x_\varepsilon,u)-b(x'_\varepsilon,u))\frac{x_\varepsilon-x'_\varepsilon}{\varepsilon}+\frac{1}{2}tr\big(\sigma\sigma^*(x_\varepsilon,u)A-\sigma\sigma^*(x'_\varepsilon,u)B\big)\\
  &\displaystyle\quad\quad\quad\quad\quad\quad\quad \quad\quad\quad\quad +(\psi_1(x_\varepsilon,\frac{x_\varepsilon-x'_\varepsilon}{\varepsilon}\sigma(x_\varepsilon,u),u)-\psi_2(x'_\varepsilon,\frac{x_\varepsilon-x'_\varepsilon}{\varepsilon}\sigma(x'_\varepsilon,u),u))\}\\
  &\displaystyle\leq c\big(\frac{|x_\varepsilon-x'_\varepsilon|^2}{\varepsilon}+\sup\limits_{u\in U}\{|\psi_1(x_\varepsilon,\frac{x_\varepsilon-x'_\varepsilon}{\varepsilon}\sigma(x_\varepsilon,u),u)-\psi_2(x'_\varepsilon,\frac{x_\varepsilon-x'_\varepsilon}{\varepsilon}\sigma(x'_\varepsilon,u),u)|\}\big)\\
  &\displaystyle\leq c(\frac{|x_\varepsilon-x'_\varepsilon|^2}{\varepsilon}+|x_\varepsilon-x'_\varepsilon|)+\sup\limits_{x\in\overline{\theta}, p\in\mathbb{R}^N, \hfill \atop \scriptstyle u\in U}\{|\psi_1(x,p\sigma(x,u),u)-\psi_2(x,p\sigma(x,u),u)|\}.
 \end{array}
\end{equation*}
Finally, letting $\varepsilon\downarrow0$, this yields
\begin{equation*}
  \lambda\max\limits_{x\in\overline{\theta}}(u(x)-v(x))=\lambda (u(\widehat{x})-v(\widehat{x}))\leq \sup\limits_{x\in\overline{\theta}, p\in\mathbb{R}^N, \hfill \atop \scriptstyle u\in U}\{|\psi_1(x,p\sigma(x,u),u)-\psi_2(x,p\sigma(x,u),u)|\}.
\end{equation*}
\end{proof}

\begin{theorem}\label{th:4.1}
We suppose that the assumptions (\ref{r1}), (\ref{r48}) and (\ref{r49}) hold. Moreover, we suppose:
\begin{equation*}\label{r100}
\begin{array}{lll}
\mbox{There\ is\ a\ concave\ increasing\ function}\ \rho:\mathbb{R}_+\to\mathbb{R}_+ \ \mbox{with}\ \rho(0+)=0\ \mbox{such\ that,\ for\ all}\ (x,z)\\ \tag{H6}
  \in\mathbb{R}^N\times\mathbb{R}^d,\ u,\ u'\in U,\  \mid \psi(x,z,u)-\psi(x,z,u')\mid\leq(1+|z|)\rho(d(u,u'))
\end{array}
\end{equation*}
(Recall that $d$\  is the metric we consider on the control state space $U$).
Then, along a suitable subsequence $0<\lambda_n\downarrow 0$, there exists the uniform limit $\widetilde{w}(x)=\lim\limits_{\lambda\to0^+}\lambda V_\lambda(x)$ (recall that $V_\lambda(x)$ is defined by (\ref{r94})) and it is a viscosity solution of the equation
$$h(x,D\widetilde{w}(x),D^2\widetilde{w}(x))=0,\ x\in \overline{\theta},$$
in the sense of Definition \ref{def3.1}, where $h(x,p,A)=\max\limits_{u\in U}\{\langle -p,b(x,u)\rangle-\frac{1}{2}tr(\sigma\sigma^*(x,u)A)-\widetilde{\psi}(p\sigma(x,u),u)\}$. The function $\widetilde{\psi}$ is described below in the proof.
\end{theorem}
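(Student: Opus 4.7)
The plan is to pass to the viscosity-solution limit in the HJB equation satisfied by $V_\lambda$, after rescaling to work with $w_\lambda := \lambda V_\lambda$. By Lemma \ref{lem:2.6}, the family $\{w_\lambda\}_{\lambda>0}$ is equicontinuous and uniformly bounded on $\overline{\theta}$, so the Arzel\`a--Ascoli theorem yields a subsequence $\lambda_n\downarrow 0$ along which $w_{\lambda_n}\to\widetilde{w}$ uniformly on $\overline{\theta}$, with $\widetilde{w}\in\mathrm{Lip}_{M_0}(\overline{\theta})$. By Proposition \ref{th:3.3.1}, $V_\lambda$ is a viscosity solution of (\ref{r8}) on $\overline{\theta}$; writing $V_\lambda=w_\lambda/\lambda$ and multiplying the viscosity inequality by $\lambda$, this is equivalent to saying that whenever $\varphi\in C^2(\mathbb{R}^N)$ and $w_\lambda-\varphi$ attains a local max (resp.\ min) at $x_0\in\overline{\theta}$,
$$\lambda w_\lambda(x_0) + \sup_{u\in U}\Bigl\{-\langle D\varphi(x_0),b(x_0,u)\rangle - \tfrac{1}{2}\mathrm{tr}(\sigma\sigma^*(x_0,u)D^2\varphi(x_0)) - \lambda\psi\bigl(x_0,D\varphi(x_0)\sigma(x_0,u)/\lambda,u\bigr)\Bigr\} \leq 0 \ (\text{resp.\ } \geq 0).$$

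The second step is to extract the limit of the nonlinear term, which will be $\widetilde{\psi}$. Define the family $\Psi_\lambda(x,\xi,u):=\lambda\psi(x,\xi/\lambda,u)$ on $\overline{\theta}\times\mathbb{R}^d\times U$. From (\ref{r48}) one gets $|\Psi_\lambda(x,0,u)|\leq\lambda M$ and
$$|\Psi_\lambda(x,\xi,u)-\Psi_\lambda(x',\xi',u)|\leq\lambda K_x|x-x'|+K_z|\xi-\xi'|,$$
and from (\ref{r100}) one gets $|\Psi_\lambda(x,\xi,u)-\Psi_\lambda(x,\xi,u')|\leq(\lambda+|\xi|)\rho(d(u,u'))$. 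For $\lambda\leq 1$ the family is therefore equicontinuous and uniformly bounded on each compact $\overline{\theta}\times\overline{B_R(0)}\times U$, so a diagonal Arzel\`a--Ascoli extraction refining the subsequence $(\lambda_n)$ above produces $\Psi_{\lambda_n}\to\widetilde{\Psi}$ uniformly on every such compact. Passing $\lambda_n\to 0$ in the Lipschitz bound in $x$ forces $\widetilde{\Psi}$ to be independent of $x$, so we define $\widetilde{\psi}(\xi,u):=\widetilde{\Psi}(\cdot,\xi,u)$; it is continuous, $K_z$-Lipschitz in $\xi$, and positively $1$-homogeneous in $\xi$.

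For the final step, given $x_0\in\theta$ (resp.\ $x_0\in\overline{\theta}$) and $\varphi\in C^2(\mathbb{R}^N)$ with $\widetilde{w}-\varphi$ attaining a strict local max (resp.\ min) at $x_0$, the uniform convergence $w_{\lambda_n}\to\widetilde{w}$ supplies local extremum points $x_n\to x_0$ of $w_{\lambda_n}-\varphi$. Plugging $x_n$ into the displayed viscosity inequality and letting $n\to\infty$: $\lambda_n w_{\lambda_n}(x_n)\to 0$ by boundedness of $w_{\lambda_n}$; the drift and diffusion terms converge uniformly in $u$ by continuity of $b,\sigma$; and $\lambda_n\psi(x_n,D\varphi(x_n)\sigma(x_n,u)/\lambda_n,u)\to\widetilde{\psi}(D\varphi(x_0)\sigma(x_0,u),u)$ uniformly in $u\in U$, by the uniform convergence of $\Psi_{\lambda_n}$ together with the joint continuity of $(x,u)\mapsto D\varphi(x)\sigma(x,u)$. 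Since $U$ is compact and the three limits are uniform in $u$, the supremum passes to the limit and we obtain $h(x_0,D\varphi(x_0),D^2\varphi(x_0))\leq 0$ (resp.\ $\geq 0$), confirming that $\widetilde{w}$ is a constrained viscosity solution of $h(x,D\widetilde{w},D^2\widetilde{w})=0$ in the sense of Definition \ref{def3.1}.

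The main obstacle is the second step: the $x$-dependence of $\Psi_\lambda$ must vanish in the limit, and this is delivered precisely by the factor $\lambda$ in the $x$-Lipschitz bound, which collapses to $0$ as $\lambda\downarrow 0$. Assumption (\ref{r100}) is equally indispensable for equicontinuity in $u\in U$, without which one could not interchange the supremum over $U$ with the limit. Finally, in the absence of any structural hypothesis such as convexity of $\psi(x,\cdot,u)$, one cannot expect the full family $\Psi_\lambda$ to converge, which accounts for the extraction along a subsequence in the statement.
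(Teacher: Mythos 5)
Your proof follows essentially the same route as the paper's: Arzel\`a--Ascoli applied to $w_\lambda=\lambda V_\lambda$ (via Lemma \ref{lem:2.6}), a second Arzel\`a--Ascoli extraction applied to $\Psi_\lambda(x,\xi,u)=\lambda\psi(x,\xi/\lambda,u)$ using (H2) and (H6) to produce the $x$-independent limit $\widetilde{\psi}$, and then stability of viscosity solutions under uniform convergence of both the solutions and the Hamiltonians (which you spell out with test functions where the paper simply invokes the stability result). One minor caveat: your parenthetical claim that $\widetilde{\psi}$ is positively $1$-homogeneous in $\xi$ is not justified for a merely subsequential limit, since the scaling identity $\Psi_\lambda(x,t\xi,u)=t\,\Psi_{\lambda/t}(x,\xi,u)$ involves the sequence $\lambda_n/t$, along which convergence has not been established; however, this property is never used in your argument, so the proof stands.
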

\begin{proof}
Due to Proposition \ref{th:3.3.1} $V_\lambda(x)$ defined by (\ref{r94}) is a viscosity solution of
$$\lambda V(x)+H(x,DV(x),D^2V(x))=0\ \mbox{on}\ \overline{\theta}$$
(i.e., unlike a constrained viscosity solution $V_\lambda$ is a viscosity super-but also subsolution on $\overline{\theta}$),
and $\lambda V_\lambda\in\mbox{Lip}_{M_0}(\overline{\theta})$, for $M_0\geq \max\{\overline{c}_0,M\}$ (see Lemma \ref{lem:2.6}) and due to Proposition \ref{th:3.2} this viscosity solution is unique. We define $w_\lambda(x):=\lambda V_\lambda(x), x\in\overline{\theta}$. Then $w_\lambda$ is the unique viscosity solution of
\begin{equation}\label{226}\lambda w_\lambda(x)+H_\lambda(x,Dw_\lambda(x),D^2w_\lambda(x))=0\ \mbox{on}\  \overline{\theta},\end{equation} where
$$H_\lambda(x,p,A):=\lambda H(x,\frac{1}{\lambda} p,\frac{1}{\lambda} \lambda A)=\max\limits_{u\in U}\{\langle -p,b(x,u)\rangle-\frac{1}{2}tr(\sigma\sigma^*(x,u)A)-\lambda \psi(x,\frac{1}{\lambda}p\sigma(x,u),u)\}.$$
Due to (\ref{r48}) and (\ref{r100}) we have, for all $\lambda\in(0,1],\ (x,z,u),\ (x',z',u')\in\overline{\theta}\times\mathbb{R}^d\times U$,
\begin{equation*}
\begin{split}
&{\rm i)}\ | \lambda\psi(x,\frac{1}{\lambda}z,u)|\leq \lambda M+K_z|z|;\\
&{\rm ii)}\ | \lambda\psi(x,\frac{1}{\lambda}z,u)-\lambda\psi(x',\frac{1}{\lambda}z',u')|\leq \lambda K_x|x-x'|+K_z|z-z'|+(\lambda+|z|)\rho(d(u,u')),
\end{split}
\end{equation*}
i.e., combined with Lemma \ref{lem:2.6}, where we have shown that
\begin{equation*}
  |w_\lambda(x)|\leq M,\ x\in\overline{\theta};\ \ |w_\lambda(x)-w_\lambda(x')|\leq\overline{c}_0|x-x'|,\ x,\ x'\in\overline{\theta},\ \lambda>0,
\end{equation*}
we can apply the Arzel\'{a}-Ascoli Theorem to conclude that, for some sequence $\lambda_n\downarrow0$ (as $n\to\infty$), there are functions $\widetilde{w}:\overline{\theta}\to\mathbb{R},\ \  \widetilde{\psi}: \overline{\theta}\times\mathbb{R}^d\times U\to\mathbb{R}$ such that, for some $\widetilde{w}\in C(\overline{\theta})$, $w_{\lambda_n}\to\widetilde{w}$ ($n\to\infty$) uniformly on $\overline{\theta}$, and $\lambda_n\psi(x,\frac{1}{\lambda_n}z,u)\to\widetilde{\psi}(x,z,u)$ ($n\to\infty$), uniformly on compacts in $\overline{\theta}\times\mathbb{R}^d\times U$. Obviously,
\begin{equation*}
\begin{split}
&|\widetilde{w}(x)|\leq M,\ \ |\widetilde{w}(x)-\widetilde{w}(x')|\leq\overline{c}_0|x-x'|,\ x,\ x'\in\overline{\theta},\ \ \mbox{and}\\
&|\widetilde{\psi}(x,z,u)|\leq K_z|z|,\ \  |\widetilde{\psi}(x,z,u)-\widetilde{\psi}(x',z',u')|\leq K_z|z-z'|+|z|\rho(d(u,u')),
\end{split}
\end{equation*}
i.e., $\widetilde{\psi}(x,z,u)=\widetilde{\psi}(z,u),\ (x,z,u)\in\overline{\theta}\times\mathbb{R}^d\times U$, is independent of $x\in\overline{\theta}$.

Then, putting
\begin{equation*}
h(x,p,A):=\max\limits_{u\in U}\{\langle-p,b(x,u)\rangle-\frac{1}{2}tr(\sigma\sigma^*(x,u)A)-\widetilde{\psi}(p\sigma(x,u),u)\},
\end{equation*}
it follows that also $H_{\lambda_n}\to h\ (n\to\infty)$ uniformly on compacts. Finally, from (\ref{226}) and the stability result for viscosity solutions we see that $\widetilde{w}$ is a viscosity solution of the equation
\begin{equation*}
  h(x,D\widetilde{w}(x),D^2\widetilde{w}(x))=0,\ x\in\overline{\theta}.
\end{equation*}
\end{proof}
\begin{remark}
In Buckdahn, Li, Quincampoix \cite{Li} it is shown that the sequence $(w_\lambda)_{\lambda>0}$, as $\lambda\downarrow0$, can have at most only one accumulation point in the space $C(\overline{\theta})$ endowed with the supremum norm. As $\widetilde{w}$ is an accumulation point of $(w_\lambda)_{\lambda>0}$ and as due to Lemma \ref{lem:2.6} every subsequence of $w_\lambda$, $\lambda\downarrow0$, has a converging subsubsequence (Arzel\'{a}-Ascoli Theorem), it follows that $w_\lambda\to\widetilde{w} (\lambda\downarrow0)$, uniformly on $\overline{\theta}$. In particular, if we also suppose (\ref{r15}), we have $\widetilde{w}=w_0$.
\end{remark}
\begin{theorem}\label{th:4.2} We suppose that the assumptions (\ref{r1}), (\ref{r49}) and (\ref{r15}) hold true.
Now we consider the case: $\psi(x,z,u)=\psi_1(x,u)+g(z)$, where $\psi_1:\overline{\theta}\times U\rightarrow \mathbb{R}$ is bounded (by $M$), uniformly continuous and satisfies
\begin{equation*}
  |\psi_1(x,u)-\psi_1(x',u)|\leq K_x|x-x'|,\ \mbox{for any}\ x,\ x'\in\overline{\theta},\ u\in U,
\end{equation*}
while $g:\mathbb{R}^d\rightarrow\mathbb{R}$ is supposed to be Lipschitz (with Lipschitz constant $K_z$), positive homogeneous, concave and satisfies $g(0)=0$.
For $\eta\in L^2(\mathcal{F}_t)$, we consider the following BSDE
\begin{equation}\label{e1}
  Y_s^\eta=\eta+\int_s^tg(Z_r^\eta)dr-\int_s^tZ_r^\eta dW_r,\ s\in[0,t],
\end{equation}
and define the nonlinear expectation $\varepsilon^g[\eta]:=Y_0^\eta$. Then, there exists the uniform limit ${w}_0(x)=\lim\limits_{\lambda\to0^+}\lambda V_\lambda(x)$ (recall $V_\lambda(x)$ is defined by (\ref{r94})), and
\begin{equation*}
  w_0(x)=\inf\limits_{t\geq0, u\in\mathcal{U}}\varepsilon^g[\min\limits_{v\in U}\psi(X_t^{x,u},0,v)],\ \ \mbox{for any}\ x\in\overline{\theta}.
\end{equation*}
\end{theorem}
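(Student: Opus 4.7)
Since $\psi(x,z,u) = \psi_1(x,u) + g(z)$, the modulus-of-continuity hypothesis (H6) holds trivially with $\rho$ the modulus of uniform continuity of $\psi_1$ on $\overline\theta\times U$: all $u$-dependence of $\psi$ is through $\psi_1$. Theorem \ref{th:4.1} and the remark following it (using (H5)) therefore yield a uniform limit $w_0 := \lim_{\lambda\downarrow 0}\lambda V_\lambda$ on $\overline\theta$, with $w_0\in\mbox{Lip}_{\overline c_0}(\overline\theta)$. My first step is to derive a ``limit DPP'' for $w_0$. Starting from Proposition \ref{dpp} and performing the change of variables $(\tilde Y,\tilde Z) := \lambda(Y,Z)$, the positive homogeneity of $g$ (giving $\lambda g(\tilde Z/\lambda) = g(\tilde Z)$) converts the BSDE defining $G_{0,t}^{\lambda,x,u}[V_\lambda(X_t^{x,u})]$ into a BSDE on $[0,t]$ with terminal $w_\lambda(X_t^{x,u})$ and driver $\lambda\psi_1(X_r,u_r) + g(\tilde Z_r) - \lambda\tilde Y_r$. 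Uniform convergence $w_\lambda\to w_0$ on $\overline\theta$ and standard BSDE stability (uniform in $u$) permit passing $\lambda\to 0^+$: the limit BSDE has driver $g(\tilde Z_r)$, terminal $w_0(X_t^{x,u})$, and value at time $0$ equal to $\varepsilon^g[w_0(X_t^{x,u})]$. Commuting the infimum over $u$ with the limit yields
\begin{equation*}
w_0(x) = \inf_{u\in\mathcal U}\varepsilon^g\bigl[w_0(X_t^{x,u})\bigr],\qquad t\geq 0,\ x\in\overline\theta.
\end{equation*}

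Let $\phi(y) := \min_{v\in U}\psi(y,0,v) = \min_v\psi_1(y,v)$; this is Lipschitz on $\overline\theta$, and by measurable selection there is a Borel $v^*:\overline\theta\to U$ with $\psi_1(y,v^*(y))=\phi(y)$. Write $I(x):= \inf_{t\geq 0,\,u\in\mathcal U}\varepsilon^g[\phi(X_t^{x,u})]$, so that the claim is $w_0=I$. For the upper bound $w_0\leq I$ I first prove the pointwise bound $w_0\leq\phi$ on $\overline\theta$. The suboptimality $V_\lambda(y)\leq \bar Y_0^{\lambda,y,v^*(y)}$, together with the robust/Girsanov representation $\bar Y_0^{\lambda,y,v^*(y)} = \inf_\theta E^\theta\bigl[\int_0^\infty e^{-\lambda s}\psi_1(X_s^{y,v^*(y)},v^*(y))\,ds\bigr]$ (available since concavity plus positive homogeneity of $g$ yield $g(z) = \inf_{\theta\in\Gamma}\theta\cdot z$ for a compact $\Gamma\subset B(0,K_z)$), combined with the Lipschitz property of $\psi_1(\cdot,u)$, the invariance of $\overline\theta$, and the stochastic nonexpansivity of Proposition \ref{p:2.1}, yields $\lambda\bar Y_0^{\lambda,y,v^*(y)}\leq\phi(y) + o_\lambda(1)$ uniformly in $y\in\overline\theta$, hence $w_0\leq\phi$. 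The limit DPP and monotonicity of $\varepsilon^g$ then give, for every $t\geq 0,\,u\in\mathcal U$, $w_0(x) \leq \varepsilon^g[w_0(X_t^{x,u})] \leq \varepsilon^g[\phi(X_t^{x,u})]$; infimizing over $(t,u)$ completes this direction.

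For the reverse inequality $w_0\geq I$, fix $\epsilon>0$; by uniform convergence, pick $\lambda$ small and an $\epsilon$-optimal control $u^\lambda\in\mathcal U$ for $V_\lambda(x)$, so that $\lambda\bar Y_0^{\lambda,x,u^\lambda}\leq w_0(x)+2\epsilon$. The same robust representation gives $\lambda\bar Y_0^{\lambda,x,u^\lambda} = \inf_\theta E^\theta\bigl[\lambda\int_0^\infty e^{-\lambda s}\psi_1(X_s^{x,u^\lambda},u_s^\lambda)\,ds\bigr]$, and the pointwise bound $\psi_1(\cdot,u)\geq\phi$ reduces the analysis to worst-case Abel means of $\phi$ along the optimal trajectory. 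A Tauberian-type localization of these Abel means in the $g$-expectation framework --- exploiting Proposition \ref{p:2.1} to commute the infimum over controls with the $\lambda\to 0$ limit --- produces $(t,u)$ with $\varepsilon^g[\phi(X_t^{x,u})]\leq w_0(x)+C\epsilon$, yielding $I(x)\leq w_0(x)$ by arbitrariness of $\epsilon$. \emph{The main obstacle} is precisely this Tauberian-type step, together with the closely related uniform estimate $\lambda\bar Y_0^{\lambda,y,v^*(y)}\leq\phi(y)+o_\lambda(1)$ from paragraph two: both rest on a delicate interplay between BSDE/Girsanov analysis and the stochastic nonexpansivity condition, which is needed to commute an infimum over admissible controls with the $\lambda\to 0$ limit in the genuinely nonlinear-expectation setting.
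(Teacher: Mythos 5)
Your first paragraph (the limit dynamic programming principle $w_0(x)=\inf_{u\in\mathcal U}\varepsilon^g[w_0(X_t^{x,u})]$, obtained from Proposition \ref{dpp} via the positive homogeneity of $g$ and BSDE stability) coincides with Step 1 of the paper's proof and is fine. The two remaining steps, however, each contain a genuine gap. For the upper bound you try to prove $w_0\leq\phi:=\min_v\psi(\cdot,0,v)$ by freezing the control at $v^*(y)$ and claiming $\lambda\overline Y_0^{\lambda,y,v^*(y)}\leq\phi(y)+o_\lambda(1)$. This claim is unjustified and false in general: as $\lambda\downarrow 0$ the Abel mean $\lambda\int_0^\infty e^{-\lambda s}\psi_1(X_s^{y,v^*(y)},v^*(y))\,ds$ is governed by the long-run behaviour of the trajectory, not by its initial point, and nothing prevents the dynamics from driving $X_s^{y,v^*(y)}$ into a region where $\psi_1(\cdot,v^*(y))$ exceeds $\phi(y)$; neither the Lipschitz property of $\psi_1$ nor the stochastic nonexpansivity condition (which compares two trajectories from different initial points, not a trajectory with its starting value) repairs this. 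The inequality $w_0\leq\phi$ is true, but the paper obtains it from the PDE side: $w_0$ is a viscosity subsolution of $w_0+\overline H(x,Dw_0,D^2w_0)\leq 0$ on $\theta$, and the radial monotonicity (\ref{r15}) gives $\overline H(x,p,A)\geq H(x,0,0)=\sup_u(-\psi(x,0,u))$ for every $(p,A)\in J^{2,+}w_0(x)$, whence $w_0(x)\leq\min_v\psi(x,0,v)$ wherever the superjet is nonempty, extended to all of $\overline\theta$ by a standard penalization. Your sketch lists (\ref{r15}) among the hypotheses but never uses it where it is actually needed.

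For the lower bound $w_0\geq I$ you openly leave the key step as an unproven ``Tauberian-type localization.'' No such theorem is needed, and this is precisely where the paper's argument is both simpler and complete: starting from
\begin{equation*}
\lambda V_\lambda(x)\geq\inf_{u\in\mathcal U}\varepsilon^g\Bigl[-Me^{-\lambda t}+\lambda\int_0^t e^{-\lambda r}\min_{v\in U}\psi_1(X_r^{x,u},v)\,dr\Bigr],
\end{equation*}
one replaces the finite integral by the infinite one at a cost $2cMe^{-\lambda t}$ and then uses the \emph{concavity} of $\varepsilon^g$ (Remark 4.3, extended from two-point convex combinations to the probability measure $\lambda e^{-\lambda r}dr$ on $[0,\infty)$) to pull the Abel average outside the nonlinear expectation:
\begin{equation*}
\varepsilon^g\Bigl[\lambda\int_0^\infty e^{-\lambda r}\min_{v}\psi_1(X_r^{x,u},v)\,dr\Bigr]\geq\lambda\int_0^\infty e^{-\lambda r}\varepsilon^g\bigl[\min_{v}\psi_1(X_r^{x,u},v)\bigr]dr\geq\inf_{t\geq0,\,u\in\mathcal U}\varepsilon^g\bigl[\min_{v}\psi_1(X_t^{x,u},v)\bigr].
\end{equation*}
Letting $t\uparrow\infty$ and then $\lambda\downarrow 0$ gives $w_0\geq I$ directly, with no commutation of the infimum over controls with the limit. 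So the two steps you identify as the main obstacles are exactly the ones your proposal does not close; the paper closes them with the radial monotonicity of $H$ and the Jensen inequality for the concave $g$-expectation, respectively.
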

\begin{remark} {\rm (i)}\ $\varepsilon^g[.]$ is called $g$-expectation, it was first introduced by Peng, see, e.g., \cite{peng}. Its definition is independent of $t$. Indeed, if $\eta\in L^2(\mathcal{F}_s),\ s\leq t$, then, in (\ref{e1}), $Z_r^\eta=0,\ r\in[s,t]$.

{\rm (ii)} We recall the properties of $\varepsilon^g[\cdot]$, in particular, its concavity under the above assumptions on $g$: Let $\lambda_1, \lambda_2\in(0,1), \mbox{such\ that}\ \lambda_1+\lambda_2=1, \eta_1, \eta_2\in L^2(\mathcal{F}_t)$, $\overline{Y}_s:=(\lambda_1Y_s^{\eta_1}+\lambda_2Y_s^{\eta_2})-Y_s^{\lambda_1\eta_1+\lambda_2\eta_2}, \overline{Z}_s:=(\lambda_1Z_s^{\eta_1}+\lambda_2Z_s^{\eta_2})-Z_s^{\lambda_1\eta_1+\lambda_2\eta_2}$, $s\in[0,t]$. As the function $g$ is Lipschitz and concave, we get
\begin{equation*}
  \begin{split}
  &(\overline{Y}_s)^+(\lambda_1g(Z_s^{\eta_1})+\lambda_2g(Z_s^{\eta_2})-g(Z_s^{\lambda_1\eta_1+\lambda_2\eta_2}))\\
  \leq&(\overline{Y}_s)^+(g(\lambda_1Z_s^{\eta_1}+\lambda_2Z_s^{\eta_2})-g(Z_s^{\lambda_1\eta_1+\lambda_2\eta_2}))\\
  \leq&L(\overline{Y}_s)^+|\overline{Z}_s|, \ s\in[0,t].
  \end{split}
\end{equation*}
Hence, $\displaystyle\mathbb{E}[((\overline{Y}_s)^+)^2]+\mathbb{E}[\int_s^t|\overline{Z}_r|^21_{\{\overline{Y}_r>0\}}dr]\leq 2L\mathbb{E}[\int_s^t(\overline{Y}_s)^+|\overline{Z}_r|dr],\ s\in[0,t],$ and a standard estimate and Gronwall's inequality give $(\overline{Y}_s)^+=0$, i.e., $\lambda_1Y_s^{\eta_1}+\lambda_2Y_s^{\eta_2}\leq Y_s^{\lambda_1\eta_1+\lambda_2\eta_2},\ s\in[0,t],\ \mathbb{P}$-a.s. Thus, for $s=0$, $\varepsilon^g[\lambda_1\eta_1+\lambda_2\eta_2]\geq \lambda_1\varepsilon^g[\eta_1]+\lambda_2\varepsilon^g[\eta_2]$.\end{remark}
\begin{proof}\textbf{(of Theorem \ref{th:4.2}.)}\\
\textbf{Step 1}. From Proposition \ref{dpp} (DPP) we have $V_\lambda(x)=\inf\limits_{u\in\mathcal{U}}G_{0,t}^{\lambda,x,u}[V_\lambda(X_t^{x,u})]$,
where $G_{0,t}^{\lambda,x,u}[\eta]=\widetilde{Y}_{0,t}^{\lambda,x,u,\eta}$, for $\eta\in L^2(\mathcal{F}_t)$, defined by the BSDE
\begin{equation*}
  \left\{
\begin{array}{ll}
d\widetilde{Y}_{s,t}^{\lambda,x,u,\eta}=-(\psi(X_s^{x,u},\widetilde{Z}_{s,t}^{\lambda,x,u,\eta},u_s)-\lambda\widetilde{Y}_{s,t}^{\lambda,x,u,\eta})ds+\widetilde{Z}_{s,t}^{\lambda,x,u,\eta}dW_s,\\
\widetilde{Y}_{t,t}^{\lambda,x,u,\eta}=\eta,\ \eta:=V_\lambda(X_t^{x,u}).
\end{array}
\right.
\end{equation*}
Combined with the positive homogeneity of $g$, we obtain, for $s\in[0,t]$,
\begin{equation*}
  d(\lambda e^{-\lambda s}\widetilde{Y}_{s,t}^{\lambda,x,u,\eta}+\lambda\int_0^se^{-\lambda r}\psi_1(X_r^{x,u},u_r)dr)=-g(e^{-\lambda s}\lambda\widetilde{Z}_{s,t}^{\lambda,x,u,\eta})ds+e^{-\lambda s}\lambda\widetilde{Z}_{s,t}^{\lambda,x,u,\eta}dW_s.
\end{equation*}
On the other hand,
\begin{equation*}
  \lambda e^{-\lambda t}\widetilde{Y}_{t,t}^{\lambda,x,u,\eta}+\lambda\int_0^te^{-\lambda r}\psi_1(X_r^{x,u},u_r)dr=e^{-\lambda t}\lambda V_\lambda(X_t^{x,u})+\lambda\int_0^te^{-\lambda r}\psi_1(X_r^{x,u},u_r)dr.
\end{equation*}
Thus, for $\eta=V_\lambda(X_t^{x,u})$,
\begin{equation*}
  \varepsilon^g[e^{-\lambda t}\lambda V_\lambda(X_t^{x,u})+\lambda\int_0^te^{-\lambda r}\psi_1(X_r^{x,u},u_r)dr]=\lambda\widetilde{Y}_{0,t}^{\lambda,x,u,\eta}=\lambda G_{0,t}^{\lambda,x,u}[V_\lambda(X_t^{x,u})].
\end{equation*}
Hence,
\begin{equation}\label{f1}
 \lambda V_\lambda(x)=\inf\limits_{u\in\mathcal{U}}\lambda G_{0,t}^{\lambda,x,u}[V_\lambda(X_t^{x,u})]= \inf\limits_{u\in\mathcal{U}}\varepsilon^g[e^{-\lambda t}\lambda V_\lambda(X_t^{x,u})+\lambda\int_0^te^{-\lambda r}\psi_1(X_r^{x,u},u_r)dr].
\end{equation}
Notice that (see, e.g., \cite{peng}, or use just classical estimates for BSDE)
\begin{equation}\label{f2}
  \begin{split}
  &|\varepsilon^g[e^{-\lambda t}\lambda V_\lambda(X_t^{x,u})+\lambda\int_0^te^{-\lambda r}\psi_1(X_r^{x,u},u_r)dr]-\varepsilon^g[w_0(X_t^{x,u})]|\\
   \leq&c\parallel e^{-\lambda t}\lambda V_\lambda(X_t^{x,u})+\lambda\int_0^te^{-\lambda r}\psi_1(X_r^{x,u},u_r)dr-w_0(X_t^{x,u})\parallel_{L^2(\Omega)}\\
   \leq&c((1-e^{-\lambda t})+\parallel\lambda V_\lambda-w_0\parallel_\infty+\lambda tM),\ \mbox{for any}\ \lambda,\ t\geq0,\ u\in\mathcal{U}.
  \end{split}
\end{equation}
Thus, combining (\ref{f1}) and (\ref{f2}), we have
\begin{equation*}
 \lambda V_\lambda(x)= \inf\limits_{u\in\mathcal{U}}\varepsilon^g[w_0(X_t^{x,u})]+R_t^{\lambda,x},\ \text{with}\ |R_t^{\lambda,x}|\leq c((1-e^{-\lambda t})+\parallel \lambda V_\lambda-w_0\parallel_\infty+\lambda tM).
\end{equation*}
Then, letting $\lambda$ tend to 0 we get
\begin{equation}\label{d1}
  w_0(x)=\inf\limits_{u\in\mathcal{U}}\varepsilon^g[w_0(X_t^{x,u})],\ \mbox{for any}\ t\geq0,\ x\in\overline{\theta}.
\end{equation}
\textbf{Step 2.}\ From (\ref{f1}), using the monotonicity of $\varepsilon^g$ (resulting from the BSDE comparison theorem) and recalling that $|\lambda V_\lambda(x)|\leq M, \ \mbox{for\ all}\ x\in\overline{\theta},\ \lambda\geq0$, we obtain
\begin{equation*}
 \lambda V_\lambda(x)\geq \inf\limits_{u\in\mathcal{U}}\varepsilon^g[-Me^{-\lambda t}+\lambda\int_0^te^{-\lambda r}\min\limits_{v\in U}\psi_1(X_r^{x,u},v)dr].
\end{equation*}
Similar to (\ref{f2}) we get
\begin{equation*}
  \begin{split}
  &\sup\limits_{u\in\mathcal{U}}|\varepsilon^g[-Me^{-\lambda t}+\lambda\int_0^te^{-\lambda r}\min\limits_{v\in U}\psi_1(X_r^{x,u},v)dr]-\varepsilon^g[\lambda\int_0^\infty e^{-\lambda r}\min\limits_{v\in U}\psi_1(X_r^{x,u},v)dr]|\\
  \leq& c(Me^{-\lambda t}+\lambda\int_t^\infty e^{-\lambda r}dr\cdot M)=2cMe^{-\lambda t}\xrightarrow[t\uparrow+\infty]{}0.
  \end{split}
\end{equation*}
Consequently, using the concavity of $\varepsilon^g[\cdot]$ (see Remark 4.3.) this yields
\begin{equation*}
  \begin{split}
  &\lambda V_\lambda(x)\geq \inf\limits_{u\in\mathcal{U}}\varepsilon^g[\lambda\int_0^\infty e^{-\lambda r}\cdot\min\limits_{v\in U}\psi_1(X_r^{x,u},v)dr]-2cMe^{-\lambda t}\\
  &\geq \inf\limits_{u\in\mathcal{U}}\lambda\int_0^\infty e^{-\lambda r}\varepsilon^g[\min\limits_{v\in U}\psi_1(X_r^{x,u},v)]dr-2cMe^{-\lambda t}\\
  &\geq \inf\limits_{t\geq 0, u\in\mathcal{U}}\varepsilon^g[\min\limits_{v\in U}\psi_1(X_t^{x,u},v)]-2cMe^{-\lambda t},\ t\geq 0,\ x\in\overline{\theta}.
  \end{split}
\end{equation*}
Taking the limit as $t\rightarrow +\infty$ we get immediately
\begin{equation}\label{1000}\lambda V_\lambda(x)\geq \inf\limits_{t\geq0, u\in\mathcal{U}}\varepsilon^g[\min\limits_{v\in U}\psi_1(X_t^{x,u},v)],\ \mbox{for any}\ x\in\overline{\theta}.\end{equation}
 Combining Propositions \ref{th:3.3.1} and \ref{th:3.2} (comparison result), (\ref{r15}) allows to use the proof of Theorem \ref{the:3.4} without using ($A_\theta)$ (($A_H$) and (H) are satisfied since Hamiltonian $H$ is of the form (\ref{r20})) that $\lambda V_\lambda\to w_0$ uniformly on $\overline{\theta}$ as $\lambda\downarrow0$, and $w_0$ is the maximal viscosity subsolution on $\overline{\theta}$ of
\begin{equation}\label{r4.13}
  w_0(x)+\overline{H}(x,Dw_0(x),D^2w_0(x))\leq0,\ x\in\theta.
 \end{equation}Hence, letting $\lambda\downarrow0$ in above inequality (\ref{1000}) yields
 $$w_0(x)\geq \inf\limits_{t\geq0, u\in\mathcal{U}}\varepsilon^g[\min\limits_{v\in U}\psi_1(X_t^{x,u},v)],\ \mbox{for any}\ x\in\overline{\theta}.$$
\textbf{Step 3.}\ Recall (\ref{r4.13}). Then, for all $x\in\theta,\ (p,A)\in J^{2,+}w_0(x)$ thanks to (\ref{r15}) (see also Lemma \ref{l:3.4}) we have
\begin{equation*}
  0\geq w_0(x)+\overline{H}(x,p,A)\geq w_0(x)+\overline{H}(x,0,0)=w_0(x)+\sup\limits_{u\in U}(-\psi(x,0,u)).
\end{equation*}
This shows that, if $J^{2,+}w_0(x)\neq\emptyset$, then
\begin{equation}\label{d2}
  w_0(x)\leq\min\limits_{v\in U}\psi(x,0,v).
\end{equation}
Let $x\in\theta$\ and $\varepsilon>0$, and define
\begin{equation*}
  \psi_\varepsilon(y):=w_0(y)-\frac{1}{2\varepsilon}|y-x|^2,\ y\in\overline{\theta}.
\end{equation*}
Let $y_\varepsilon\in\overline{\theta}$ be a maximum point of $\psi_\varepsilon$. As $\psi_\varepsilon(y_\varepsilon)\geq\psi_\varepsilon(x)=w_0(x)$,\  $\frac{1}{2\varepsilon}|y_\varepsilon-x|^2\leq w_0(y_\varepsilon)-w_0(x)\leq2M$, we get $y_\varepsilon\xrightarrow[\varepsilon\downarrow0]{}x$, i.e., for $\varepsilon>0$ small enough, $y_\varepsilon\in\theta$. On the other hand, we have $(p,A):=(\frac{y_\varepsilon-x}{\varepsilon}, \frac{1}{\varepsilon}I_{\mathbb{R}^N})\in J^{2,+}w_0(y_\varepsilon)$.

From (\ref{d2}) we have $w_0(y_\varepsilon)\leq\min\limits_{v\in U}\psi(y_\varepsilon,0,v)$, and taking $\varepsilon\downarrow0$ yields $w_0(x)\leq \min\limits_{v\in U}\psi(x,0,v),$ $ \mbox{for any}\ x\in\theta$, and by the continuity of both sides of the inequality in $x\in\overline{\theta}$ we have
\begin{equation*}
  w_0(x)\leq \min\limits_{v\in U}\psi(x,0,v),\ \mbox{for all}\ x\in\overline{\theta}.
\end{equation*}
Finally, it follows from (\ref{d1}) and the monotonicity of $\varepsilon^g[\cdot]$ that $$
  w_0(x)\leq \inf\limits_{u\in\mathcal{U}}\varepsilon^g[\min\limits_{v\in U}\psi(X_t^{x,u},0,v)],\ t\geq0,\ x\in\overline{\theta},$$
which means $w_0(x)\leq \inf\limits_{t\geq0, u\in\mathcal{U}}\varepsilon^g[\min\limits_{v\in U}\psi(X_t^{x,u},0,v)],\ x\in\overline{\theta}.$ Combined with Step 2 we get
\begin{equation*}
  w_0(x)=\inf\limits_{t\geq0, u\in\mathcal{U}}\varepsilon^g[\min\limits_{v\in U}\psi(X_t^{x,u},0,v)],\ \mbox{for any}\ x\in\overline{\theta}.
\end{equation*}\end{proof}
\begin{remark}
Let us consider the special case where $\psi$ is independent of $z$, i.e., $g(z)=0$. Then we get $w_0(x)=\inf\limits_{t\geq0, u\in\mathcal{U}}\mathbb{E}[\min\limits_{v\in U}\psi(X_t^{x,u},0,v)]$, for any $x\in\overline{\theta}$.
\end{remark}
Let us come back now to a general case of $\psi$.
\begin{theorem}\label{th:4.3} We suppose that the assumptions (\ref{r1}), (\ref{r48}), (\ref{r49}), (\ref{r15}), (\ref{r100}) and\textbf{ ($A_\theta$)} hold true. Moreover, let $H(x,p,A)$ be convex in $(p,A)\in\mathbb{R}^N\times\mathcal{S}^N$, for all $x\in\overline{\theta}$. Then, we have
\begin{equation*}
  \begin{aligned}
w_0(x)\leq &\inf\{G_{0,t}^{\widetilde{\psi},x,u}[\min\limits_{v\in U}\psi(X_t^{x,u},0,v)]\mid u\in\mathcal{U},\ t\geq0,\ \widetilde{\psi}\ \mbox{such that there exists}\ \lambda_n\downarrow0\ \mbox{with}\\
&\ \  \ \ \lambda_n\psi(x,\frac{1}{\lambda_n}z,u)\rightarrow\widetilde{\psi}(z,u)\},\ x\in\overline{\theta}.
\end{aligned}
\end{equation*}
\end{theorem}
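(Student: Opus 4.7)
The plan is to imitate the two-sided strategy of Theorem~\ref{th:4.2}: from the DPP and a rescaling, extract along a subsequence $\lambda_n\downarrow 0$ both the limit $w_0$ and the prescribed accumulation $\widetilde{\psi}$ by BSDE stability; then bound $w_0$ pointwise by $\min_{v\in U}\psi(\cdot,0,v)$ via the viscosity subsolution inequality satisfied by $w_0$; finally combine both through the BSDE comparison theorem, and take the infimum over $u$, $t$, and $\widetilde{\psi}$.

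Fix $u\in\mathcal{U}$, $t\geq 0$, and a subsequence $\lambda_n\downarrow 0$ along which $\lambda_n\psi(\cdot,\tfrac{1}{\lambda_n}\cdot,\cdot)\to\widetilde{\psi}(\cdot,\cdot)$ uniformly on compacts of $\overline{\theta}\times\mathbb{R}^d\times U$ (extracted as in the proof of Theorem~\ref{th:4.1}; the limit is automatically independent of $x$). Proposition~\ref{dpp} gives $\lambda_n V_{\lambda_n}(x)\leq\lambda_n G^{\lambda_n,x,u}_{0,t}[V_{\lambda_n}(X_t^{x,u})]$. Setting $Y^n_s:=\lambda_n\widetilde{Y}^{\lambda_n,x,u,V_{\lambda_n}(X_t^{x,u})}_{s,t}$ and $Z^n_s:=\lambda_n\widetilde{Z}^{\lambda_n,x,u,V_{\lambda_n}(X_t^{x,u})}_{s,t}$, the rescaled pair solves on $[0,t]$ the BSDE
$$Y^n_s=\lambda_n V_{\lambda_n}(X_t^{x,u})+\int_s^t\bigl[\lambda_n\psi(X_r^{x,u},\tfrac{1}{\lambda_n}Z^n_r,u_r)-\lambda_n Y^n_r\bigr]dr-\int_s^t Z^n_r\,dW_r,$$
with $\|Y^n\|_\infty\leq M$ (Proposition~\ref{th:2.4}) and a uniform $\mathcal{H}^2([0,t];\mathbb{R}^d)$-bound on $Z^n$ from standard a priori estimates applied to the rescaled driver (which is $K_z$-Lipschitz in $z$, $\lambda_n$-Lipschitz in $y$, and has $|f_n(r,0,0)|\leq\lambda_n M$). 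Since $\lambda_n V_{\lambda_n}\to w_0$ uniformly on $\overline{\theta}$ (Theorem~\ref{th:3.3}) and $\lambda_n Y^n\to 0$, BSDE stability yields $Y^n_0\to G^{\widetilde{\psi},x,u}_{0,t}[w_0(X_t^{x,u})]$, the latter defined by the limit BSDE with driver $\widetilde{\psi}(z,u_r)$. Passing to the limit in the DPP inequality produces
\begin{equation}\label{eq:planineq}
w_0(x)\leq G^{\widetilde{\psi},x,u}_{0,t}\bigl[w_0(X_t^{x,u})\bigr].
\end{equation}

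To upgrade (\ref{eq:planineq}) to the claimed bound, I would next show $w_0(y)\leq\min_{v\in U}\psi(y,0,v)$ on $\overline{\theta}$, reproducing Step~3 of the proof of Theorem~\ref{th:4.2}: the convexity of $H$ together with Lemma~\ref{l:3.4} strengthens the radial monotonicity (\ref{r15}) to $H(y,p,A)\geq H(y,0,0)$, so $\overline{H}(y,p,A)\geq\min\{M_0,H(y,0,0)\}$; Theorem~\ref{the:3.4} (applicable since $(A_\theta)$ holds and $H$ is of the form (\ref{r20})) makes $w_0$ a viscosity subsolution on $\theta$ of $w_0+\overline{H}(\cdot,Dw_0,D^2w_0)\leq 0$; a quadratic-penalization argument at a maximum point of $w_0-\tfrac{1}{2\varepsilon}|\cdot-y|^2$ then produces $(p_\varepsilon,A_\varepsilon)\in J^{2,+}w_0(y_\varepsilon)$ with $y_\varepsilon\to y$ and forces, in the limit, $w_0(y)\leq -H(y,0,0)=\min_{v\in U}\psi(y,0,v)$ on $\theta$; continuity extends this to $\overline{\theta}$. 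Plugging into (\ref{eq:planineq}) and applying the comparison theorem for BSDEs (valid since $\widetilde{\psi}$ is $K_z$-Lipschitz in $z$) yields $w_0(x)\leq G^{\widetilde{\psi},x,u}_{0,t}[\min_{v\in U}\psi(X_t^{x,u},0,v)]$, and the theorem follows by taking the infimum over $u\in\mathcal{U}$, $t\geq 0$, and admissible $\widetilde{\psi}$.

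I expect the main obstacle to be the BSDE stability step: the rescaled drivers $\lambda_n\psi(X^{x,u}_r,\tfrac{1}{\lambda_n}z,u_r)$ converge to $\widetilde{\psi}(z,u_r)$ only uniformly on compact sets of $z$, while the $Z^n_r$ are merely $L^2$-controlled, so one cannot directly evaluate the convergence at $z=Z^n_r$. I plan to handle this via truncation of $Z^n_r$ at level $K$, exploiting the uniform $K_z$-Lipschitz property in $z$ to control the off-truncation error with a bound independent of $n$, and the uniform $\mathcal{H}^2$-bound on $Z^n$ to let the truncation vanish as $K\to\infty$; the uniform decay $\|\lambda_n Y^n\|_\infty\leq\lambda_n M\to 0$ then kills the $y$-contribution. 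This is precisely the point where the estimates from Proposition~\ref{th:2.4} and their rescaled analogue become essential.
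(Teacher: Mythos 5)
Your proposal is correct and follows essentially the same route as the paper: pass to the limit in the rescaled DPP--BSDE via a truncation argument exploiting the uniform $K_z$-Lipschitz property in $z$ and the uniform $\mathcal{H}^2$-bound on the rescaled $Z$-processes, establish $w_0\leq\min_{v\in U}\psi(\cdot,0,v)$ from the viscosity inequality via quadratic penalization, and conclude by BSDE comparison. The only (harmless) deviation is that you content yourself with the one-sided inequality $w_0(x)\leq G^{\widetilde{\psi},x,u}_{0,t}[w_0(X_t^{x,u})]$ for each fixed $u$, whereas the paper proves the convergence uniformly in $u\in\mathcal{U}$ (iterating over small subintervals so the Lipschitz constant of the driver can be absorbed) and thereby obtains the stronger equality $w_0(x)=\inf_{u}G^{\widetilde{\psi},x,u}_{0,t}[w_0(X_t^{x,u})]$ before applying comparison.
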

\begin{proof}
From Propositions \ref{th:3.3.1} and \ref{th:3.2} we get\\
1) The limit $w_0(x)=\lim_{\lambda\rightarrow0^+}\lambda V_\lambda(x)$, for every $x\in\overline{\theta}$; and the convergence is uniform on $\overline{\theta}$.\\
2) There exists $\widetilde{\psi}$\ such that $\lambda_n\psi(x,\frac{1}{\lambda_n}z,u)\to\widetilde{\psi}(z,u)$\ as $\lambda_n\downarrow0$, uniformly on compacts. Moreover,
\begin{equation*}
|\lambda_n\psi(x,\frac{1}{\lambda_n}z,u)| \leq \lambda_nM+K_z|z|,\ n\geq1,\quad |\widetilde{\psi}(z,u)|\leq K_z|z|,\ z\in\mathbb{R}^d.
\end{equation*}
From Proposition \ref{dpp} (DPP) we have
\begin{equation*}
  V_\lambda(x)=\inf_{u\in\mathcal{U}}G_{0,t}^{\lambda,x,u}[V_\lambda(X_t^{x,u})],\ t>0,\ x\in \overline{\theta}.
\end{equation*}
We put $\widetilde{Y}_s^{\lambda,x,u}:=G_{s,t}^{\lambda,x,u}[V_\lambda(X_t^{x,u})],\ s\in[0,t].$\ Then $
  V_\lambda(x)=\inf\limits_{u\in\mathcal{U}}\widetilde{Y}_0^{\lambda,x,u},$
where
\begin{equation*}
  \widetilde{Y}_s^{\lambda,x,u}=V_\lambda(X_t^{x,u})+\int_s^t(\psi(X_r^{x,u},\widetilde{Z}_r^{\lambda,x,u},u_r)
  -\lambda\widetilde{Y}_r^{\lambda,x,u})dr-\int_s^t\widetilde{Z}_r^{\lambda,x,u}dW_r,\ s\in[0,t].
\end{equation*}
By applying It\^{o}'s formula to $e^{-\lambda s}\widetilde{Y}_s^{\lambda,x,u}$ we have
\begin{equation*}
e^{-\lambda s}\lambda\widetilde{Y}_s^{\lambda,x,u}=e^{-\lambda t}\lambda\widetilde{Y}_t^{\lambda,x,u}+\int_s^t\lambda e^{-\lambda r}\psi(X_r^{x,u},\widetilde{Z}_r^{\lambda,x,u},u_r)dr-\int_s^t\lambda e^{-\lambda r}\widetilde{Z}_r^{\lambda,x,u}dW_r.
\end{equation*}
As $e^{-\lambda t}\lambda V_\lambda(X_t^{x,u})\xrightarrow[L^\infty]{} w_0(X_t^{x,u})$, as $\lambda\rightarrow 0$, uniformly with respect to $(t,x),\  u\in\mathcal{U}$, we consider the following BSDE:
\begin{equation}\label{326}
  Y_s^{x,u}=w_0(X_t^{x,u})+\int_s^t\widetilde{\psi}(Z_r^{x,u},u_r)dr-\int_s^tZ_r^{x,u}dW_r,\ s\in[t,T].
\end{equation}
From a standard estimate for BSDEs it follows that, for all $p\in(1,2)$,
\begin{equation*}
\begin{split}
  &\mathbb{E}[\sup\limits_{s\in[0,t]}|e^{-\lambda s}(\lambda\widetilde{Y}_s^{\lambda,x,u})-Y_s^{x,u}|^p+(\int_0^t|e^{-\lambda s}(\lambda\widetilde{Z}_s^{\lambda,x,u})-Z_s^{x,u}|^2ds)^{\frac{p}{2}}]\\
  \leq& C_p\mathbb{E}[|e^{-\lambda t}(\lambda\widetilde{Y}_t^{\lambda,x,u})-Y_t^{x,u}|^p]+C_p\mathbb{E}[(\int_0^t|e^{-\lambda r}\lambda\psi(X_r^{x,u},\widetilde{Z}_r^{\lambda,x,u},u_r)-\widetilde{\psi}(Z_r^{x,u},u_r)|dr)^p]\\
  \leq& C_p\mathbb{E}[|e^{-\lambda t}(\lambda\widetilde{Y}_t^{\lambda,x,u})-Y_t^{x,u}|^p]\\
  &+C_p\mathbb{E}[(\int_0^t\lambda|\psi(X_r^{x,u},\widetilde{Z}_r^{\lambda,x,u},u_r)-\psi(X_r^{x,u},\frac{1}{\lambda}{Z}_r^{x,u},u_r)|dr)^p] (=:I_1(\lambda))\\
  &+C_p\mathbb{E}[(\int_0^t|e^{-\lambda r}(\lambda\psi(X_r^{x,u},\frac{1}{\lambda}{Z}_r^{x,u},u_r))-\widetilde{\psi}(Z_r^{x,u},u_r)|1_{\{|Z_r^{x,u}|\leq\alpha\}}dr)^p] (=:\rho_\alpha(\lambda))\\
  &+C_p\mathbb{E}[(\int_0^t(\lambda M+2K_z|Z_r^{x,u}|)1_{\{|Z_r^{x,u}|>\alpha\}}dr)^p] (=:I_2(\lambda,\alpha)).
  \end{split}
\end{equation*}
Notice that $\displaystyle I_2(\lambda,\alpha)\leq C_{p,M}\lambda^p+C_{p,K_z}\mathbb{E}[\int_0^t\frac{|Z_r^{x,u}|^2}{\alpha^{2-p}}dr]$.
As $w_0\in C_b(\overline{\theta})$ and $|\widetilde{\psi}(z,u)|\leq K_z|z|,\ (z,u)\in\mathbb{R}^d\times U$, it follows from the BSDE (\ref{326}) for $(Y^{x,u},Z^{x,u})$ that
\begin{equation*}
  \sup\limits_{(x,u)\in\overline{\theta}\times\mathcal{U}}\mathbb{E}[\int_0^t|Z_r^{x,u}|^2dr]<\infty.
\end{equation*}
We also remark that, again by a BSDE standard estimates, there is some $K\in\mathbb{R}_+$ such that
\begin{equation*}
  \lambda^2\mathbb{E}[\int_0^t|\widetilde{Z}_r^{\lambda,x,u}|^2dr]\leq K,\ \mbox{for\ all}\ \lambda>0.
\end{equation*}
Then,
\begin{equation*}
  I_2(\lambda,\alpha)\leq C_{p,M}\lambda^p+C'_{p,K_z}\frac{1}{\alpha^{2-p}}.
\end{equation*}
For $I_1(\lambda)$ we have
\begin{equation*}
\begin{split}
  I_1(\lambda)\leq& C'_p\mathbb{E}[(\int_0^t|\lambda\widetilde{Z}_r^{\lambda,x,u}-Z_r^{x,u}|dr)^p]\\
  \leq&C''_pt^{\frac{p}{2}}\mathbb{E}[(\int_0^t|e^{-\lambda r}(\lambda\widetilde{Z}_r^{\lambda,x,u})-Z_r^{x,u}|^2dr)^{\frac{p}{2}}]+C'''_p\mathbb{E}[(\int_0^t(1-e^{-\lambda r})^2|\lambda\widetilde{Z}_r^{\lambda,x,u}|^2dr)^{\frac{p}{2}}]\\
  \leq& C''_pt^{\frac{p}{2}}\mathbb{E}[(\int_0^t|e^{-\lambda r}(\lambda\widetilde{Z}_r^{\lambda,x,u})-Z_r^{x,u}|^2dr)^{\frac{p}{2}}]+C'''_p(1-e^{-\lambda t})^pK.
  \end{split}
\end{equation*}
Hence, for $t>0$ small enough such that $C''_pt^{\frac{p}{2}}\leq\frac{1}{2}$, we get
\begin{equation*}
\begin{split}
  &\mathbb{E}[\sup\limits_{s\in[0,t]}|e^{-\lambda s}(\lambda\widetilde{Y}_s^{\lambda,x,u})-Y_s^{x,u}|^p+\frac{1}{2}(\int_0^t|e^{-\lambda s}(\lambda\widetilde{Z}_s^{\lambda,x,u})-Z_s^{x,u}|^2ds)^{\frac{p}{2}}]\\
  \leq& C\rho_\alpha(\lambda)+\frac{C}{\alpha^{2-p}},\  \mbox{for any}\ (x,u)\in\overline{\theta}\times\mathcal{U},\
   \mbox{and any}\ \alpha>0.
\end{split}
\end{equation*}
Observe that $\rho_\alpha(\lambda)\xrightarrow[\lambda=\lambda_n\downarrow0]{}0$, $\frac{C}{\alpha^{2-p}}\xrightarrow[\alpha\uparrow\infty]{}0$.
Hence,
\begin{equation*}
  \mathbb{E}[\sup\limits_{s\in[0,t]}|e^{-\lambda s}(\lambda\widetilde{Y}_s^{\lambda,x,u})-Y_s^{x,u}|^p+\frac{1}{2}(\int_0^t|e^{-\lambda s}(\lambda\widetilde{Z}_s^{\lambda,x,u})-Z_s^{x,u}|^2ds)^{\frac{p}{2}}]\xrightarrow[\lambda=\lambda_n\downarrow0]{}0,
\end{equation*}
 uniformly in $(x,u)\in\overline{\theta}\times\mathcal{U}$, for $t>0$ small enough; otherwise, for $\delta>0$ small enough, by making above discussion first on $[t-\delta,t]$, after on $[t-2\delta,t-\delta]$, etc., we get by iteration
\begin{equation*}
  \sup\limits_{u\in\mathcal{U}}|\lambda\widetilde{Y}_0^{\lambda,x,u}-Y_0^{x,u}|\xrightarrow[\lambda=\lambda_n\downarrow0]{}0,
\end{equation*}
and, consequently,
\begin{equation*}
  |\inf\limits_{u\in\mathcal{U}}(\lambda\widetilde{Y}_0^{\lambda,x,u})-\inf\limits_{u\in\mathcal{U}}Y_0^{x,u}|\xrightarrow[\lambda=\lambda_n\downarrow0]{}0.
\end{equation*}
But this means that
\begin{equation*}
  \inf\limits_{u\in\mathcal{U}}Y_0^{x,u}=w_0(x).
\end{equation*}
Notice that from BSDE (\ref{326}) we have
\begin{equation*}
  Y_0^{x,u}=w_0(X_t^{x,u})+\int_0^t\widetilde{\psi}(Z_s^{x,u},u_s)ds-\int_0^tZ_s^{x,u}dW_s.
\end{equation*}
On the other hand, defining the backward stochastic semigroup
\begin{equation*}
  G_{s,t}^{\widetilde{\psi},x,u}(\eta):=Y_s^{x,u,\eta}
\end{equation*}
through the associated BSDE
\begin{equation*}
  Y_s^{x,u,\eta}=\eta+\int_s^t\widetilde{\psi}(Z_r^{x,u,\eta},u_r)dr-\int_s^tZ_r^{x,u,\eta}dW_r,\ \eta\in L^2(\Omega,\mathcal{F}_t,\mathbb{P}),
\end{equation*}
we get
\begin{equation*}
  w_0(x)=\inf\limits_{u\in\mathcal{U}}G_{0,t}^{\widetilde{\psi},x,u}[w_0(X_t^{x,u})].
\end{equation*}
Consequently,
\begin{equation}\label{426}
  \begin{aligned}
w_0(x)= &\inf\{G_{0,t}^{\widetilde{\psi},x,u}[w_0(X_t^{x,u})]\mid u\in\mathcal{U},\ t\geq0,\ \widetilde{\psi}\ \mbox{such that there exists}\ \lambda_n\downarrow0\ \mbox{with}\\
&\ \  \ \ \lambda_n\psi(x,\frac{1}{\lambda_n}z,u)\rightarrow\widetilde{\psi}(z,u)\},\ x\in\overline{\theta}.
\end{aligned}
\end{equation}
From Lemma \ref{l:3.4} we have $H(x,p,A)\geq H(x,0,0),\ \mbox{for any}\ (p,A)\in \mathbb{R}^N\times\mathcal{S}^N, x\in\overline{\theta}.$

Therefore, from Proposition \ref{th:3.3.1}, in viscosity sense
\begin{equation*}
\begin{split}
  0\geq&\lambda V_\lambda(x)+H(x,DV_\lambda(x),D^2V_\lambda(x))\\
  \geq& \lambda V_\lambda(x)+H(x,0,0)=\lambda V_\lambda(x)+\max\limits_{u\in {U}}\{-\psi(x,0,u)\},
\end{split}
\end{equation*}
for all $x\in\theta$ with $J^{2,+}V_\lambda(x)\neq0$. Using the same argument as in the proof of Step 2 of Theorem \ref{th:4.1}, we see that this implies that $0\geq\lambda V_\lambda(x)+\max\limits_{u\in U}\{-\psi(x,0,u)\}$, for all $x\in\overline{\theta}$. By taking the limit, as $\lambda\rightarrow 0$, it follows that
\begin{equation*}
  w_0(x)\leq \min\limits_{u\in {U}}\psi(x,0,u).
\end{equation*}
Therefore, from (\ref{426}) and the comparison theorem for BSDEs we get directly
\begin{equation*}
  \begin{aligned}
w_0(x)\leq &\inf\{G_{0,t}^{\widetilde{\psi},x,u}[\min\limits_{v\in U}\psi(X_t^{x,u},0,v)]\mid u\in\mathcal{U},\ t\geq0,\ \widetilde{\psi}\ \mbox{such that there exists}\ \lambda_n\downarrow0\ \mbox{with}\\
&\ \  \ \ \lambda_n\psi(x,\frac{1}{\lambda_n}z,u)\rightarrow\widetilde{\psi}(z,u)\},\ x\in\overline{\theta}.
\end{aligned}
\end{equation*}
\end{proof}
\section{ {\protect \large Appendix: Proof of Proposition \ref{dpp} (DPP)}}
This appendix is devoted to the proof of the DPP (Proposition \ref{dpp}). For the proof we need an auxiliary result. For this we note that, as the filtration used in Section 4 is the Brownian one, we can suppose without loss of generality that $(\Omega,\mathcal{F},\mathbb{P})$ is the standard Wiener space, $\Omega=C_0(\mathbb{R}_+;\mathbb{R}^d)=\{\omega\in C(\mathbb{R}_+;\mathbb{R}^d): \omega(0)=0\}$ , endowed with Borel $\sigma$-algebra over $C_0(\mathbb{R}_+;\mathbb{R}^d)$ and the Wiener measure, with respect to which $\mathbb{F}$ is completed. The coordinate process $W_t(\omega)=\omega_t, t\geq0, \omega\in\Omega$, is a d-dimensional Brownian motion, and the filtration $\mathbb{F}$ is generated by $W$.
\begin{lemma} We assume that (H1) and (H2) hold. Let $t\geq0,\ u\in\mathcal{U}_t=L_{\mathbb{F}}^\infty(t,\infty;U)$. Let $X^{t,x,u}$ be the unique continuous and  $\mathbb{F}$-adapted solution of the following SDE:
\begin{equation*}\label{A1}
 X_s^{t,x,u}=x+\int_t^sb(X_r^{t,x,u},u_r)dr +\int_t^s\sigma(X_r^{t,x,u},u_r)dW_r,\ s\geq t,\ x\in\mathbb{R}^N,  \tag{A1}
\end{equation*}
and let $(Y^{\lambda,t,x,u}, Z^{\lambda,t,x,u})$ be the unique solution of the following BSDE on the infinite time inteval:
\begin{equation*}\label{A2}
  Y_s^{\lambda,t,x,u}=Y_T^{\lambda,t,x,u}+\int_s^T(\psi(X_r^{t,x,u},Z_r^{\lambda,t,x,u},u_r)-\lambda Y_r^{\lambda,t,x,u})dr-\int_s^TZ_r^{\lambda,t,x,u}dW_r,\ t\leq s\leq T<+\infty,  \tag{A2}
\end{equation*}
where $Y^{\lambda,t,x,u}=(Y_s^{\lambda,t,x,u})_{s\geq t}$ is a bounded continuous $\mathbb{F}$-adapted process and $Z^{\lambda,t,x,u}=(Z_s^{\lambda,t,x,u})_{s\geq t}\in\mathcal{H}^2_{loc}(t,\infty;\mathbb{R}^d)$.

Let $\theta_t=\theta_t(\omega)$ be the translation operator on $\Omega$, $\theta_t(\omega)_s=\omega(s+t)-\omega(t),\ \omega\in\Omega, s\geq t$. Given $u\in\mathcal{U}$ we can identify $u$ with a measurable functional applying to $W$. Thus, given an arbitrary element $u_0$ of $\mathcal{U}$, we can define
\begin{equation*}
\overline{u}_s:=
\left\{
\begin{array}{lll}
u_0,\ s\in[0,t),\\
u_{s-t}(\theta_t),\ s\geq t.
\end{array}
\right.
\end{equation*}
Then, $\overline{u}\in\mathcal{U}$ and
\begin{equation*}\label{A3}
  X_s^{x,u}(\theta_t)=X_{s+t}^{t,x,\overline{u}},\ Y_s^{\lambda,x,u}(\theta_t)=Y_{s+t}^{\lambda,t,x,\overline{u}},\ s\geq0,\ \mathbb{P}\text{-a.s.}, \tag{A3}
\end{equation*}
and
\begin{equation*}\label{A4}
  Z_s^{\lambda,x,u}(\theta_t)=Z_{s+t}^{\lambda,t,x,\overline{u}},\ dsd\mathbb{P}\text{-a.e.}, \ s\geq 0. \tag{A4}
\end{equation*}
\end{lemma}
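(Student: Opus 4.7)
The plan is to reduce the lemma to the uniqueness statements for SDE (A1) and for the infinite-horizon BSDE (A2) (Proposition \ref{th:2.4}) by transporting the defining equations of $X^{x,u}$ and $(Y^{\lambda,x,u},Z^{\lambda,x,u})$ through the shift $\theta_t$. The starting observation is that $\widetilde{W}_{s}:=W_{s}\circ\theta_{t}=W_{s+t}-W_{t}$, $s\geq 0$, is again a standard $d$-dimensional Brownian motion on $(\Omega,\mathcal{F},\mathbb{P})$, independent of $\mathcal{F}_{t}$, and that for any $\mathbb{F}$-adapted $U$-valued process $u$, the process $(u_{s}\circ\theta_{t})_{s\geq 0}$ is adapted to the filtration of $\widetilde{W}$. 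Consequently $\overline{u}_{s}=u_{s-t}(\theta_{t})$ is $\mathcal{F}_{s}$-adapted for $s\geq t$, so $\overline{u}\in\mathcal{U}$ indeed.

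First I would treat the SDE identity (\ref{A3}). Applying $\theta_{t}$ to the integral form of the state equation yields
\begin{equation*}
X_{s}^{x,u}(\theta_{t})=x+\int_{0}^{s}b(X_{r}^{x,u}(\theta_{t}),u_{r}(\theta_{t}))\,dr+\Bigl(\int_{0}^{s}\sigma(X_{r}^{x,u},u_{r})\,dW_{r}\Bigr)\circ\theta_{t}.
\end{equation*}
The only non-routine point is the identity
\begin{equation*}
\Bigl(\int_{0}^{s}\sigma(X_{r}^{x,u},u_{r})\,dW_{r}\Bigr)\circ\theta_{t}=\int_{0}^{s}\sigma(X_{r}^{x,u}(\theta_{t}),u_{r}(\theta_{t}))\,d\widetilde{W}_{r},
\end{equation*}
which I would establish first for elementary step integrands $\sigma(X_{r},u_{r})$, where the claim is literally the definition applied to each increment $W_{r_{i+1}}-W_{r_{i}}$, and then pass to $L^{2}$-limits; this is the standard transfer of It\^o integrals under the shift. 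After the change of variables $r'=r+t$ and $s'=s+t$, the process $\widetilde{X}_{s'}:=X_{s'-t}^{x,u}(\theta_{t})$ is seen to satisfy SDE (\ref{A1}) driven by $W$ with control $\overline{u}$ and initial condition $x$ at time $t$. Uniqueness for (\ref{A1}) under (H1) then gives $\widetilde{X}_{s'}=X_{s'}^{t,x,\overline{u}}$, which is (\ref{A3}).

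For the BSDE identities (\ref{A3}) and (\ref{A4}), I would compose the forward writing of BSDE (\ref{A2}) with $\theta_{t}$, use the same stochastic-integral transfer rule, and apply the already-proved forward identity to rewrite $X_{r}^{x,u}(\theta_{t})=X_{r+t}^{t,x,\overline{u}}$. Setting $\widetilde{Y}_{s'}:=Y_{s'-t}^{\lambda,x,u}(\theta_{t})$ and $\widetilde{Z}_{s'}:=Z_{s'-t}^{\lambda,x,u}(\theta_{t})$ and performing the substitution $r'=r+t$, $T'=T+t$, the pair $(\widetilde{Y},\widetilde{Z})$ is seen to satisfy, on every $[t,T']\subset[t,\infty)$,
\begin{equation*}
\widetilde{Y}_{s'}=\widetilde{Y}_{T'}+\int_{s'}^{T'}\!\!\bigl(\psi(X_{r'}^{t,x,\overline{u}},\widetilde{Z}_{r'},\overline{u}_{r'})-\lambda\widetilde{Y}_{r'}\bigr)\,dr'-\int_{s'}^{T'}\widetilde{Z}_{r'}\,dW_{r'}.
\end{equation*}
Because $|Y^{\lambda,x,u}|\leq M/\lambda$ is a pointwise bound and $Z^{\lambda,x,u}\in\mathcal{H}^{2}_{loc}(\mathbb{R}^{d})$ transfers under $\theta_{t}$ to $\widetilde{Z}\in\mathcal{H}^{2}_{loc}(t,\infty;\mathbb{R}^{d})$ (with the same bound for $\widetilde{Y}$), the pair $(\widetilde{Y},\widetilde{Z})$ lies in the uniqueness class of Proposition \ref{th:2.4}. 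The uniqueness part of that proposition then forces $\widetilde{Y}_{s'}=Y_{s'}^{\lambda,t,x,\overline{u}}$ and $\widetilde{Z}_{s'}=Z_{s'}^{\lambda,t,x,\overline{u}}$, $ds'd\mathbb{P}$-a.e., yielding (\ref{A3}) for $Y$ and (\ref{A4}) for $Z$.

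The step I expect to be most delicate is the transfer of the It\^o integral under $\theta_{t}$, in particular the verification that the stochastic integral against $\widetilde{W}$ is well defined on the same probability space and really coincides $\mathbb{P}$-a.s. with the $\theta_{t}$-composition of the original integral. Once this is settled via the elementary-integrand approximation and a passage to the limit in $L^{2}(\mathbb{P})$, the remainder of the argument is mechanical: a change of variables and an appeal to pathwise uniqueness for (\ref{A1}) and to the uniqueness statement in Proposition \ref{th:2.4}.
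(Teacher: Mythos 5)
Your proposal is correct and follows essentially the same route as the paper: apply the shift $\theta_t$ to the defining equations, observe that the shifted processes solve (A1) and (A2) with control $\overline{u}$, and conclude by the uniqueness of the solutions of (A1) and (A2). The paper states this tersely and takes the transfer of the It\^o integral under $\theta_t$ for granted; your write-up simply fills in that standard detail.
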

\begin{proof}
While the existence and the uniqueness of the solution for (\ref{A1}) is standard, that of (\ref{A2}) is shown in analogy to Proposition \ref{th:2.4}.

Given $u\in\mathcal{U}$, it is obvious that also $\overline{u}\in\mathcal{U}$, and applying the transformation $\theta_t$ to (\ref{0}) and (\ref{r40}) we see that $(X_{s-t}^{x,u}(\theta_t))_{s\geq t}$, $(Y_{s-t}^{\lambda,x,u}(\theta_t),Z_{s-t}^{\lambda,x,u}(\theta_t))_{s\geq t}$ are solution of (\ref{A1}) and (\ref{A2}) respectively, with control process $\overline{u}$ instead of $u$. From the uniqueness of the solutions of (\ref{A1}) and (\ref{A2}) we obtain (\ref{A3}) and (\ref{A4}).
\end{proof}
Now we can prove Proposition \ref{dpp} (DPP).
\begin{proof}\textbf{(of Proposition \ref{dpp}.)} Let us put $\overline{V}_\lambda(x):=\inf\limits_{u\in\mathcal{U}}G_{0,t}^{\lambda,x,u}[V_\lambda(X_t^{x,u})]$.
We have to show that $\overline{V}_\lambda(x)=V_\lambda(x)$.\\
1) As, for all $y\in\mathbb{R}^d$, ${V}_\lambda(y)$ is deterministic, we obtain from the preceding Lemma 5.1
\begin{equation*}
  V_\lambda(y)=\essinf\limits_{v\in\mathcal{U}}Y_0^{\lambda,y,v}(\theta_t)=\essinf\limits_{v\in\mathcal{U}}Y_t^{\lambda,t,y,\overline{v}},\  \mathbb{P}\text{-a.s.},
\end{equation*}
with
\begin{equation*}
\overline{v}_s:=
\left\{
\begin{array}{lll}
u_0,\ s\in[0,t),\\
v_{s-t}(\theta_t),\ s\geq t.
\end{array}
\right.
\end{equation*}
Then, by a standard argument (see, e.g., \cite{Peng 1997}),
\begin{equation*}
  V_\lambda(X_t^{x,u})=\essinf\limits_{v\in\mathcal{U}}Y_t^{\lambda,t,X_t^{x,u},\overline{v}}=\essinf\limits_{v\in\mathcal{U}}Y_t^{\lambda,x,u\oplus\overline{v}},
\end{equation*}
where
\begin{equation*}
(u\oplus\overline{v})_s=
\left\{
\begin{array}{lll}
u_s,\ s\in[0,t)\\
\overline{v}_s,\ s\geq t
\end{array}
\right.
\in\mathcal{U}.
\end{equation*}
Again from an argument by now standard (see, e.g., \cite{Peng 1997}), for all $\varepsilon>0$, there exists $v\in\mathcal{U}$ such that $u=v$, dsd$\mathbb{P}$-a.e. on $[0,t]\times\Omega$ and
\begin{equation*}
 V_\lambda(X_t^{x,u})\geq Y_t^{\lambda,x,v}-\varepsilon,\ \mathbb{P}\text{-a.s.}
\end{equation*}
Then, from the monotonicity and the Lipschitz property (in $L^2$) of $G_{0,t}^{\lambda,x,u}[\cdot]$ (resulting from BSDE standard estimates, see, e.g., \cite{Peng 1997}),
\begin{equation*}
  \begin{split}
  G_{0,t}^{\lambda,x,u}[V_\lambda(X_t^{x,u})]\geq& G_{0,t}^{\lambda,x,u}[Y_t^{\lambda,x,v}-\varepsilon]
  \geq G_{0,t}^{\lambda,x,u}[Y_t^{\lambda,x,v}]-C\varepsilon
  =Y_0^{\lambda,x,v}-C\varepsilon\\
  \geq&\inf\limits_{v\in\mathcal{U}}Y_0^{\lambda,x,v}-C\varepsilon=V_\lambda(x)-C\varepsilon,\ \mathbb{P}\text{-a.s.}
  \end{split}
\end{equation*}
Consequently, letting $\varepsilon\downarrow0$, we see that
\begin{equation*}
  \overline{V}_\lambda(x)=\inf\limits_{u\in\mathcal{U}}G_{0,t}^{\lambda,x,u}[V_\lambda(X_t^{x,u})]\geq V_\lambda(x).
\end{equation*}
2) To prove that $V_\lambda(x)\geq\overline{V}_\lambda(x)$, we let, for any given $\varepsilon>0$, $u\in\mathcal{U}$ be such that $V_\lambda(x)\geq Y_0^{\lambda,x,u}-\varepsilon$. Then,
\begin{equation*}
  \begin{split}
  V_\lambda(x)\geq& Y_0^{\lambda,x,u}-\varepsilon=G_{0,t}^{\lambda,x,u}[Y_t^{\lambda,x,u}]-\varepsilon\\
  \geq&G_{0,t}^{\lambda,x,u}[\essinf\limits_{\overline{v}\in\mathcal{U}}Y_t^{\lambda,x,u\oplus\overline{v}}]-\varepsilon\\
  =&G_{0,t}^{\lambda,x,u}[\essinf\limits_{\overline{v}\in\mathcal{U}}Y_t^{\lambda,t,X_t^{x,u},\overline{v}}]-\varepsilon,\ \mathbb{P}\text{-a.s.}
  \end{split}
\end{equation*}
But, $Y_t^{\lambda,t,X_t^{x,u},\overline{v}}=(Y_0^{\lambda,y,v})(\theta_t)|_{y=X_t^{x,u}}$, and thus
\begin{equation*}
  \essinf\limits_{\overline{v}\in\mathcal{U}}Y_t^{\lambda,t,X_t^{x,u},\overline{v}}=(\inf\limits_{\overline{v}\in\mathcal{U}}Y_0^{\lambda,y,\overline{v}})(\theta_t)|_{y=X_t^{x,u}}=V_\lambda(X_t^{x,u}).
\end{equation*}
Consequently,
\begin{equation*}
  V_\lambda(x)\geq G_{0,t}^{\lambda,x,u}[V_\lambda(X_t^{x,u})]-\varepsilon,\ \text{for\ all}\ u\in\mathcal{U},
\end{equation*}
from where it follows that
\begin{equation*}
  V_\lambda(x)\geq \inf\limits_{u\in\mathcal{U}}G_{0,t}^{\lambda,x,u}[V_\lambda(X_t^{x,u})]-\varepsilon,
\end{equation*}
and letting $\varepsilon\downarrow0$ we get $V_\lambda(x)\geq\overline{V}_\lambda(x)$. \end{proof}


 \end{document}